\documentclass[12pt]{article}
\usepackage[utf8]{inputenc}
\usepackage{geometry}
\usepackage[colorlinks]{hyperref}

\usepackage{graphicx}

\usepackage[bb=boondox]{mathalfa}  

\usepackage{amssymb,amsmath,amsthm}
\usepackage{mathtools}
\usepackage{mathrsfs}  
\usepackage{fancyhdr}
\usepackage{enumitem}
\usepackage{bbm}
\usepackage{dsfont}

\newcommand{\N}{\mathbb{N}}
\newcommand{\Z}{\mathbb{Z}}
\newcommand{\R}{\mathbb{R}}

\newcommand{\E}{\mathbb{E}}

\newcommand{\Prob}{\mathbb{P}}

\numberwithin{equation}{section}

\newcommand{\1}{\mathbbm{1}}

\newtheorem{theorem}{Theorem}[section] 
\newtheorem{corollary}[theorem]{Corollary}
\newtheorem{proposition}[theorem]{Proposition}
\newtheorem{lemma}[theorem]{Lemma}

\theoremstyle{definition}

\theoremstyle{remark}
\newtheorem{remark}[theorem]{Remark}

\usepackage{xcolor}
\usepackage[normalem]{ulem}

\newcommand{\Sep}{\textnormal{Sep}}
\newcommand{\dist}{\textnormal{dist}}
\newcommand{\taubreak}{\tau_{\textnormal{break}}}

\title{The broadcast voter model: Stationary measures}
 
\author{Jhon Astoquillca \thanks{Institute of Mathematics, Statistics and Computer Science, University of S\~ao Paulo} \and Adrián González Casanova \thanks{School of Mathematical and Statistical Sciences, Arizona State University} \and Renato S. dos Santos \thanks{Department of Mathematics, Federal University of Minas Gerais}} 
\date{\today}

\begin{document}

\maketitle

\begin{abstract} 
The classical voter model is dual to coalescing random walks, in much the same way that the Kingman coalescent describes genealogies through binary mergers. Motivated by the passage from the Kingman to the~\(\Lambda\)-coalescent, where multiple mergers are allowed, we introduce the~\(\Lambda\)-broadcast voter model, in which a voter may transmit its opinion simultaneously to several neighbors according to a mechanism governed by a finite measure~\(\Lambda\) on~\([0,1]\); the classical voter model is recovered when~\(\Lambda=\delta_0\). We show that our model is well-defined on general graphs with bounded degrees, and we construct a family of stationary measures~\({\mu_{\alpha,\Lambda}}\), parametrized by bounded harmonic functions, that characterize the extremal stationary measures according to the finite or infinite collision property of the underlying random walk. We then study the dependence of the stationary measures on the broadcasting measure $\Lambda$ through a finite number of its moments. We give general conditions under which different moment profiles yield different stationary measures and obtain, in particular, a complete characterization on trees with bounded degrees and on nearest-neighbour~\(\mathbb Z^d\),~\(d\ge3\). We also prove continuity of the stationary measures with respect to the moment profile. Our main tool is a dual system of coordinated coalescing random walks, whose individual particles are simple random walks but which may jump and coalesce simultaneously; a coupling with independent random walks allows us to relate their long-time behaviour to ordinary random walk collision properties.

\end{abstract}



\section{Introduction}

The voter model is one of the most basic interacting particle systems for the spread of
opinions, and one that manages to combine an almost minimal description with a
remarkably rich behaviour. Each vertex of a graph carries one of two opinions, and the
dynamics consists of local copying events: at rate one along each edge, a vertex adopts
the opinion of a neighbor. Simple as this rule is, it has been the source of deep
questions in probability theory, largely because of the model's duality with coalescing
random walks. Tracing an opinion backward in time produces a random walk, and ancestral
lines coalesce whenever they meet. Through this duality, the collision and recurrence
properties of random walks on the underlying graph become the main tool for describing
the long-time behaviour of the process and, in particular, the structure of its
stationary measures. See, for instance, \cite{Liggett85, Swart2026}.

\begin{figure*}
    \centering
\includegraphics[width=15cm]{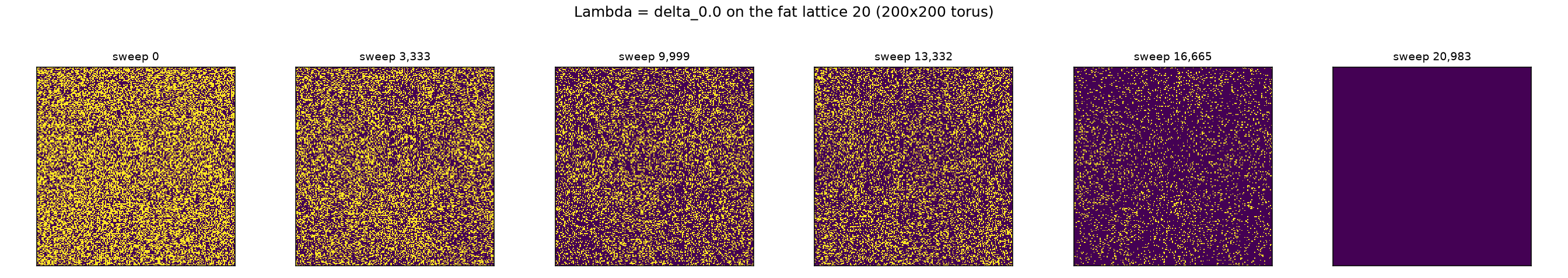}
\includegraphics[width=15cm]{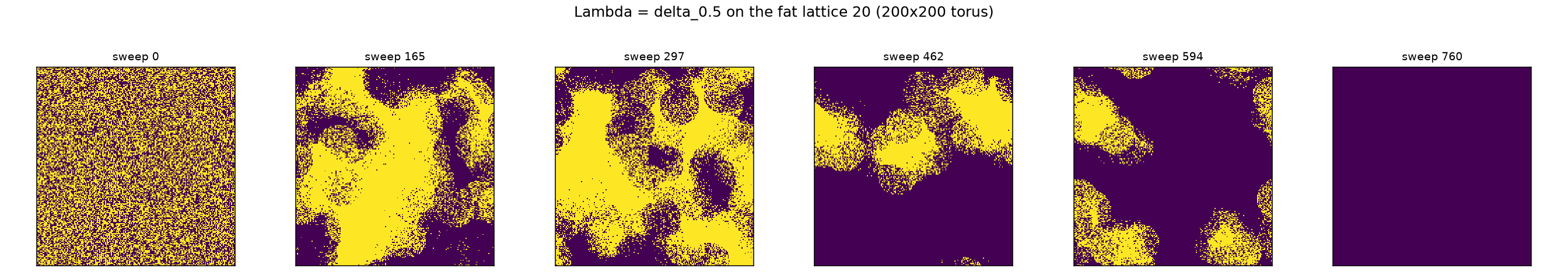}
\includegraphics[width=15cm]{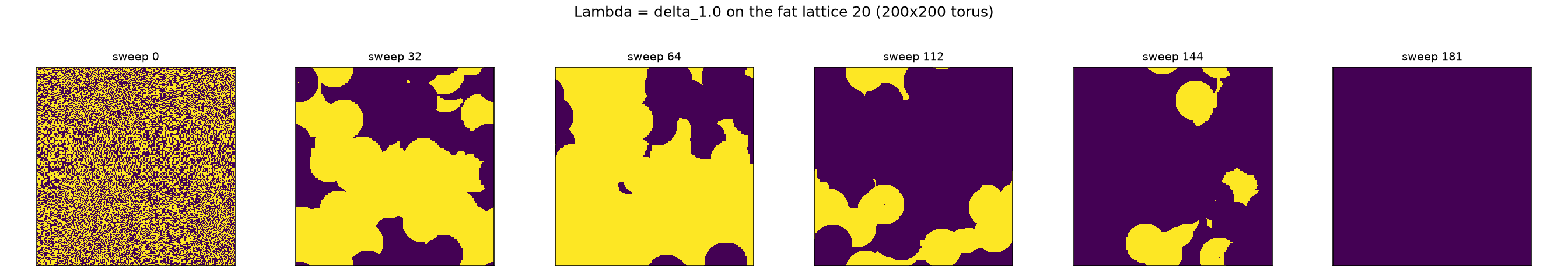}
\caption{Realizations of the classical voter model ($\Lambda=\delta_0$, top row), the
    $\Lambda=\delta_{1/2}$ broadcast voter model (middle row), and the $\Lambda=\delta_1$
    broadcast voter model (bottom row) on the \emph{$20$-fat torus of size $200$}: the graph
    obtained from the discrete torus $(\mathbb{Z}/200\mathbb{Z})^2$ by adding an edge between
    every pair of vertices at Euclidean distance less than $20$. All three simulations start
    from the same i.i.d.\ Bernoulli$(1/2)$ configuration.
    \label{fig:fat-lattice-realizations}} 
    \label{Model}
\end{figure*}

On a seemingly unrelated front, the Kingman coalescent and its generalizations occupy a
central place in the study of genealogies. While the Kingman coalescent only allows two
ancestral lines to merge at a time, Pitman and Sagitov independently observed that this
restriction can be relaxed: a finite measure $\Lambda$ on $[0,1]$ may be used to encode
simultaneous mergers of several lines at once, through a paintbox construction in which a
parameter $q$ is sampled in a certain way depending on $\Lambda$ and each line
present is included in the merger independently with probability $q$. The resulting
family of $\Lambda$-coalescents recovers the Kingman coalescent when $\Lambda=\delta_0$
and otherwise allows for multiple, simultaneous collisions; we refer the reader to
\cite{Pitman99,Berestycki09} for background on this theory.

This paper introduces and studies a variant of the voter model that combines these two
ingredients. Informally, we allow a voter to broadcast its opinion loudly to several of
its neighbors at once, instead of convincing only a single one of them at a time. 
A broadcast event at a
vertex $x$ is governed by a parameter $q\in[0,1]$, 
sampled in a certain way depending on a fixed finite \emph{broadcast measure} $\Lambda$ on $[0,1]$; 
when this happens, each neighbor of $x$ independently adopts the opinion of $x$ with probability $q$. 
We call the resulting process the \emph{$\Lambda$-broadcast voter model} 
(formally defined in Section~\ref{s:model_results} through its Markov pre-generator, and constructed graphically in Section~\ref{ss_alternative_description}). 
Taking $\Lambda=\delta_0$ forces exactly one neighbor to be affected at a time, and one recovers
the classical voter model. In this sense, the relationship between the voter and
the broadcast voter models mirrors that between the Kingman and the
$\Lambda$-coalescents: the measure $\Lambda$ simultaneously governs the rate at which
opinions are broadcast and the degree of coordination among the neighbors that are
convinced.

Related mechanisms involving simultaneous reproduction events have been studied in spatial population models. In particular, Barton, Etheridge and Véber~\cite{BartonEtheridgeVeber2010} introduced the spatial~\(\Lambda\)-Fleming–Viot process, in which a reproduction event replaces a randomly selected fraction of the population in a spatial region with offspring of a common parent. Saadi~\cite{Saadi2014} subsequently studied a closely related version in which the parent is selected at the center of the affected region. The latter mechanism is particularly close to our model, since the broadcasting vertex is also distinguished in advance. Our setting, however, concerns binary opinions on a graph, with a general finite measure~\(\Lambda\) governing the broadcast events.

Our main object of study in this work is the set of stationary measures of the $\Lambda$-broadcast
voter model. This continues a program the first author has pursued for other variants of
the voter model, most closely in \cite{Astoquillca26}, where the stationary measures of two
different generalizations of the classical dynamics are characterized through the same
duality philosophy. The present paper treats a third variant, characterized by broadcast events that allow several neighboring vertices to adopt the opinion of a common source simultaneously.

The main new feature of the broadcast voter model is the structure of its dual. Because
several neighbors of a broadcasting vertex may be affected simultaneously, the natural
dual is no longer a system of ordinary coalescing random walks, but a system of
\emph{coordinated coalescing random walks}: particles that occupy neighboring positions
of a common vertex may jump to it simultaneously and coalesce there, while, marginally,
each particle still evolves as a continuous-time simple random walk. 
We establish a moment duality between the broadcast voter model and this coordinated system 
(Lemma~\ref{lemma:duality}), and show that, despite the interaction it introduces, the coordinated system
retains two properties familiar from the classical coalescing picture: its single-particle
marginals are simple random walks, and the family is projectively consistent, in the
sense that the law of any subcollection of particles agrees with that of the
corresponding lower-dimensional system (Lemma~\ref{lemma:CCRW_properties}). This last property is reminiscent of
the sampling consistency of $\Lambda$-coalescents, and more generally of the
correspondence between exchangeable sequences of processes and coordinated processes
described in \cite{GonKurtNunezPeres2025,GonKurtTobias2021}: removing particles from
the dual system does not alter the way the remaining ones respond to a broadcast event,
exactly as removing lines from an exchangeable family does not alter the law of the
lines that remain.

Spatial extensions of~\(\Lambda\)-coalescents have also been investigated by Limic and Sturm~\cite{LimicSturm2006}. In their model, ancestral lines migrate through a geographical space and can merge only when occupying the same site. By contrast, our coordinated random walks may jump simultaneously from distinct neighboring vertices to a common vertex, where they coalesce. This distinction highlights the particular spatial interaction induced by the broadcast mechanism.

\subsection{Organization of the paper and proof strategies}
Our main results can be described as follows. First, we show in Theorem~\ref{thm:existence} that our model is well-defined on general graphs with bounded degrees. 
Then in Theorem~\ref{thm:classification_general} we construct and characterize all stationary measures of the model in terms of collision properties of the underlying simple random walks, obtaining in Corollary~\ref{c:long_lattice} a particularly simple description in the case of $\Z^d$ with $R$-range edges, $R \geq 1$. Turning our attention to the question of how the stationary measures depend on the broadcast measure, we discover that they depend on $\Lambda$ solely through a certain vector of normalized moments (Lemma~\ref{lemm:m_imply_mu}) that completely characterize the dynamics up to a linear time-change.
Additionally, in the finite-collision regime, we obtain criteria to show that distinct moment profiles imply distinct stationary measures, allowing a complete characterization in two different classes of graphs: trees with bounded degrees and $\Z^d$, $d \geq 3$ (cf.~Theorems~\ref{thm:distinctness_trees} and \ref{thm:distinctness_Zd}).
Finally, we show in Theorem~\ref{thm:continuity} that the equilibrium measures are continuous with respect to the moment profiles, and thus in particular with respect to the (normalized) broadcast measure $\Lambda$. The proof of all of these theorems hinge on a coupling we construct between systems of independent random walks and the system of coordinated coalescing random walks, allowing to reduce via duality many arguments to computations involving a single or a pair of independent random walks.

The rest of the paper is organized as follows.
In Section~\ref{s:model_results}, 
we define the $\Lambda$-broadcast voter model via its Markov pregenerator 
and state our assumptions and main results in detail. 
In Section~\ref{s:TheBVModel}, we prove Theorem~\ref{thm:existence} concerning existence of the dynamics,
provide a graphical Poissonian construction of the system, introduce the system of coordinated coalescing random walks and establish the duality between the two processes. 
Section~\ref{s:stat_meas}
is devoted to the proof of the main classification result, Theorem~\ref{thm:classification_general},
and its Corollary~\ref{c:long_lattice}, treating separately the infinite- and finite-collision regimes in Sections~\ref{ss:part2_thm_sm} and~\ref{ss:part3_thm_sm} respectively.
Section~\ref{ss_dependence_Lambda} studies the dependence of
the stationary measures on the broadcasting measure $\Lambda$, 
including the proof of the distinctness results, Theorems~\ref{thm:distinctness_trees} and~\ref{thm:distinctness_Zd}, in Section~\ref{ss_distinctness},
and the proof of the continuity result, Theorem~\ref{thm:continuity},
in Section~\ref{ss_continuity}.

\section{Model, assumptions and main results}
\label{s:model_results}

We consider graphs~$G = (V,E)$ that consist of non-empty, finite or countably infinite vertex sets~$V$ and edge sets~$E$. All graphs are assumed to be simple (no self-loops or multi-edges), undirected, locally finite and connected. 
Given a set or event $A$, we denote by $\1_A$ its indicator function and by $|A|$ its cardinality.
We write~$x \sim y$ when~$\{x,y\} \in E$ and call~$x$ and~$y$ neighbors.
Given $x \in V$, we write
\begin{equation}
\label{def:degrees}
N_x := \left\{y \in V \colon y \sim x  \right\}, \qquad \deg(x) := |N_x|
\end{equation}
to denote respectively the set of neighbors and the degree of $x$.
For our definition of the model, we will also assume that $G$ has bounded degrees, i.e.:

\medskip
\medskip
\noindent
{\bf Assumption (BD):}
\vspace{-14pt}
\begin{equation}
\label{def:Delta}
\hspace{-10pt}
\Delta := \sup_{x \in V} \deg(x) < \infty.
\end{equation}

\medskip

Let $\Lambda$ be a (non-negative, non-trivial) finite measure on $[0,1]$, which we call the \emph{broadcast measure}. 
Given a graph $G$ with bounded degrees, 
the~$\Lambda$-\emph{broadcast voter model} on $G$, 
denoted by~$\eta_t, t \ge 0$, is the Markov process with state space~$\{0,1\}^V$ 
and Markov pre-generator
\begin{equation}\label{Markov_pre_dyn}
\mathcal L_\Lambda f(\eta) = \sum_{x \in V} \sum_{ \substack{A \subseteq N_x \\ A \neq \varnothing } }  c_\Lambda(x,|A|) \cdot \big[ f(\eta^{x \to A}) - f(\eta) \big], \quad f:\{0,1\}^V \to \R \text{ local},    
\end{equation}
where
$$ \eta^{x \to A}(z) = \left\{ \begin{array}{cc}
    \eta(x), & z \in A,  \\[2mm]
    \eta(z), & z \notin A,
\end{array} \right. \qquad z \in V,  $$
and
\begin{equation}\label{def_c_Lambda}
c_\Lambda(x,k) = \int_{[0,1]} q^k(1-q)^{|N_x|-k} \; q^{-1} \Lambda(\mathrm{d}q), \qquad k \ge 1.
\end{equation}
The name is motivated by the following interpretation of the dynamics: each site~$x$ (a voter) broadcasts its opinion~$\eta_t(x) \in \{0,1\}$ simultaneously to a non-empty subset~$A$ of its neighbors~$N_x$ at rate~$c_\Lambda(x,|A|)$. 

For readers familiar with~$\Lambda$-coalescents, 
the form of the rates~$c_\Lambda(x,k)$ in \eqref{def_c_Lambda} 
can be viewed as a local version of the usual merger rates. At a broadcast from~$x$, one may think of the opinion carried by~$x$ as already participating in a merger with the opinions of the vertices selected from~$N_x$. Thus, among the~$|N_x|+1$ possible participants consisting of~$x$ and its neighbors, the vertex~$x$ is distinguished and is always present, while only the neighbors are randomly selected. This explains the appearance of~$q^{-1}$ in the broadcast rates, in place of the~$q^{-2}$ factor in Pitman's representation of the multiple-merger rates~\cite{Pitman99}.

Our first result shows that, under Assumption (BD), our model is a well-defined interacting particle system on $\{0,1\}^V$. We refer to \cite[Chapter~1]{Liggett85} for further information.

\begin{theorem}\label{thm:existence}
If $G$ has bounded degrees, then the closure of $\mathcal{L}_{\Lambda}$ in \eqref{Markov_pre_dyn} is the generator of a Feller semigroup $(S^\Lambda_t)_{t \geq 0}$. In particular, the $\Lambda$-broadcast voter model on $G$ is well defined.
\end{theorem}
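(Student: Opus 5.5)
We verify the hypotheses of Liggett's existence theorem for interacting particle systems, \cite[Chapter~I, Theorem~3.9]{Liggett85}. To do so, we first rewrite $\mathcal L_\Lambda$ in Liggett's standard form
\[
\Omega f(\eta)=\sum_{T}\int c_T(\eta,\mathrm d\zeta)\,[f(\eta^\zeta)-f(\eta)].
\]
Here $T$ ranges over finite subsets of $V$, and $c_T(\eta,\cdot)$ is a finite positive measure on $\{0,1\}^T$ that describes the rate at which the coordinates in $T$ jump to $\zeta$. For each $x$ and each nonempty $A\subseteq N_x$, the transition $\eta\mapsto\eta^{x\to A}$ only changes coordinates in $A$. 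It depends on $\eta$ only through $\eta(x)$ and $\eta|_A$. We therefore set $T=A$ and let $c_A(\eta,\cdot)$ collect the contributions $\sum_{x:\,A\subseteq N_x} c_\Lambda(x,|A|)\,\delta_{\eta(x)\mathbf 1_A}$. This measure depends continuously on $\eta$, since it depends only on finitely many coordinates, namely $A$ together with the common neighbours of $A$. This last set is finite and has size at most $\Delta$ by (BD).

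Next we bound the rates. Since $c_\Lambda(x,k)=\int q^{k-1}(1-q)^{|N_x|-k}\Lambda(\mathrm dq)\le \Lambda([0,1])<\infty$ for $k\ge1$, every individual rate is bounded by $\|\Lambda\|$. The $q^{-1}$ singularity is harmless because $k\ge1$. Alternatively, for each fixed $x$ one can sum over $A$ to get
\[
\sum_{\varnothing\ne A\subseteq N_x}c_\Lambda(x,|A|)=\int \frac{1-(1-q)^{|N_x|}}{q}\,\Lambda(\mathrm dq)\le |N_x|\,\|\Lambda\|,
\]
using $1-(1-q)^n\le nq$.

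The two conditions of Liggett's theorem are then checked as follows. Condition (3.3) requires $\sup_{u\in V}\sum_{T\ni u}c_T<\infty$, where $c_T=\sup_\eta c_T(\eta,\{0,1\}^T)$. Given $u$, any $A\ni u$ with positive rate satisfies $A\subseteq N_x$ for some $x\in N_u$, which gives at most $\Delta$ choices of $x$. By the sum above, the total contribution is at most $\Delta\cdot\Delta\|\Lambda\|$. Condition (3.8) requires $\sup_u\sum_{T\ni u}\sum_{w\ne u}c_T(w)<\infty$, where $c_T(w)$ measures the sensitivity of $c_T(\cdot,\cdot)$ to the coordinate $w$. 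For $T=A\subseteq N_x$ containing $u$, the rate depends only on coordinates in $\{x\}\cup A$ and, through the summation over $x$, on the common neighbours of $A$. Hence $c_A(w)\ne0$ only for $w$ at graph distance at most $2$ from $u$. There are at most $1+\Delta+\Delta^2$ such $w$, and each satisfies $c_A(w)\le 2c_A$. The resulting double sum is bounded by a constant times $\Delta^{4}\|\Lambda\|$.

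With both conditions in hand, \cite[Theorem I.3.9]{Liggett85} yields that the closure of $\mathcal L_\Lambda$, defined on local (or finite-total-variation) functions, generates a Feller semigroup. The main care point is the bookkeeping in the rewriting step: Liggett's rates are indexed by the changed set $T$ and must be measurable and continuous in $\eta$ with target configurations in $\{0,1\}^T$. Once the rewriting is correct, the degree bounds make both summability conditions routine. As an alternative, one could also use the Poisson graphical construction of Section~\ref{ss_alternative_description}, with a Harris percolation argument based on the total rate bound $\Delta\|\Lambda\|$ per vertex. We expect the first route to suffice.
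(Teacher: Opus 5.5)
Your proposal is correct and follows essentially the same route as the paper: write the generator in Liggett's form with $T=A$ and $c_T(\eta,\cdot)=\sum_{x:\,T\subseteq N_x}c_\Lambda(x,|T|)\,\delta_{\eta(x)\mathbf 1_T}$, then verify conditions (3.3) and (3.8) of Liggett's Theorem I.3.9 using (BD). The only difference is in the constants. The paper computes $\sum_{T\ni z}C_T=\Lambda[0,1]\,|N_z|$ exactly via the binomial identity and notes that $c_T(u)\neq 0$ only when $T\subseteq N_u$, giving $M\le 2\Lambda[0,1]\Delta^2$; your cruder bounds of order $\Delta^2\|\Lambda\|$ and $\Delta^4\|\Lambda\|$ are equally sufficient.
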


The proof of Theorem~\ref{thm:existence} will be given in Section~\ref{ss:proof_existence}.

\paragraph{The standard voter model.} 
The usual voter model is recovered by taking~$\Lambda=\delta_0$. Indeed, for every~$x$,
$c_{\delta_0}(x,1) = 1$ and $c_{\delta_0}(x,k) = 0$ for all $k \ge 2$. 
Thus, only updates involving a single neighbor occur, each at rate~$1$. Consequently, the dynamics reduce to the voter model with adoption rates~$r(x,y) = 1$ for every~$x \sim y$. 



\subsection{Main results}
\label{ss:main_results}

In light of Theorem~\ref{thm:existence}, Assumption (BD) will always be made henceforth. 
Before we can state our main results, we need to discuss some properties of random walks on $G$. To that end, let $X=(X_t)_{t \geq 0}$ denote a continuous-time simple random walk on $G$ with jump rate $\Lambda[0,1]$ to each neighbor, i.e., the Markov process on $V$ that, when at $x \in V$, jumps to a neighbor $y \sim x$ at rate~$\Lambda[0,1]$. We will denote by $P_x$ the law of $X$ started from~$X_0=x$.

Let $X,Y$ be two independent copies of the random walk defined in the previous paragraph. 
We say that the graph $G$ has the \emph{infinite collision property} if, for all $x \in V$,
\begin{equation}
\label{def:ICP}
P_{x,x} \left( \left|\{t \geq 0 \colon\, X_{t-} \neq Y_{t-} \text{ and } X_t = Y_t\}\right| = \infty \right) =1
\end{equation}
where $P_{x,y}$ denotes the joint law of $X,Y$ when started from $x,y \in V$ respectively.
And we say that $G$ has the \emph{finite collision property} if, for all $x \in V$,
\begin{equation}
\label{def:FCP}
P_{x,x} \left( \left|\{t \geq 0 \colon\, X_{t-} \neq Y_{t-} \text{ and } X_t = Y_t\}\right| < \infty \right) =1.
\end{equation}
For graphs with bounded degrees, these two alternatives form a dichotomy (cf.~\cite[Remark 2.3]{Asto2026Coll}). Let~$p(x,y):=\mathds{1}\{x \sim y\} \deg(x)^{-1}$ be the transition kernel of the embedded discrete-time simple random walk. 
Denote by
$$ \mathscr{H} = \left\{ \alpha: V \to [0,1] \quad
\text{such that} \quad \sum_{y \sim x} p(x,y)\alpha(y) = \alpha(x) \text{ for all } x \in V \right\}$$
the space of~\([0,1]\)-valued harmonic functions for simple random walk on~\(G\). 
For $\alpha \in \mathscr{H}$, let~$\nu_\alpha$ be the product probability measure on~$\{0,1\}^V$ with one-site marginals
\begin{equation}\label{def_nu_alpha}
\nu_\alpha( \{ \eta : \eta(x) = 1 \} ) = \alpha(x).
\end{equation}
We denote by~$\delta_{\overline{0}}$ and~$\delta_{\overline{1}}$ the point masses at the consensus configurations all-0 and all-1 corresponding to the constant harmonic functions~$\alpha \equiv 0$ and~$\alpha \equiv 1$, respectively.

We say that a probability measure $\mu$ on $\{0,1\}^V$ is \emph{stationary} for the $\Lambda$-broadcast voter model if~$\mu = \mu S^\Lambda_t$ for every~$t \ge 0$, and denote by~$\mathscr{I}_\Lambda$ the set of stationary measures. 
We say that~$\mu \in \mathscr{I}_\Lambda$ is \emph{extremal} if it cannot be written as a nontrivial convex combination of distinct stationary measures. 
The set of extremal stationary measures is denoted by~$\mathscr{I}_{\Lambda,e}$. 
By Theorem~1.8(c) of~\cite{Liggett85} (note that ``stationary'' is called ``invariant'' therein), \(\mathscr I_\Lambda\) is the closed convex hull of~\(\mathscr I_{\Lambda,e}\). 
Consequently,~$\mathscr I_\Lambda$ is completely characterized by $\mathscr I_{\Lambda,e}$.

Our first main result gives a complete classification of $\mathscr I_{\Lambda,e}$ in terms of the collision behaviour of simple random walks on \(G\).
\begin{theorem}
\label{thm:classification_general}
Let~\(G\) be a graph with bounded degrees
and let~\(\Lambda\) be a finite measure on~\([0,1]\).
\begin{enumerate}
    \item For each \(\alpha\in\mathscr H\), the weak limit~$  \mu_{\alpha,\Lambda} := \lim_{t\to\infty}\nu_\alpha S_t^\Lambda $ exists and is stationary.
    \item If~\(G\) has the infinite collision property, then~$\mathscr I_{\Lambda,e} = \{\delta_{\mathbf 0},\delta_{\mathbf 1}\}.$
    \item If~\(G\) has the finite collision property, then~$\mathscr I_{\Lambda,e} = \{\mu_{\alpha,\Lambda}:\alpha\in \mathscr H\}.$
\end{enumerate}
\end{theorem}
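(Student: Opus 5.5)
The whole argument goes through the moment duality of Lemma~\ref{lemma:duality} with the system of coordinated coalescing random walks (CCRW). For a finite set $A\subseteq V$ the duality reads
\[
\E_\eta\Big[\prod_{x\in A}\eta_t(x)\Big]=\E_A\Big[\prod_{y\in A_t}\eta(y)\Big],
\]
where $(A_t)$ is the CCRW started from $A$. This identity determines $\nu S^\Lambda_t$ through the correlation functions $\mu\{\eta\equiv 1 \text{ on } A\}$, which characterize laws on $\{0,1\}^V$.

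\textbf{Part 1.} For $\nu_\alpha$ the duality gives $\nu_\alpha S_t^\Lambda\{\eta\equiv1\text{ on }A\}=\E_A[\prod_{y\in A_t}\alpha(y)]$. By Lemma~\ref{lemma:CCRW_properties} each particle of the CCRW is a simple random walk. Harmonicity of $\alpha$ with respect to $p$ is the same as harmonicity for the continuous-time walk with equal rates to all neighbours. I want to show that $M_t:=\prod_{y\in A_t}\alpha(y)$ is a bounded martingale, in which case $\E_A[M_t]$ is constant in $t$. This fails in general, so instead I would use the scheme of Liggett's classical proof.

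Since the number of particles $|A_t|$ is nonincreasing, it converges almost surely, so after a finite random time no coalescences occur. The particles are then a family of walks whose joint jumps are still coordinated, but without merging. For non-coalescing configurations the product $\prod\alpha(y)$ should be a martingale; I would check this on the generator, since in a broadcast event from $z$ each particle on $N_z$ jumps to $z$, and after averaging over $z$ and using harmonicity of $\alpha$ the terms cancel. At coalescences the product changes by a factor $\alpha(z)^{k-1}\le 1$, so $M_t$ is a bounded supermartingale. Hence $M_t$ converges almost surely, and by bounded convergence $\E_A[M_t]$ converges. The limiting correlation functions define $\mu_{\alpha,\Lambda}$, and stationarity of a weak limit of $\nu S_t$ follows from the Feller property (Theorem~\ref{thm:existence}).

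\textbf{Part 2 (infinite collision).} Here I would show that any two CCRW particles coalesce almost surely; by projective consistency it suffices to treat two particles. The key tool is the coupling with independent walks. Up to their first meeting or joint jump, two CCRW particles behave like independent walks, apart from broadcast events that may move both simultaneously. Each collision of the independent pair gives at least a uniformly positive probability (by (BD)) of coalescence. With infinitely many collisions, a conditional Borel--Cantelli argument yields coalescence almost surely. It follows that $P(\eta_t(x)\ne\eta_t(y))\to0$ for every initial law, so every stationary $\mu$ is concentrated on $\{\overline0,\overline1\}$.

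\textbf{Part 3 (finite collision).} This is the main obstacle. For each $\mu\in\mathscr I_\Lambda$, the function $\alpha(x)=\mu\{\eta(x)=1\}$ is harmonic by duality with one walk. The task is to show that $\mu$ is a mixture of the measures $\mu_{\alpha,\Lambda}$, and that each $\mu_{\alpha,\Lambda}$ is extremal. The first step is to show that, with positive probability, finitely many CCRW particles eventually separate and afterwards behave asymptotically like independent walks. Via the coupling, finite collision implies that the coupled independent walks eventually stop meeting. Once the particles are never adjacent to a common vertex, the CCRW coincides with the independent system. The delicate point is that coordinated jumps occur whenever two walks are at distance two, not only at distance zero. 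So I would need finite collision to imply finitely many visits at distance at most two, using (BD) and a uniform comparison argument. Given this, $\prod_{y\in A_t}\eta(y)$ becomes asymptotically close to an independent product. This yields the standard Liggett characterization: a stationary measure is extremal iff it equals $\lim\nu_{\alpha}S_t$ for the harmonic $\alpha$ obtained as the almost sure limit of $\mu$-conditional densities along walks. Extremality of each $\mu_{\alpha,\Lambda}$ then follows from its asymptotic product structure (mixing along separated sites).
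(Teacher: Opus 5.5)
\textbf{Part 1 has a sign error.} The process $M_t=\prod_{y\in A_t}\alpha(y)$ is a bounded \emph{sub}martingale, not a supermartingale. Your own remark shows this: for $\alpha\equiv p$ one has $M_t=p^{|A_t|}$, which is nondecreasing because $|A_t|$ is nonincreasing. A merger of $k$ walkers at $z$ replaces the contribution $\alpha(z)^k$ they would have as separate walkers by $\alpha(z)$, which is an increase. Also, there are no ``coordinated jumps without merging'': a joint jump always lands every moved walker on the broadcasting vertex. So at a configuration with two walkers within distance $2$ the drift does not split into a martingale part and a coalescence factor; it must be computed as a whole.

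What you need, and what is true, is $\mathcal G^{(k)}_\Lambda g\ge 0$ for $g(\mathbf y)=\prod_{z\in S(\mathbf y)}\alpha(z)$. This drift equals $\int \mathcal L_\Lambda D_{\mathbf y}\,d\nu_\alpha$. Subtract the harmonic identity $\sum_{c\in S(\mathbf y)}\prod_{z\in S(\mathbf y)\setminus\{c\}}\alpha(z)\sum_{x\sim c}(\alpha(x)-\alpha(c))=0$. Then condition on $\eta(x)$ and on the zeros among the occupied neighbours of each broadcasting vertex $x$; only nonnegative terms remain. In the simplest case, $\Lambda=\delta_1$ with walkers at $y_1,y_2$ sharing a neighbour $x\notin\{y_1,y_2\}$, the excess of the joint jump over two single jumps is $\alpha(x)(1-\alpha(y_1))(1-\alpha(y_2))+(1-\alpha(x))\alpha(y_1)\alpha(y_2)\ge 0$.

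With this check, $t\mapsto \widehat{\nu_\alpha S^\Lambda_t}(\mathbf x)=E_{\mathbf x}g(\mathbf Y_t)$ is nondecreasing, and Part 1 follows by monotone convergence. This is a genuinely more elementary route than the paper's. The paper instead proves the Cauchy bound $|\widehat{\nu_\alpha S^\Lambda_{t+s}}(\mathbf x)-\widehat{\nu_\alpha S^\Lambda_t}(\mathbf x)|\le 2E_{\mathbf x}g^{(k)}_{\mathrm{break}}(\mathbf Y_t)\to 0$ via the coupling with independent walks, a lemma it needs later anyway.

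Part 2 is essentially the paper's argument. One correction: the CCRW pair agrees with the independent pair only until the two are within distance $2$, not until they meet or jump jointly. So after each failed coalescence attempt you must restart the coupling from the current positions, using that the infinite collision property holds from every pair of starting points.

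\textbf{Part 3 is the genuine gap.} You correctly note that finite collision must be upgraded to finitely many visits at distance at most $2$ (the paper's Lemma~\ref{lemma:main_part3}). Beyond that, you restate the conclusion and appeal to ``the standard Liggett characterization'', whose adaptation is the actual work. Two ingredients are missing.

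(i) An identification criterion: $\mu=\mu_{\alpha,\Lambda}$ if and only if $\Xi^x_t=E_x\eta(X_t)\to\alpha(x)$ in $L^2(\mu)$ for every $x$. The paper derives this from both $E_{\mathbf x}g^{(k)}_{\mathrm{break}}(\mathbf Y_t)\to0$ and $E_{\mathbf x}g^{(k)}_{\mathrm{break}}(\mathbf X_t)\to0$. Extremality of $\mu_{\alpha,\Lambda}$ then follows because $\mu\mapsto\int(\Xi^x_t-\alpha(x))^2\,d\mu$ is affine and nonnegative. It does not follow from ``mixing along separated sites'': mixing under a group action only gives extremality among invariant measures, and a general graph has no such action.

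(ii) For the reverse inclusion, you need the limits $h^{(k)}(\mathbf x)=\lim_t E_{\mathbf x}\widehat\mu(\mathbf X_t)$ and a split into two cases.
\begin{itemize}
\item Recurrent graphs with finite collisions (such as the comb). Here one needs that bounded harmonic functions of $k$ independent walks are constant, exchangeability of a smoothed limit measure, and de Finetti's theorem.
\item Transient graphs. Here one needs three things: the decorrelation $\lim_t E_x\widehat\mu(X_t,y)=\widehat\mu(x)\widehat\mu(y)$ for extremal $\mu$; harmonicity of $h^{(2)}$ in each coordinate separately; and the heat-kernel bound $E_x g^{(2)}_{\mathrm{break}}(X_t,x)\le c_0\int_{t+1}^\infty p_r(x,x)\,dr\to0$. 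The decorrelation rests on $\mathscr I_\Lambda=\mathscr I_{\Lambda,\infty}$, which is not automatic since subsequential limits of $\nu S^\Lambda_t$ need not be stationary. Together these give $h^{(2)}(x,x)=\alpha(x)^2$.
\end{itemize}
Your ``almost sure limit of $\mu$-conditional densities along walks'' is never shown to exist. Showing that it equals the deterministic $\alpha$ when $\mu$ is extremal is exactly this missing step.
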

Thus, although the dual process allows coordinated multiple mergers, the qualitative classification of stationary measures is governed by the same collision dichotomy as in the classical voter model
(cf.~\cite[Chapter V, Theorem~1.8]{Liggett85}).



As a direct application, we obtain a particularly simple description for finite-range broadcast models on~\(\mathbb Z^d\), defined as follows. Let
\begin{equation}\label{def_long_lattice}
G_{d,R}=(\mathbb Z^d,E_R), \qquad \text{where} \qquad \{x,y\} \in E_R \quad\Longleftrightarrow\quad 0< \|x-y\|_1 \leq R.    
\end{equation}
Thus, in the corresponding broadcast voter model, a voter may simultaneously broadcast its opinion to vertices lying within range~\(R\).
In this case, the harmonic functions are constant:~$\alpha \equiv p,\; p \in [0,1]$, as we will show in the proof of the following.
\begin{corollary}
\label{c:long_lattice}
Fix \(R\in\mathbb N\) and let \(\Lambda\) be a finite measure on \([0,1]\). For the \(\Lambda\)-broadcast voter model on $G_{d,R}$,
\begin{enumerate}
    \item if \(d=1,2\), then $ \mathscr I_{\Lambda,e} = \{\delta_{\mathbf 0},\delta_{\mathbf 1}\}; $ 
    \item if \(d\ge3\), then
    $ \mathscr I_{\Lambda,e} = \{\mu_{p,\Lambda}:p\in[0,1]\};  $
    \item if \(d\ge3\), then each \(\mu_{p,\Lambda}\) is ergodic with respect to translations of \(\mathbb Z^d\).    
\end{enumerate}
\end{corollary}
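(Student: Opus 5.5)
The plan is to reduce all three parts to Theorem~\ref{thm:classification_general}. This needs two inputs about $G_{d,R}$: a description of $\mathscr H$, and the collision dichotomy for the random walk.

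\emph{Harmonic functions.} On $G_{d,R}$ the embedded walk has the translation-invariant kernel $p(x,y)=\1\{0<\|x-y\|_1\le R\}/D$, where $D=|\{z:0<\|z\|_1\le R\}|$. Every $\alpha\in\mathscr H$ is bounded and harmonic for an irreducible random walk on $\Z^d$ with finitely supported, symmetric increments. By the Choquet--Deny theorem (or the Liouville property for such walks), $\alpha$ is constant. The direct proof is a coupling argument: couple the walks started from $x$ and $x+e$ so that they meet almost surely. This works for mean-zero, finite-range, aperiodic-enough walks; laziness can be introduced by time change if needed. Then $|\alpha(x)-\alpha(x+e)|\le P(\text{no meeting})=0$. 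Hence $\mathscr H=\{\alpha\equiv p: p\in[0,1]\}$.

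\emph{Collision property.} The difference $X_t-Y_t$ of two independent copies is again a continuous-time symmetric finite-range walk on $\Z^d$, with doubled rate. Its jump chain has mean zero and finite variance, so by the Chung--Fuchs theorem it is recurrent for $d=1,2$ and transient for $d\ge3$. A collision time is a time at which the difference process jumps into $0$. Each jump of $X$ or $Y$ changes the difference, so the collision events are exactly the visits of the difference walk to $0$ from a nonzero state. Recurrence therefore gives infinitely many collisions almost surely, which is the infinite collision property. Transience gives finitely many visits, which is the finite collision property. Part~1 then follows from Theorem~\ref{thm:classification_general}(2). Part~2 follows from Theorem~\ref{thm:classification_general}(3) together with the description of $\mathscr H$.

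\emph{Ergodicity.} This is the step I expect to be the main obstacle. Write $\tau_z$ for the shifts of $\Z^d$. Both $\nu_p$ and the dynamics are translation invariant, so $\mu_{p,\Lambda}$ is translation invariant. I would show that $\mu_{p,\Lambda}$ is mixing, which implies ergodicity. It suffices to prove that for finite sets $A,B$,
\[
\mu_{p,\Lambda}(\eta\equiv1 \text{ on } A\cup(B+z))-\mu_{p,\Lambda}(\eta\equiv1\text{ on }A)\,\mu_{p,\Lambda}(\eta\equiv1\text{ on }B)\to0
\]
as $|z|\to\infty$, since such cylinder events determine the law through inclusion--exclusion. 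By the duality of Lemma~\ref{lemma:duality}, the first term is $\lim_t \E[p^{|\xi_t^{A\cup(B+z)}|}]$, with $\xi$ the coordinated coalescing random walks. I would use the coupling with independent random walks, combined with projective consistency (Lemma~\ref{lemma:CCRW_properties}), to show the following. Outside the event that some particle started in $A$ ever comes within distance $2R$ of some particle started in $B+z$, the system started from $A\cup(B+z)$ can be coupled so that it splits into the two subsystems started from $A$ and $B+z$. After the shift $\tau_z$, the subsystem from $B+z$ has the same law as the one from $B$. Moreover, the two subsystems are independent, because they use disjoint broadcast clocks when they stay far apart.

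The probability that two independent walks started at distance of order $|z|$ ever come within distance $2R$ tends to $0$ by transience in $d\ge3$. The delicate point is making the decoupling rigorous. Interactions between particles of the two groups are only possible when they are within distance $2R$, since they must share a common broadcasting neighbor. So I would construct the coupling by running the two groups on independent graphical constructions until the first time such proximity occurs, and bound the probability of that time being finite by a union bound over pairs of independent walks. This yields asymptotic factorization uniformly in $t$, hence mixing, hence ergodicity.
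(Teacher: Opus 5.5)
Your proposal is correct and follows the paper's proof. You use Choquet--Deny for constancy of harmonic functions and feed the collision dichotomy into Theorem~\ref{thm:classification_general}; you get mixing by running the dual from $A$ and from $B+z$ on independent graphical constructions until particles of the two groups come within $\ell^1$-distance $2R$, then use a union bound over pairs of independent walks together with transience. The only variation is that you derive the infinite/finite collision property directly from recurrence/transience of the difference walk $X_t-Y_t$ (via Chung--Fuchs), whereas the paper invokes the general correspondence between recurrence/transience and infinite/finite collisions on transitive graphs, so your version is more self-contained for $G_{d,R}$.
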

Theorem~\ref{thm:classification_general} and Corollary~\ref{c:long_lattice} 
are proved in Section~\ref{s:stat_meas}.

\begin{remark}
Corollary 3.2 shows that, for finite-range broadcast models on \(\mathbb Z^d\), neither the broadcast range \(R\) nor the broadcasting measure \(\Lambda\) changes the qualitative classification of the extremal stationary measures. This does not mean, however, that \(\Lambda\) has no effect on the stationary laws. The dependence of the family~\(\mu_{\alpha,\Lambda}\) on the broadcasting measure will be studied in Section~\ref{ss_dependence_Lambda}.
\end{remark}

\medskip
Our second main question concerns how the stationary measures depend on the broadcasting
measure $\Lambda$ itself. First note that, on a graph of maximum degree $\Delta$, at most $\Delta$ neighbors can take part in a broadcast, and it is not difficult to show that, apart from a linear time-change that doesn't affect the stationary measures, 
the dynamics depends on $\Lambda$ only through the moments
\begin{equation}
\label{def:moments}
\lambda^{(k)}_\Lambda=\frac{\int_{[0,1]} q^{k-1}\,\Lambda(dq)}{\Lambda[0,1]},\qquad 1\le k\le\Delta.
\end{equation}
Noting that $\lambda^{(1)}_\Lambda=1$ for any $\Lambda$, denote the moment vector (or profile) by
\begin{equation}
\label{def:momentvector}
\mathsf{m}(\Lambda) := \left(\lambda^{(2)}_\Lambda, \ldots, \lambda^{(\Delta)}_\Lambda \right).
\end{equation}
In Lemma~\ref{lemm:m_imply_mu} below, 
we show that equality of the moment profiles implies equality of the stationary measures. 
We then investigate the converse, which by Theorem~\ref{thm:classification_general} is interesting only when the graph has the finite collision property. 
For general such graphs with bounded degrees, we obtain
in Lemma~\ref{lemma_FCP_distinct} below several sufficient conditions for distinct moment profiles to yield distinct stationary measures. This leads to complete characterizations on some natural classes of graphs, as stated next.
\begin{theorem}
\label{thm:distinctness_trees}
Let~\(G\) be a tree with bounded degrees and the finite collision property. Then, for every nontrivial \(\alpha\in\mathscr H\),
$$ \mu_{\alpha,\Lambda_1} = \mu_{\alpha,\Lambda_2} \quad\Longleftrightarrow\quad \mathsf{m}(\Lambda_1)=\mathsf{m}(\Lambda_2). $$    
\end{theorem}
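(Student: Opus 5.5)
The direction ``$\Leftarrow$'' is Lemma~\ref{lemm:m_imply_mu}: equal moment profiles give the same dynamics up to a linear time change, hence the same limits $\mu_{\alpha,\Lambda}$. For ``$\Rightarrow$'' I argue by contraposition. Suppose $\mathsf m(\Lambda_1)\neq\mathsf m(\Lambda_2)$, and let $k\ge 2$ be the smallest index with $\lambda^{(k)}_{\Lambda_1}\neq\lambda^{(k)}_{\Lambda_2}$. The goal is a finite set $B\subseteq V$ whose probability $\mu_{\alpha,\Lambda}(\eta\equiv 1 \text{ on } B)$ separates the two measures.

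By the moment duality (Lemma~\ref{lemma:duality}) and the existence of the limit,
\begin{equation*}
\mu_{\alpha,\Lambda}(\eta\equiv1\text{ on }B)=\lim_{t\to\infty}\E^{\Lambda}_B\Big[\prod_{z\in \xi_t}\alpha(z)\Big].
\end{equation*}
Here $\xi_t$ is the system of coordinated coalescing random walks started from $B$. I take $B$ to be $k$ distinct neighbours of a vertex $x$ with $\deg(x)\ge k$. Such an $x$ exists because $\lambda^{(k)}$ only enters the rates through broadcasts to $k$ neighbours.

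The plan is to decompose according to the first coalescence event. Until some particles merge, the particles move as independent random walks, using the coupling with independent walks. At a vertex $x$ whose $k$ neighbours are all occupied, there is a broadcast that merges exactly $j$ of them. By the paintbox form of $c_\Lambda$, the rate of such an event, summed over how many non-occupied neighbours are also hit, is a linear combination of $\lambda^{(2)},\dots,\lambda^{(k)}$. The merger of all $k$ particles simultaneously carries $\lambda^{(k)}$ with a nonzero coefficient.

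By minimality of $k$, all events involving fewer than $k$ particles have identical rates under $\Lambda_1$ and $\Lambda_2$. So the difference of the two dual expectations can be written, via a Duhamel/generator-perturbation formula, as
\begin{equation*}
(\lambda^{(k)}_{\Lambda_1}-\lambda^{(k)}_{\Lambda_2})\cdot\Lambda[0,1]\int_0^\infty \E\Big[\sum_{y}\1\{\text{all $k$ particles lie in }N_y\}\,D_y(\xi_s)\Big]ds .
\end{equation*}
The factor $D_y$ is the difference in the limiting value $h(\cdot)=\lim_t\E[\prod\alpha]$ between merging everything into $y$ and merging only a subset. Since the $k$ particles stay alive only while distinct, this expectation concerns configurations of exactly $k$ particles, and the total rate is finite.

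The main obstacle is showing that this integral is nonzero, i.e.\ that $D_y$ has a sign. This is where the tree structure enters.

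On a tree, the neighbours of $y$ are pairwise non-adjacent and lie in distinct branches. Once particles sit in different branches at $y$, they can only meet by passing through $y$. I expect this to yield a monotonicity: the limit $h$ for $k$ particles collapsed at $y$ equals $\alpha(y)$ times an expression in fewer particles. Comparing it with partial mergers, one uses Jensen/Cauchy–Schwarz on the bounded martingale $\alpha(X_t)$. This gives $E_y[\alpha(X_\infty)]^{j}\le E[\prod\alpha(X^i_\infty)]$-type inequalities, in the form
\begin{equation*}
\E\Big[\prod_{i}\alpha(X^i_\infty)\Big]\neq \alpha(y)\,\E\Big[\prod_{i\neq}\cdots\Big].
\end{equation*}
Strictness should come from non-triviality of $\alpha$: nontrivial harmonic $\alpha$ has a non-degenerate boundary limit, so $\mathrm{Var}(\alpha(X_\infty))>0$. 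The finite collision property ensures positive probability of never colliding again, which keeps the inequality strict.

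I would first verify this sign claim for $k=2$, where it reduces to $\alpha(y)^2$ versus $\E[\alpha(X_\infty)\alpha(Y_\infty)]$ for walks from two different neighbours. I would then attempt the general $k$ by induction on the number of merged particles, using the sufficient conditions of Lemma~\ref{lemma_FCP_distinct} to package the conclusion.
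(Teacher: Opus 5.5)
\textbf{There is a genuine gap: the sign claim for $D_y$ fails when $\kappa$ is odd.} Your Duhamel representation of the $\kappa$-point function on $B=S(\mathbf z)\subseteq N_v$ can be justified, but the sign claim it rests on is false for odd $\kappa$, and no tree argument can rescue it.

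First, the perturbation is not ``full merger versus partial merger''. Take a dual configuration whose $\kappa$ distinct particles occupy $S\subseteq N_y$. Every nonempty $B'\subseteq S$ then has rate difference $(-1)^{\kappa-|B'|}(\lambda^{(\kappa)}_{\Lambda_1}-\lambda^{(\kappa)}_{\Lambda_2})$, so even single jumps feel $\lambda^{(\kappa)}$, and before merging the particles are not independent walks. Inclusion--exclusion turns your factor $D_y$ into
\[
\mu_{\alpha,\Lambda_2}\bigl(\eta(y)=1,\ \eta\equiv 0\text{ on }S\bigr)+(-1)^{\kappa}\,\mu_{\alpha,\Lambda_2}\bigl(\eta(y)=0,\ \eta\equiv 1\text{ on }S\bigr),
\]
which is the expectation of the term $O_{\mathbf x}$ of Lemma~\ref{lemma_diffL12}.

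For even $\kappa$ this quantity is nonnegative, and strictly positive at the initial configuration, so your route works there. For odd $\kappa$ it has no sign, and your test event is useless. Take the $3$-regular tree, $\alpha\equiv\tfrac12$, $\Lambda_1=\delta_{1/2}$ and $\Lambda_2=\tfrac12(\delta_0+\delta_1)$; then $\lambda^{(2)}$ agrees and $\kappa=3$. The flip $\eta\mapsto 1-\eta$ commutes with the dynamics and preserves $\nu_{1/2}$, so each $\mu_{1/2,\Lambda}$ is flip invariant. Inclusion--exclusion then gives $2\widehat\mu_{1/2,\Lambda}(B)=\sum_{C\subsetneq B}(-1)^{|C|}\widehat\mu_{1/2,\Lambda}(C)$ whenever $|B|$ is odd (with $\widehat\mu(\varnothing)=1$). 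Correlations of fewer than $\kappa$ sites only involve $\lambda^{(2)},\dots,\lambda^{(\kappa-1)}$, so \emph{all} $\kappa$-point functions of $\mu_{1/2,\Lambda_1}$ and $\mu_{1/2,\Lambda_2}$ coincide. The obstruction is a symmetry, not geometry, so Jensen or Cauchy--Schwarz on trees cannot help. Your planned check at $k=2$ is also misleading, since $2$ is even.

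\textbf{The missing idea is to avoid integrating along the dual trajectory.} Proposition~\ref{prop:main_part3} has an $L^2$ criterion that involves $\Lambda$ only through the common random walk. Hence $\mu_{\alpha,\Lambda_1}=\mu_{\alpha,\Lambda_2}$ if and only if $\mu_{\alpha,\Lambda_1}$ is $\mathcal L_{\Lambda_2}$-stationary. It therefore suffices to find one local $f$ with $\int(\mathcal L_{\Lambda_1}-\mathcal L_{\Lambda_2})f\,d\mu_{\alpha,\Lambda_1}\neq0$, which is a single generator evaluation.

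For odd $\kappa$ the paper takes the $(\kappa+1)$-point function $f=D_{\mathbf x}$ with $\mathbf x=(v,\mathbf z)$. On a tree no $u\neq v$ is adjacent to two points of $\{v\}\cup S(\mathbf z)$, so the only vertex with $\kappa$ occupied neighbours is the occupied vertex $v$. Hence $O_{\mathbf x}=0$, and Lemma~\ref{lemma_diffL12} leaves $(\lambda^{(\kappa)}_{\Lambda_1}-\lambda^{(\kappa)}_{\Lambda_2})\,\mu_{\alpha,\Lambda_1}(\eta(v)=1,\ \eta\equiv0\text{ on }S(\mathbf z))$. This is nonzero by the positivity lemma for cylinder events. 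It is case~4 of Lemma~\ref{lemma_FCP_distinct}; even $\kappa$ is case~1.

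Running your Duhamel scheme from this larger set would not work either. Later dual configurations reintroduce signed $O$-type terms. When $\deg(v)>\kappa$ they also produce configurations of $\kappa+1$ particles around one vertex, where $\lambda^{(\kappa+1)}$ enters. Your closing appeal to Lemma~\ref{lemma_FCP_distinct} points the right way, but checking its geometric condition on trees is exactly the step the proposal never carries out.
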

Examples of trees satisfying the above include transient cases such as 
supercritical Galton-Watson trees with bounded degrees conditioned to survive,
but also recurrent ones such as the Comb lattice (cf.\ \cite{KrishnapurPeres2004}).

We also obtain the analogous result for the integer lattice in transient dimensions.
\begin{theorem}
\label{thm:distinctness_Zd}
Let \(d\ge3\), and consider the nearest-neighbor broadcast voter model on \(\mathbb Z^d\). 
Then, for every \(p\in(0,1)\),
$$ \mu_{p,\Lambda_1} = \mu_{p,\Lambda_2} \quad\Longleftrightarrow\quad \mathsf{m}(\Lambda_1)=\mathsf{m}(\Lambda_2). $$    
\end{theorem}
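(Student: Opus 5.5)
The direction ``$\Leftarrow$'' is Lemma~\ref{lemm:m_imply_mu}. For ``$\Rightarrow$'' we argue by contraposition: assuming $\mathsf{m}(\Lambda_1)\neq\mathsf{m}(\Lambda_2)$, we exhibit a finite set $B\subset\Z^d$ with $\mu_{p,\Lambda_1}(\eta\equiv 1\text{ on }B)\neq\mu_{p,\Lambda_2}(\eta\equiv 1 \text{ on } B)$. On nearest-neighbour $\Z^d$ we have $\Delta=2d$, so the profile is $(\lambda^{(2)},\dots,\lambda^{(2d)})$. Let $k\ge2$ be the smallest index with $\lambda^{(k)}_{\Lambda_1}\neq\lambda^{(k)}_{\Lambda_2}$. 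By duality (Lemma~\ref{lemma:duality}) and part~1 of Theorem~\ref{thm:classification_general}, $\mu_{p,\Lambda}(\eta\equiv1\text{ on }B)=\E[p^{N_\infty^B}]$, where $N^B_\infty$ is the eventual number of particles of the coordinated coalescing random walk started from $B$. This limit is a.s.\ finite and well defined under the finite collision property.

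The test set is $B=\{e_1,\dots,e_k\}\subset N_0$, i.e.\ $k$ neighbours of the origin; more symmetric choices such as $\pm e_i$ work similarly. We expand $\E[p^{N^B_\infty}]$ according to the first event of the dual. Because of the time-change invariance, we normalise $\Lambda[0,1]=1$. The first transition is one of the following:
\begin{itemize}[label={}]
\item a broadcast from $0$ capturing a subset $S\subseteq B$ with $|S|=j\ge 2$, at rate given by $c_\Lambda$, i.e.\ by the moments $\lambda^{(j)},\dots,\lambda^{(2d)}$, which merges $j$ particles into one at the origin;
\item an event producing only single-particle jumps or binary-type moves;
\item merges through other common neighbours. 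In $\Z^d$ the vertices $e_i,e_j$ also share the neighbour $e_i+e_j$, so these events involve at most $2$ particles of $B$.
\end{itemize}
The key point is that the rate at which \emph{all $k$} particles coalesce simultaneously at the first step is proportional to $\sum_{A\supseteq B}c_\Lambda(0,|A|)=\lambda^{(k)}$. All other first-step rates depend only on $\lambda^{(j)}$ with $j<k$, together with the total rate, which is fixed after the normalisation. By minimality of $k$, these coincide for $\Lambda_1$ and $\Lambda_2$. Moreover, the post-event configurations are subsets of size $<k$ (up to translation), and for these the quantity $\E[p^{N_\infty}]$ depends only on the first $k-1$ moments. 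This holds by the same first-step/ordering argument, or by noting that a system of $j$ particles only ever sees $c_\Lambda$ through $\lambda^{(1)},\dots,\lambda^{(j)}$, via projective consistency (Lemma~\ref{lemma:CCRW_properties}).

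We then write $\phi_\Lambda(B)=\E[p^{N^B_\infty}]$ and apply the strong Markov property at the first exit from the configuration $B$ itself. Since single-particle jumps keep $k$ particles in the system, this is the step requiring care. The plan is to use the renewal-type identity
$$\phi_\Lambda(B)=\frac{\sum_{C}r_\Lambda(B,C)\phi_\Lambda(C)}{\sum_C r_\Lambda(B,C)}$$
along the full $k$-particle chain. After subtracting the two equations, we aim to show that $\phi_{\Lambda_1}(B)-\phi_{\Lambda_2}(B)$ equals $(\lambda^{(k)}_{\Lambda_1}-\lambda^{(k)}_{\Lambda_2})$ times the expected total time the $k$-particle system spends in configurations of $k$ particles all adjacent to a common vertex, weighted by $[\phi(\text{one particle})-\phi(\text{those }k)]=p-\phi(\cdot)$. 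This gives
$$\phi_{\Lambda_1}(B)-\phi_{\Lambda_2}(B)=\bigl(\lambda^{(k)}_{\Lambda_1}-\lambda^{(k)}_{\Lambda_2}\bigr)\,\E\!\int_0^\infty \sum_{x}\1\{\text{all $k$ particles in }N_x\}\,\bigl(p-\phi(\xi_t)\bigr)\,dt,$$
with the expectation under one of the laws. This is a Duhamel/perturbation formula obtained by interpolating between the two generators, whose difference acts only on $k$-fold broadcast transitions.

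The main obstacle is to show that the integral is nonzero, and then to handle the expectation under the interpolated law. First, $p-\phi(C)>0$ for any configuration $C$ with $\ge2$ particles and $p\in(0,1)$, because $N^C_\infty\ge1$ with positive probability of being $\ge2$ by transience ($d\ge3$). Second, the occupation term is positive, since at time $0$ the system starts in $B\subset N_0$. The integrand is therefore nonnegative and strictly positive near $t=0$, and finiteness follows from the finite collision property. To avoid mixing laws, we interpolate $\Lambda_s=(1-s)\Lambda_1+s\Lambda_2$ after normalisation. This is linear in the rates, so $\frac{d}{ds}\phi_{\Lambda_s}(B)$ is given by the same formula under $\Lambda_s$, with constant sign. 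Integrating over $s\in[0,1]$ yields $\phi_{\Lambda_1}(B)\neq\phi_{\Lambda_2}(B)$. The sign is determined by $\lambda^{(k)}_{\Lambda_1}-\lambda^{(k)}_{\Lambda_2}$, and all lower moments agree along the whole path.
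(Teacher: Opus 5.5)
\textbf{Gap in the rate computation.} The problem is your first-step claim that, apart from the full merger, every rate involves only $\lambda^{(j)}$ with $j<k$. Take $\mathbf y$ with $k$ distinct particles in $N_x$. The transition in which \emph{exactly} the nonempty set $B'\subseteq S(\mathbf y)$ jumps to $x$ has rate $\int q^{|B'|-1}(1-q)^{k-|B'|}\,\Lambda(\mathrm dq)$. Expanding $(1-q)^{k-|B'|}$ shows this rate contains $(-1)^{k-|B'|}\lambda^{(k)}_\Lambda$. Only the aggregated rates $\sum_{B''\supseteq B'}r_\Lambda(\mathbf y,x,B'')=\lambda^{(|B'|)}_\Lambda$ are lower moments, and these are not rates of distinct transitions. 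Hence $r_{\Lambda_1}-r_{\Lambda_2}=(-1)^{k-|B'|}(\lambda^{(k)}_{\Lambda_1}-\lambda^{(k)}_{\Lambda_2})$ for \emph{every} nonempty $B'$. The correct weight in your Duhamel integral is therefore not $p-\phi(\xi_t)$ but the alternating difference
\[
\sum_{B'\subseteq S}(-1)^{k-|B'|}\,\phi_{\Lambda}\bigl(\mathbf y^{B'\to x}\bigr)
=\mu_{p,\Lambda}\bigl(\eta(x)=1,\ \eta\equiv0\text{ on }S\bigr)+(-1)^k\,\mu_{p,\Lambda}\bigl(\eta(x)=0,\ \eta\equiv1\text{ on }S\bigr),
\qquad S=S(\mathbf y).
\]
This is precisely the term $O_{\mathbf x}$ of Lemma~\ref{lemma_diffL12}. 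For even $k$ it is positive, and your scheme survives as a dynamic version of case~1 of Lemma~\ref{lemma_FCP_distinct}. For odd $k$ and $p\neq\frac12$ it has the fixed sign of $\frac12-p$, as in case~2. But for odd $k$ and $p=\frac12$ it vanishes identically, because the dynamics commutes with the global flip $\eta\mapsto 1-\eta$ and $\nu_{1/2}$ is flip-invariant.

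\textbf{Why this is fatal, and what is missing.} The failure is not specific to your test set; it kills every test set of size $\kappa$. When $\kappa$ is odd and $p=\frac12$, you get $(\mathcal G^{(\kappa)}_{\Lambda_1}-\mathcal G^{(\kappa)}_{\Lambda_2})\phi_{\Lambda_1}\equiv0$ on $V^\kappa$, so $\phi_{\Lambda_1}$ is $\mathcal G^{(\kappa)}_{\Lambda_2}$-harmonic. Since $|\phi_{\Lambda_1}-\widehat\nu_{1/2}|\le 2g^{(\kappa)}_{\mathrm{break}}$, Lemma~\ref{lemma:lim_g_Y_t} for the $\Lambda_2$-system then forces $\phi_{\Lambda_1}=\phi_{\Lambda_2}$ on all of $V^\kappa$: the $\kappa$-point functions of the two measures coincide.

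The missing idea is to pass to a $(\kappa+1)$-point function and use the geometry of $\Z^d$, which is what the paper does via case~4 of Lemma~\ref{lemma_FCP_distinct}. Take $\mathbf x=(0,z_1,\dots,z_\kappa)$ with distinct $z_i\in N_0$. Since $\kappa\ge3$ and no vertex other than $0$ is adjacent to three neighbours of $0$, hypothesis~\eqref{cond_lemma_L12} holds and $O_{\mathbf x}=0$. Moreover $\int I_{\mathbf x}\,d\mu_{p,\Lambda_1}=\mu_{p,\Lambda_1}(\eta(0)=1,\ \eta(z_i)=0\ \forall i)>0$. Hence $\int(\mathcal L_{\Lambda_1}-\mathcal L_{\Lambda_2})D_{\mathbf x}\,d\mu_{p,\Lambda_1}\neq0$, so $\mu_{p,\Lambda_1}$ is not $\mathcal L_{\Lambda_2}$-stationary. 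This static criterion needs a sign at a single configuration, whereas your formula needs a constant sign along the whole trajectory.

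A warning sign was that your argument uses nothing about $\Z^d$ beyond transience. If correct, it would also settle, for $\alpha\equiv\frac12$, the general-graph question that the paper leaves open.
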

Thus, on these graph classes, the stationary measure completely determines the moment profile of the broadcasting mechanism. Whether the same equivalence holds on every graph with bounded degrees and the finite collision property remains open.

Theorems~\ref{thm:distinctness_trees} and~\ref{thm:distinctness_Zd} 
are proved in Section~\ref{ss_distinctness}.

\medskip
For our final result, we obtain continuity of the stationary measures with respect to the broadcasting measure.
\begin{theorem}
\label{thm:continuity}
Let $G$ be a graph with bounded degrees. 
Let \(\alpha\in\mathscr H\), and let \((\Lambda_n)_{n\ge1}\), \(\Lambda\) be finite measures on \([0,1]\). Then
$$ \mathsf{m}(\Lambda_n)\to \mathsf{m}(\Lambda) \quad \text{ implies } \quad \mu_{\alpha,\Lambda_n} \Longrightarrow \mu_{\alpha,\Lambda}. $$
In particular, the conclusion holds whenever \(\Lambda_n/\Lambda_n[0,1]\) converges weakly to \(\Lambda/\Lambda[0,1]\).    
\end{theorem}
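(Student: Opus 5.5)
We plan to argue entirely on the dual side, using the moment duality of Lemma~\ref{lemma:duality} with the system of coordinated coalescing random walks. By Lemma~\ref{lemm:m_imply_mu}, after a linear time change we may assume \(\Lambda_n[0,1]=\Lambda[0,1]=1\). Then the broadcast rates are affine functions of the moments: \(c_\Lambda(x,k)=\sum_{j}\binom{\deg(x)-k}{j}(-1)^j\lambda^{(k+j)}_\Lambda\) after expanding \((1-q)^{\deg(x)-k}\). So \(\mathsf m(\Lambda_n)\to\mathsf m(\Lambda)\) implies \(c_{\Lambda_n}(x,k)\to c_\Lambda(x,k)\) uniformly in \(x\) and \(k\le\Delta\), because there are only finitely many degree values. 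Since cylinder events determine weak limits on \(\{0,1\}^V\), it suffices to show that for every finite \(F\subseteq V\),
\[
\mu_{\alpha,\Lambda_n}(\eta\equiv 1 \text{ on } F)\to\mu_{\alpha,\Lambda}(\eta\equiv 1\text{ on }F).
\]
By duality and Theorem~\ref{thm:classification_general}(1), the left side equals \(\lim_{t\to\infty}E^{\Lambda_n}_F\bigl[\prod_{y\in\xi_t}\alpha(y)\bigr]\), where \(\xi_t\) is the coordinated coalescing system started from \(F\).

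Next we write \(\prod_{y\in\xi_t}\alpha(y)\) via a martingale. Take \(Z_t=\prod_{y\in\xi_t}\alpha(y)\). When there is no coalescence, harmonicity of \(\alpha\) and the fact that particles move marginally as simple random walks (Lemma~\ref{lemma:CCRW_properties}) make \(Z\) behave like a product of independent martingales. The defect comes only from coalescence events and from correlated jumps, and these occur only when two particles are at graph distance at most \(2\). We will therefore split
\[
E^{\Lambda_n}_F[Z_t]=E^{\Lambda_n}_F[Z_{T}]+\text{(error after time }T),
\]
for a large fixed \(T\). We plan to control the error uniformly in \(n\) using the coupling with independent random walks announced in the introduction. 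Under that coupling, the expected number of times after \(T\) that some pair of particles comes within distance \(2\) is bounded by a quantity that depends on \(\Lambda_n\) only through the jump rate \(\Lambda_n[0,1]=1\). In the finite collision case this quantity tends to \(0\) as \(T\to\infty\) uniformly in \(n\). In the infinite collision case both limits equal \(\alpha\)-weighted mixtures of \(\delta_{\mathbf 0},\delta_{\mathbf 1}\): the system coalesces to one particle almost surely, and the limit is \(\lim_t E_F[\alpha(X_t)]\) for a single walk. We expect these limits to be \(\alpha(x)\) independently of \(\Lambda\), by harmonicity and the fact that the single-particle marginal does not depend on \(\Lambda\). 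For each fixed \(T\), the term \(E^{\Lambda_n}_F[Z_T]\) converges to \(E^\Lambda_F[Z_T]\). The reason is that the dual of \(|F|\) particles, restricted to a large finite ball with high probability up to time \(T\) (using bounded rates), is a finite-state Markov chain whose rates converge, so its time-\(T\) law converges.

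The main obstacle is the uniform-in-\(n\) tail estimate. Concretely, we need a bound of the form \(\sup_n |E^{\Lambda_n}_F[Z_\infty]-E^{\Lambda_n}_F[Z_T]|\le \varepsilon(T)\to0\). We would try to derive it as follows. By the strong Markov property, \(|E[Z_\infty]-E[Z_T]|\) is at most the probability that some pair of particles, alive at time \(T\), later has an interaction event. Under the coupling with independent walks this probability is dominated by \(\sum_{\text{pairs}}P(\text{independent walks come within distance }2\text{ after }T)\), up to a constant depending only on \(\Delta\) and \(|F|\). The hard point is to justify that the coupling's domination constant does not depend on the moments. We would also handle both collision regimes, or, as a fallback, treat the infinite collision case separately by the consensus argument above.
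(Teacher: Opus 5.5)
\textbf{Where the argument has a gap.} Your decomposition matches the paper's. For each $n$ and $T$ you bound $|\widehat\mu_{\alpha,\Lambda_n}(\mathbf x)-E_{\mathbf x}\widehat\nu_\alpha(\mathbf Y^{\Lambda_n}_T)|$ by $2E_{\mathbf x}g^{(k)}_{\mathrm{break}}(\mathbf Y^{\Lambda_n}_T)$, as in \eqref{ineq_mu_nu_alpha_g}. You then treat the fixed-$T$ term by convergence of the rates; your finite-ball argument does the job of the paper's matched-clock coupling. The gap is the step you flag yourself, and the route you propose for it does not work.

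The coupling of Section~\ref{ss:part1_thm_sm} identifies the CCRW with independent walks only up to $\taubreak$. After the first interaction the two systems are driven by different arrow systems, so the positions $\xi^{\Lambda_n}_T$ are no longer tied to the independent walks started from $F$. Nothing in the coupling bounds $E^{\Lambda_n}_F[g^{(k)}_{\mathrm{break}}(\xi_T)]$ by $\sum_{\text{pairs}}P(\text{independent walks from }F\text{ come within distance }2\text{ after }T)$, even up to a constant. A pathwise version fails outright. Take $\Lambda=\delta_1$ and two particles at distance $2$ with one common neighbour. They can reach that neighbour only by a joint jump, at rate $1$. An inclusion $S(\xi_t)\subseteq S(\mathbf X_t)$ would force that joint jump whenever either independent walk jumps there, i.e.\ at rate $2$.

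The finite collision property is also not what controls this tail. Lemma~\ref{lemma:lim_g_Y_t} holds on every bounded-degree graph. The reason is that each time two dual particles come within distance $2$, they coalesce before reaching distance $3$ with probability at least $\epsilon(\Lambda)$ from \eqref{away_from_0_bd}. So no case split between collision regimes is needed.

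\textbf{How to close it.} The paper makes this constant uniform in $n$: $\epsilon(\Lambda_n)\ge\max\{\lambda^{(2)}_{\Lambda_n}/(2\Delta),((1-\lambda^{(2)}_{\Lambda_n})/(2\Delta))^2\}$, which is bounded below independently of $n$. This gives $\lim_{T\to\infty}\sup_n E_{\mathbf x}g^{(k)}_{\mathrm{break}}(\mathbf Y^{\Lambda_n}_T)=0$.

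With your own ingredients, uniformity is not even needed. The function $g^{(k)}_{\mathrm{break}}$ is defined through independent walks only, so it does not depend on $n$, and it takes values in $[0,1]$. Your fixed-$T$ convergence in law of $\xi^{\Lambda_n}_T$ therefore also gives $E_{\mathbf x}g^{(k)}_{\mathrm{break}}(\mathbf Y^{\Lambda_n}_T)\to E_{\mathbf x}g^{(k)}_{\mathrm{break}}(\mathbf Y^{\Lambda}_T)$. Hence $\limsup_n|\widehat\mu_{\alpha,\Lambda_n}(\mathbf x)-\widehat\mu_{\alpha,\Lambda}(\mathbf x)|\le 4E_{\mathbf x}g^{(k)}_{\mathrm{break}}(\mathbf Y^{\Lambda}_T)$. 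The right-hand side tends to $0$ as $T\to\infty$ by Lemma~\ref{lemma:lim_g_Y_t} applied to the single measure $\Lambda$.
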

Theorem~\ref{thm:continuity} is proved in Section~\ref{ss_continuity}.

\medskip
The proofs of these results rely on a single coupling idea introduced in Section~\ref{ss:part1_thm_sm}. We couple, for each $k$, the
$k$-particle coordinated coalescing random walk with a system of $k$ independent simple
random walks so that the two systems agree exactly until some pair of particles becomes
close enough to interact through a common broadcast; from that point on, the two systems
may be driven apart. Because this coupling only breaks near an interaction, its
persistence can be controlled entirely through the collision behaviour of ordinary
random walks, which allows us to transfer classical estimates for independent random
walks to the coordinated system. This coupling underpins our proofs for both the existence and the
classification of the measures $\mu_{\alpha,\Lambda}$, as well as the continuity result
described above.

\section{The broadcast voter model}
\label{s:TheBVModel}

In this section, we first we prove Theorem~\ref{thm:existence} in Section~\ref{ss:proof_existence}, 
and then we provide a graphical Poissonian construction of the model in Section~\ref{ss_alternative_description}. 
In Section~\ref{ss:CCRW}, we define and study systems of coordinated coalescing random walks, which are shown in Section~\ref{ss_duality} to be dual to the $\Lambda$-broadcast voter model.

\subsection{Proof of Theorem~\ref{thm:existence}}
\label{ss:proof_existence}

The proof will consist of two observations concerning the rates of the process. 
The first shows that the total rate of broadcasts originating from any fixed vertex is finite, with bounded degrees making them uniformly bounded. 
The second shows how Assumption (BD) allows us to verify directly a standard sufficient condition for the existence of the corresponding interacting particle system.

The total rate of nontrivial broadcasts originating from~$x$ is 
\begin{equation}\label{eq_total_broadcast_rate}
\gamma_\Lambda(x) := \sum_{ \substack{A \subseteq N_x \\ A \neq \varnothing } } c_\Lambda(x,|A|) = \int_{[0,1]} \sum_{ \substack{A \subseteq N_x \\ A \neq \varnothing } } q^{|A|-1}(1-q)^{|N_x|-|A|} \; \Lambda(\mathrm{d}q) = \int_{[0,1]} \phi_{|N_x|}(q)  \; \Lambda(\mathrm{d}q),
\end{equation}
where
$$ \phi_n(q) = \left\{ \begin{array}{ll}
    \frac{1-(1-q)^n}{q}, & q > 0 \\[2mm]
    n, & q= 0, 
\end{array} \right.  $$
Using the finite geometric series identity
\begin{equation}\label{geometric_serires}
    \frac{1-(1-q)^n}{q} = \sum^{n-1}_{j = 0} (1-q)^j \leq n, \qquad q \in (0,1],
\end{equation}
we obtain
$$ \gamma_\Lambda(x) \leq \Lambda[0,1] \cdot |N_x|. $$
In particular, every vertex has a finite effective broadcast rate, 
and if the degrees are uniformly bounded then so are the rates.

To justify the existence of the process, we use the general construction of interacting particle systems with simultaneous updates from~\cite[Chapter~1, Theorem~3.9]{Liggett85} (note that this is not a spin system). 
In the notation of that result, \(c_T(\eta,\cdot)\) denotes the transition-rate kernel associated with an update of the finite set \(T\), and
$$ C_T:=\sup_\eta c_T(\eta,\{0,1\}^T) $$
its maximal total rate. For the~$\Lambda$-broadcast voter model,
$$ C_T=\sum_{\substack{x\in V\\T\subseteq N_x}}c_\Lambda(x,|T|). $$
Hence, rearranging the sums,
\begin{align*}
\sum_{T\ni z} C_T = \sum_{x \sim z} \sum_{ \substack{ T \subseteq N_x \\ z \in T } } c_\Lambda(x,|T|) & = \sum_{ x \sim z } \sum_{ \substack{T \subseteq N_x \\ x \in T } } \int_{[0,1]} q^{|T|-1}(1-q)^{|N_x|-|T|} \; \Lambda(\mathrm{d}q)\\[2mm]
& = \sum_{ x \sim z } \int_{[0,1]} \sum_{ B \subseteq N_x \setminus \{z\} } q^{|B|}(1-q)^{|N_x \setminus \{z\}|-|B|} \; \Lambda(\mathrm{d}q) \\[2mm] 
& = \Lambda[0,1] \cdot |N_z|, 
\end{align*}
where the last equality follows from the binomial identity. 
Thus~$\sup_{z\in V}\sum_{T\ni z}c_T < \infty $, i.e., condition~(3.3) of~\cite[Chapter~I]{Liggett85} is satisfied. Moreover, changing the opinion at a site~\(u\) can affect~$c_T(\eta, \cdot)$ only when~\(T\subseteq  N_u  \), implying that $c_T(u)$ as defined in \cite[Chapter~I]{Liggett85} is bounded by $2 C_T \1_{\{T \subseteq N_u\}}$. Thus, by the computation above, the parameter $M$ from \cite[Chapter~I, Theorem~3.9]{Liggett85} is bounded by $2 \Lambda[0,1] \max_{z \in V} |N_z|^2 < \infty$, verifying the dependence condition (3.8). Thus the hypotheses of~\cite[Chapter~I, Theorem~3.9]{Liggett85} are satisfied, and the \(\Lambda\)-broadcast voter model is well-defined.
\hfill $\qed$

\subsection{Poissonian description of the dynamics}\label{ss_alternative_description}
Motivated by the Poissonian construction of the~$\Lambda$-coalescent, we give an alternative description of the broadcast voter dynamics that clarifies the particular form of the rates~\(c_\Lambda\) from~\eqref{Markov_pre_dyn}. For readers familiar with~\(\Lambda\)-coalescents, this shows how the usual~\(q\)-merger mechanism naturally adapts to the present setting; for readers unfamiliar with this theory, it provides a self-contained account of the underlying probabilistic mechanism. We refer again to~\cite{Berestycki09} for background on~\(\Lambda\)-coalescents and their Poissonian construction.

Recall the total rate of broadcasts~$\gamma_\Lambda$ from~\eqref{eq_total_broadcast_rate} and the function~$\phi_n$. Since~$G$ is connected (with at least two vertices) and~$\Lambda$ is non-trivial, we have~$\gamma_\Lambda(x)>0$ for every~$x\in V$. Define the probability measure
\begin{equation}\label{eq_tilted_Lambda}
\widehat\Lambda_x(\mathrm dq) := \frac{\phi_{|N_x|}(q)}{\gamma_\Lambda(x)} \; \Lambda(\mathrm dq), \qquad x \in V.
\end{equation}
The broadcasts originating from~$x$ may then be generated as follows. Independently for each~$x \in V$, let a Poisson clock ring at rate~$\gamma_\Lambda(x)$. At each ring, independently of the past, sample a parameter~$q$ according to~$\widehat\Lambda_x$; 
\begin{itemize}
\item if~$q>0$, select each vertex of~$N_x$ independently with probability~\(q\), conditional on selecting at least one vertex;
\item if~$q=0$, select one vertex of~$N_x$ uniformly at random. 
\end{itemize}
All selected vertices then simultaneously adopt the opinion of~\(x\).

The rule at~\(q=0\) is the natural limiting case of the construction for~\(q>0\). Indeed, as~\(q\downarrow0\), a collection of independent Bernoulli-\(q\) selections, conditioned on being nonempty, consists with probability tending to one of a single uniformly chosen neighbor.

To verify that this description agrees with~\eqref{Markov_pre_dyn}, fix a nonempty set~\(A\subseteq N_x\). Conditional on a sampled value~\(q>0\), the probability that precisely the vertices in~\(A\) are selected is
\[ \frac{q^{|A|}(1-q)^{|N_x|-|A|}}{1-(1-q)^{|N_x|}}. \]
Hence the contribution from~$q>0$ to the rate of broadcasts from~$x$ precisely to~$A$ is
$$ \gamma_\Lambda(x) \cdot \int_{(0,1]} \frac{q^{|A|}(1-q)^{|N_x|-|A|}} {1-(1-q)^{|N_x|}} \; \widehat\Lambda_x(\mathrm dq) = \int_{(0,1]}
q^{|A|-1}(1-q)^{|N_x|-|A|} \;\Lambda(\mathrm dq).$$
When~$q=0$, only singleton sets can be selected. 
If~$A=\{y\}$, its rate coming from the possible atom at zero is
\[ \gamma_\Lambda(x) \cdot \widehat\Lambda_x({0}) \cdot \frac{1}{|N_x|}
= \Lambda({0}), \]
whereas sets with~$|A| \ge 2$ receive no contribution. Combining the two cases gives
\[ c_\Lambda(x,|A|) = \int_{[0,1]} q^{|A|-1}(1-q)^{|N_x|-|A|} \; \Lambda(\mathrm dq),\]
as required.

This description also gives a natural graphical representation of the process. At a broadcast event from~\(x\) to~\(A\), draw arrows from~\(x\) to every vertex of~\(A\). Read forward in time, all vertices in~\(A\) simultaneously adopt the opinion of~\(x\). Read backward in time, a path located at a vertex of~\(A\) follows the corresponding arrow to~\(x\). Hence, several backward paths may jump simultaneously to the same vertex and coalesce. This graphical picture motivates the dual process introduced in the next subsection.

\subsection{Coordinated coalescing random walks}
\label{ss:CCRW}
As is standard in the study of voter models, we describe the flow of opinions backwards in time by means of coalescing random walks. In the present setting, the graphical representation above leads to a system in which several walks may jump simultaneously in response to the same broadcast event. We call this process \emph{coordinated coalescing random walks}. We now formalize its dynamics, beginning with some notation.

Given $z,x \in V$ and~$A \subseteq V$, we define 
\begin{equation*}
     z^{A \to x} := \left\{ \begin{array}{cc}
        x & \text{if } z \in A, \\[2mm]
        z & \text{otherwise}
    \end{array} \right. 
\end{equation*}
In words, we move~$z$ to~$x$ only if~$z \in A$. If~$A = \{y\}$, we simply write~$z^{y \to x}$. Moreover, for any integer~$k \in \N$ and vector $\mathbf{y} = (y_1,\dots, y_k) \in V^k$ we write~$ \mathbf{y}^{A \to x} := (y^{A \to x}_1, \dots, y^{A \to x}_k).$ Thus,~$\mathbf y^{B\to x}$ is obtained by moving to~$x$ all coordinates whose current location belongs to~$B$, leaving the remaining coordinates unchanged.

Given~$k \in \N$ and a finite measure~$\Lambda$ on~$[0,1]$, we define the~$k$-particle~$\Lambda$-\emph{coordinated coalescing random walks} ($\Lambda$-CCRWs) as the continuous-time Markov chain~$(Y^1_t, \dots, Y^k_t)_{t \ge 0}$ with state space~$V^k$ and generator
\begin{equation}\label{gen_Lambda_CCRW}
\mathcal G^{(k)}_\Lambda f(\mathbf y) = \sum_{x \in V} \sum_{ \substack{ B \subseteq N_x \cap S(\mathbf y) \\ B \neq \varnothing } } r_\Lambda( \mathbf y,x,B ) \cdot \big[ f(\mathbf y^{B \to x}) - f(\mathbf y) \big], \quad \mathbf y = (y_1,\dots,y_k),
\end{equation}
where~$S(\mathbf y) := \{y_1,\dots,y_k\}$ denotes the set of occupied sites, and
\begin{equation}\label{eq_CCRW_rate}
r_\Lambda( \mathbf y,x,B ) := \int_{[0,1]} q^{|B|}(1-q)^{|N_x \cap S(\mathbf y)| - |B|} \; q^{-1} \Lambda(\mathrm d q).
\end{equation}

The following calculation justifies the choice of the rates in~\eqref{eq_CCRW_rate} by relating them directly to the broadcast dynamics. Fix a site~$x \in V$, a vector~$\mathbf y \in V^k$, and a nonempty set~$B \subseteq N_x \cap S(\mathbf y)$. Then
\begin{align}\label{c_Lambda_r_Lambda}
\sum_{\substack{A\subseteq N_x  \\ A \cap S(\mathbf y)=B}}
c_\Lambda(x,|A|) & = \sum_{C\subseteq N_x \setminus S(\mathbf y)}
c_\Lambda(x,|B|+|C|) \nonumber \\[2mm]
&= \int_{[0,1]} q^{|B|-1} (1-q)^{|N_x\cap S(\mathbf y)|-|B|}
\hspace{-10pt}
\sum_{C\subseteq N_x \setminus S(\mathbf y)}
q^{|C|} (1-q)^{|N_x\setminus S(\mathbf y)|-|C|}
\; \Lambda(\mathrm dq) \nonumber \\[2mm]
& = r_\Lambda(\mathbf y,x,B), 
\end{align}
where the last equality follows from the binomial identity.

The condition~\(A\cap Y(\mathbf y)=B\) means that, among the sites occupied by the~\(\Lambda\)-CCRWs, precisely those in~\(B\) are reached by the broadcast from~\(x\). Whether the broadcast also reaches neighbors not occupied by the walks is irrelevant to their evolution, and all such possibilities are accounted for in the sum above. Consequently, a transition~\(\mathbf y\to\mathbf y^{B\to x}\) occurs whenever a broadcast from~\(x\) reaches precisely the occupied neighboring sites in~\(B\). All walks at these sites then jump simultaneously to~\(x\) and coalesce there. In the case~\(|B|=1\), the transition corresponds to a jump of a single walk; if the destination~\(x\) is already occupied by another walk, the two coalesce exactly as in the standard system of coalescing random walks.

Having clarified the structure of the transition rates, we now record some basic properties of the~$\Lambda$-CCRWs.
\begin{lemma}\label{lemma:CCRW_properties}
Let~$G$ be a graph with bounded degrees. The family of~$\Lambda$-CCRWs satisfies the following properties.
\begin{enumerate}
    \item For every~$k \in \N$, the~$k$-particle process is nonexplosive.
    \item The family is projectively consistent: any subcollection of coordinates of a~$k$-particle~$\Lambda$-CCRW evolves as the corresponding lower-dimensional~$\Lambda$-CCRW.
    \item Each coordinate is a continuous-time simple random walk on~$G$, with jump rate~$\Lambda[0,1]$ along each incident edge.
\end{enumerate}
\end{lemma}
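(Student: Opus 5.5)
The plan is to work directly with the generator \eqref{gen_Lambda_CCRW} and the rate identity \eqref{c_Lambda_r_Lambda}. All three claims follow from computing total outgoing rates and checking how the rates project onto sub-collections.

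\emph{Nonexplosion.} I will bound the total jump rate out of any state $\mathbf y\in V^k$ uniformly. For fixed $x$, summing $r_\Lambda(\mathbf y,x,B)$ over nonempty $B\subseteq N_x\cap S(\mathbf y)$ gives
\[
\int_{[0,1]}\phi_{|N_x\cap S(\mathbf y)|}(q)\,\Lambda(\mathrm dq)\le \Lambda[0,1]\,|N_x\cap S(\mathbf y)|,
\]
by the same geometric-series computation as in \eqref{geometric_serires}. Only vertices $x$ adjacent to some occupied site contribute, and there are at most $k\Delta$ of them. Hence the total rate is at most $k\Delta^2\Lambda[0,1]$. A continuous-time Markov chain with uniformly bounded jump rates is nonexplosive, since its jump times are dominated by a Poisson process.

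\emph{Projective consistency.} By permutation symmetry of the generator, it suffices to remove the last coordinate. The plan is to verify Dynkin's criterion for lumpability. Let $\pi:V^k\to V^{k-1}$ drop the last coordinate and let $f=g\circ\pi$. I will show
\[
\mathcal G^{(k)}_\Lambda (g\circ\pi)(\mathbf y)=(\mathcal G^{(k-1)}_\Lambda g)(\pi\mathbf y).
\]
Write $S=S(\mathbf y)$ and $S'=S(\pi\mathbf y)\subseteq S$. Fix $x$ and a nonempty $B'\subseteq N_x\cap S'$. The transitions of $\mathbf y$ that move the first $k-1$ coordinates according to $B'$ are exactly those with $B\cap S'=B'$. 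These are $B=B'$ or $B=B'\cup\{y_k\}$, the latter when $y_k\in N_x\setminus S'$. Summing their rates is the binomial identity already used in \eqref{c_Lambda_r_Lambda}, with one extra vertex:
\[
\int q^{|B'|-1}(1-q)^{|N_x\cap S'|-|B'|}\big[(1-q)+q\big]\Lambda(\mathrm dq)=r_\Lambda(\pi\mathbf y,x,B').
\]
The remaining transitions have $B\cap S'=\varnothing$, so $B=\{y_k\}$. They leave $\pi\mathbf y$ unchanged and therefore contribute zero to $\mathcal G^{(k)}_\Lambda(g\circ\pi)$. When $y_k\in S'$, $S=S'$ and the correspondence is trivially the identity. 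Combined with nonexplosion (so that the martingale problems are well posed), the identity shows that $\pi(\mathbf Y_t)$ is a Markov chain with generator $\mathcal G^{(k-1)}_\Lambda$. Iterating handles any sub-collection.

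\emph{Marginals.} By the previous part, each coordinate is a $1$-particle $\Lambda$-CCRW. For $k=1$ with the walk at $y$, the only possible transitions are $B=\{y\}$ with $x\sim y$. Their rate is $r_\Lambda(y,x,\{y\})=\int q^0(1-q)^0\,\Lambda(\mathrm dq)=\Lambda[0,1]$, since $|N_x\cap\{y\}|=1$. This is simple random walk with rate $\Lambda[0,1]$ per incident edge.

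The main obstacle is the bookkeeping in the consistency step. One must handle the cases where $y_k$ coincides with another particle's site or lies outside $N_x$, and make sure the generator identity is legitimate for unbounded $V$. That legitimacy comes from the bounded rates.
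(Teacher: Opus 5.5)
Your proposal is correct and follows essentially the same route as the paper. It uses a uniform bound on the total jump rate for nonexplosion: you get $k\Delta^2\Lambda[0,1]$, while the paper gets the sharper $k\Delta\Lambda[0,1]$ via $\sum_x |N_x\cap S(\mathbf y)|=\sum_{z\in S(\mathbf y)}\deg(z)$. It checks consistency with the same binomial-identity rate computation, and identifies the marginal through $r_\Lambda(y,x,\{y\})=\Lambda[0,1]$.

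There is one small difference in the consistency step. You remove one coordinate at a time and appeal explicitly to Dynkin's lumpability criterion, whereas the paper sums over the extra moved set $C$ for an arbitrary subcollection at once. Your explicit use of Dynkin's criterion makes rigorous the passage from the rate identity to the Markov property of the projected process, which the paper leaves implicit and supports only with the graphical-construction heuristic.
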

\begin{proof}
Fix a vector~$\mathbf y \in V^k$. The total rate of transitions associated with broadcasts originating from \(x\) with~$N_x \cap S(\mathbf y) \neq \varnothing$ is
$$ \sum_{ \substack{ B \subseteq N_x \cap S(\mathbf y) \\ B \neq \varnothing } } r_\Lambda( \mathbf y,x,B ) = \int_{[0,1]} \phi_{|N_x \cap S(\mathbf y)|}(q) \; \Lambda(\mathrm{d}q) \stackrel{\eqref{geometric_serires}} \leq \Lambda[0,1] \cdot |N_x \cap S(\mathbf y)|, $$
where the equality follows from the same calculation done in~\eqref{eq_total_broadcast_rate}. Summing over~$x$, we have that the total rate from~$\mathbf y$ is bounded by
$$ \sum_{x \in V} \Lambda[0,1] \cdot |N_x \cap S(\mathbf y)| = \sum_{x \in S(\mathbf y)} \Lambda[0,1] \cdot |N_x| \leq k \Lambda[0,1] \cdot \sup_{x \in V} \deg(x).  $$
Therefore, by Theorem~2.7.1(ii) of~\cite{Norris97}, the \(k\)-particle \(\Lambda\)-CCRW is nonexplosive. 

Projective consistency follows naturally from the graphical construction. The broadcast marks act on sites rather than on labelled walks: removing some of the walks does not alter how the remaining ones respond to any broadcast event. Hence any subcollection of coordinates is governed by the same graphical construction and has the law of the corresponding lower-dimensional~\(\Lambda\)-CCRW. This can also be seen directly from the transition rates. Let~$\mathbf y' \in V^m$,~$m \leq k$, be obtained by retaining some of the coordinates of~\(\mathbf y\), and fix a site~$x$ and nonempty set~$B\subseteq N_x\cap S(\mathbf y').$ 
In the full system of $k$ coordinated coalescing random walks, a transition inducing~\(\mathbf y'\to(\mathbf y')^{B\to x}\) may additionally move any subset
$$ C\subseteq N_x\cap\bigl(S(\mathbf y)\setminus S(\mathbf y')\bigr). $$
Therefore its total rate is
\begin{align*}
\sum_{C\subseteq N_x\cap(S(\mathbf y)\setminus S(\mathbf y'))} r_\Lambda(\mathbf y,x,B\cup C) &= \int_{[0,1]} q^{|B|-1} (1-q)^{|N_x\cap S(\mathbf y')|-|B|} \\ &\quad\times 
\hspace{-10pt}
\sum_{C\subseteq N_x\cap(S(\mathbf y)\setminus S(\mathbf y'))} q^{|C|} (1-q)^{|N_x\cap(S(\mathbf y)\setminus S(\mathbf y'))|-|C|} \,\Lambda(\mathrm dq) \\ &= r_\Lambda(\mathbf y',x,B), \end{align*} 
where the last equality follows from the binomial identity. This proves projective consistency.

Finally, by projective consistency, it is enough to identify the one-particle process. Suppose that the walk is at \(y\). For every neighbor \(x\sim y\), the only possible nonempty subset of~$N_x\cap\{y\} $ is~\(B=\{y\}\). Hence
$$ r_\Lambda(y,x,\{y\}) = \int_{[0,1]} q^{0}(1-q)^0\,\Lambda(\mathrm dq) = \Lambda[0,1]. $$
Thus the one-particle process jumps from \(y\) to each of its neighbors at rate \(\Lambda[0,1]\), and is therefore a continuous-time simple random walk on \(G\), run at speed \(\Lambda[0,1]\). By projective consistency, each coordinate of the \(k\)-particle \(\Lambda\)-CCRW has the same marginal law. We have completed the proof.
\end{proof}

The projective consistency obtained above is reminiscent of the sampling consistency of~\(\Lambda\)-coalescents: restricting the system to a smaller collection of walks leaves the corresponding dynamics unchanged. At the level of the rates, both properties arise from the same mechanism: an additional particle may either participate in an event or not, and summing over these possibilities recovers the rate of the restricted system.

The single marginal statement also clarifies the relation with ordinary coalescing random walks. Each individual trajectory is a continuous-time simple random walk, and the usual coalescing mechanism is still present: a single walk may jump onto a site already occupied by another walk, in which case they coalesce. The broadcast dynamics add a second mechanism: several walks occupying neighbors of the same vertex may jump simultaneously to that vertex and coalesce in one coordinated event.



\subsection{Duality}\label{ss_duality}
The graphical representation introduced above suggests that the~$\Lambda$-CCRWs are the natural dual process of the broadcast voter model: when a broadcast is read backward in time, every walk located at a vertex reached by the broadcast jumps to the broadcasting vertex. In order to state such a relationship, we introduce some additional notation.

Given~$\mathbf{y} \in  V^k$ and~$\eta \in \{0,1\}^V$, we denote by~$P_{\mathbf y}$ (resp.~$P_\eta$) a probability measure under which~$(Y^1_t,\dots,Y^k_t)_t$ (resp.~$(\eta_t)_t)$ is defined and satisfies~$P_{\mathbf{y}}( (Y^1_0,\dots,Y^k_0)  = \mathbf{y} ) = 1$ (resp.~$P_\eta( \eta_0 = \eta )=1$ ). The expectation operator associated with this measure is denoted by~$E_{\mathbf y}$ (resp.~$E_\eta$).

\begin{lemma}\label{lemma:duality}
Let~$G$ be a graph with bounded degrees
Given $k \in \N$ and a finite measure~$\Lambda$ on~$[0,1]$, the~$\Lambda$-broadcast voter model~$(\eta_t)_t$ and the~$k$-particle~$\Lambda$-CCRW~$(Y^1_t,\dots,Y^k_t)_t$ are dual with respect to the function
\begin{equation}\label{Def_D_k_duality}
    D^{(k)}(\eta,\mathbf y) := \eta(y_1) \cdots  \eta(y_k), \quad \eta \in \{0,1\}^V, \; \mathbf y = (y_1,\dots,y_k) \in V^k,
\end{equation}
that is, for any configuration~$\eta \in \{0,1\}^V$ and vector~$\mathbf{y} \in V^k$,
\begin{equation}\label{duality_equality_raw}
E_\eta D^{(k)}(\eta_t,\mathbf y) = E_\mathbf{y} D^{(k)}( \eta, (Y^1_t,\dots,Y^k_t) ).
\end{equation}
\end{lemma}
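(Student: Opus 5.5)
The plan is to verify the generator identity
\[
\mathcal L_\Lambda D^{(k)}(\cdot,\mathbf y)(\eta) = \mathcal G^{(k)}_\Lambda D^{(k)}(\eta,\cdot)(\mathbf y)
\]
for all $\eta\in\{0,1\}^V$ and $\mathbf y\in V^k$. Then we upgrade it to the semigroup identity \eqref{duality_equality_raw} by a standard argument, e.g.\ \cite[Chapter~II, Theorem~3.42]{Liggett85} or a direct differentiation of $s\mapsto E\,D^{(k)}(\eta_s, \mathbf Y_{t-s})$ with independent processes. Note first that $D^{(k)}(\cdot,\mathbf y)$ depends only on the finitely many coordinates in $S(\mathbf y)$. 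It is therefore a local function, so $\mathcal L_\Lambda$ applies to it directly. Moreover, $D^{(k)}(\eta,\cdot)$ is bounded, and by Lemma~\ref{lemma:CCRW_properties} the $k$-particle chain is nonexplosive with total jump rate bounded by $k\Lambda[0,1]\Delta$, so its generator acts boundedly on bounded functions.

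For the generator computation, fix $x$ and a nonempty $A\subseteq N_x$, and set $B=A\cap S(\mathbf y)$. Since $\eta^{x\to A}(y_i)=\eta(x)$ if $y_i\in A$ and $\eta(y_i)$ otherwise, we get $\eta^{x\to A}(y_i)=\eta(y_i^{B\to x})$ for every $i$. Hence
\[
D^{(k)}(\eta^{x\to A},\mathbf y)=D^{(k)}(\eta,\mathbf y^{B\to x}).
\]
If $B=\varnothing$ the increment $f(\eta^{x\to A})-f(\eta)$ vanishes. Grouping the sum over $A$ in \eqref{Markov_pre_dyn} according to the value of $B$ then gives
\[
\mathcal L_\Lambda D^{(k)}(\cdot,\mathbf y)(\eta)=\sum_x\sum_{\varnothing\ne B\subseteq N_x\cap S(\mathbf y)}\Big(\sum_{A\subseteq N_x,\,A\cap S(\mathbf y)=B}c_\Lambda(x,|A|)\Big)\big[D^{(k)}(\eta,\mathbf y^{B\to x})-D^{(k)}(\eta,\mathbf y)\big].
\]
By \eqref{c_Lambda_r_Lambda} the inner sum equals $r_\Lambda(\mathbf y,x,B)$, which is exactly $\mathcal G^{(k)}_\Lambda D^{(k)}(\eta,\cdot)(\mathbf y)$. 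Only finitely many $x$ contribute (those with $N_x\cap S(\mathbf y)\neq\varnothing$), so all rearrangements are legitimate.

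The main obstacle is the passage from generators to semigroups, since $\mathcal L_\Lambda$ is only a pre-generator on local functions. I would use the route in Liggett's duality theorem: set $u(s)=E\,D^{(k)}(\eta_s,\mathbf Y_{t-s})$ with the two processes independent. Then compute $\frac{d}{ds}u(s)$ and show it is zero, using two facts:
\begin{itemize}
\item $S^\Lambda_s D^{(k)}(\cdot,\mathbf y)$ is differentiable with derivative $S^\Lambda_s\mathcal L_\Lambda D^{(k)}(\cdot,\mathbf y)$, because local functions lie in the core given by Theorem~\ref{thm:existence}.
\item The dual chain's Kolmogorov backward equation holds because its jump rates are bounded.
\end{itemize}
Uniform boundedness in $\mathbf y$ of $\mathcal L_\Lambda D^{(k)}(\cdot,\mathbf y)$, with bound $2k\Lambda[0,1]\Delta$, justifies exchanging derivative and expectation.

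As an alternative, one can bypass analytic issues entirely with the graphical construction of Section~\ref{ss_alternative_description}. On a finite time window the Poisson broadcast marks determine the dual paths backward from $\mathbf y$, and by \eqref{c_Lambda_r_Lambda} these paths have the law of the $\Lambda$-CCRW. Moreover, $\eta_t(y_i)=\eta(Y^i_t)$ holds pathwise, which gives \eqref{duality_equality_raw} at once. I would present the generator computation and mention this pathwise version as the more robust justification.
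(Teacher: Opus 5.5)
Your proposal is correct and follows the paper's proof. The paper likewise reduces the statement, via Theorem~3.42 of \cite{Liggett2010}, to the generator identity $\mathcal L_\Lambda D_{\mathbf y}(\eta)=\mathcal G^{(k)}_\Lambda D_\eta(\mathbf y)$, and verifies it as you do: it uses $\eta^{x\to A}(z)=\eta(z^{A\to x})$, groups the sum over $A$ by $B=A\cap S(\mathbf y)$, and applies \eqref{c_Lambda_r_Lambda}. Your extra remarks on the semigroup passage and the pathwise graphical argument are sound, but the paper does not spell them out.
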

\begin{proof}
We write~$D = D^{(k)}$ and~$D_{\eta}(\mathbf{y}) = D_{\mathbf{y}}(\eta) =D(\eta,\mathbf{y})$. Theorem~3.42 of~\cite{Liggett2010} states that a sufficient condition for~\eqref{duality_equality_raw} is to prove that
\begin{equation}\label{duality_to_proof}
\mathcal L_\Lambda D_{\mathbf{y}}(\eta) = \mathcal G_\Lambda^{(k)} D_\eta(\mathbf y) \quad \text{for every } \eta \in \{0,1\}^V \text{ and } \mathbf y \in V^k,
\end{equation}
where~$\mathcal L_\Lambda$ is the Markov pre-generator of the broadcast voter model~\eqref{Markov_pre_dyn} and~$\mathcal G^{(k)}_\Lambda$ is the generator~\eqref{gen_Lambda_CCRW}.
To prove \eqref{duality_to_proof},
we use the following observation:
\[ \eta^{x \to A}(z) = \eta(z^{A \to x}) \quad \text{for any } x,z \in V \text{ and } \eta \in \{0,1\}^V . \]
Fix a vertex~$x$, a vector~$\mathbf y = (y_1,\dots,y_k)$ and a configuration~$\eta$. Then, for any~$A \subseteq N_x$,
\begin{equation*}
D_\mathbf{y}(\eta^{x \to A}) = D_\eta(\mathbf{y}^{A \to x}) = D_\eta( \mathbf y^{A \cap S(\mathbf y) \to x} ).
\end{equation*}  
Hence,
\begin{align*}
\sum_{ \substack{ A \subseteq N_x \\ A \neq \varnothing } }c_\Lambda(x,|A|) & \cdot \big( D_\mathbf{y}(  \eta^{x \to A}) )  - D_\mathbf{y}(\eta) \big) \\[2mm]
& = \sum_{ \substack{A \subseteq N_x \\ S(\mathbf y) \cap A \neq \varnothing }}c_\Lambda(x,|A|) \cdot \big( D_\eta(\mathbf y^{A \cap S(\mathbf y) \to x}) ) - D_\eta (\mathbf y) \big) \\[2mm]
& = \sum_{ \substack{B \subseteq N_x \cap S(\mathbf y) \\ B \neq \varnothing }} \sum_{ \substack{A \subseteq N_x \\ A \cap S(\mathbf y) = B } } c_\Lambda(x,|A|) \cdot \big( D_\eta(\mathbf{y}^{B \to x}) ) - D_\eta(\mathbf y) \big) \\
& \hspace{-1.2mm} \stackrel{\eqref{c_Lambda_r_Lambda}} = \sum_{ \substack{B \subseteq N_x \cap S(\mathbf y) \\ B \neq \varnothing }} r_\Lambda(\mathbf y,x,B) \cdot \big( D_\eta(\mathbf{y}^{B \to x}) ) - D_\eta(\mathbf y) \big), 
\end{align*}
Finally, summing over~$x \in V$, we have~\eqref{duality_to_proof}. This completes the proof.
\end{proof}

\section{Stationary measures}
\label{s:stat_meas}
In this section, we use the duality developed in Section~\ref{ss_duality} to characterize the stationary measures of the \(\Lambda\)-broadcast voter model, 
proving Theorem~\ref{thm:classification_general} and Corollary~\ref{c:long_lattice}.
As in the classical voter model, the description is determined by the collision behavior of the random walks underlying the dual process. In Section~\ref{ss:part1_thm_sm}, we construct a coupling between systems of coordinated coalescing random walks and independent random walks and use it to construct a family of stationary measures indexed by bounded harmonic functions, proving part $1.$ of Theorem~\ref{thm:classification_general}. 
The coupling constructed in this section will be used throughout of the paper. 
Sections~\ref{ss:part2_thm_sm} and~\ref{ss:part3_thm_sm} then characterize the extremal stationary measures under the infinite and finite collision properties, proving parts $2.$ and $3.$ of Theorem~\ref{thm:classification_general} respectively.
The proof of Corollary~\ref{c:long_lattice} is given in Section~\ref{ss:proof_corollary}.

\subsection{Construction of the stationary measures}\label{ss:part1_thm_sm}
In this section, we prove the first assertion of Theorem~\ref{thm:classification_general}. The main tool is a coupling between the~\(\Lambda\)-CCRW and a system of independent random walks. Besides yielding the existence of the measures~\(\mu_{\alpha,\Lambda}\), this coupling will also be used in the proofs of the two collision regimes. 

For~$k \in \N$, let~$(\mathbf X_t)_{t \ge 0}$ with
$$ \mathbf X_t = (X^1_t,\dots, X^k_t) $$
be a system of~\(k\) independent continuous-time simple random walks on~\(G\), each jumping along every incident edge at rate~\(\Lambda[0,1]\). Its generator is
\begin{equation}\label{matrix_q_irw}
\mathcal G^{(k)}_{\Lambda,\mathrm{irw}} f( \mathbf x )  =  \sum^k_{i=1} \sum_{y \sim x_i} \Lambda[0,1] \cdot \left( f( \mathbf x^{y \to i}) - f( \mathbf x ) \right), \quad f:V^k \to \R
\end{equation}
where
$$ \mathbf x = (x_1,\dots,x_k) \qquad \text{and} \qquad \mathbf x^{y \to i} = (x_1, \dots, x_{i-1},y,x_{i+1},\dots,x_k). $$
By Lemma 2.1, each coordinate of the~\(k\)-particle~\(\Lambda\)-CCRW has precisely the same marginal dynamics as the corresponding coordinate of~\((\mathbf X_t)_{t\ge0}\). The difference between the two systems only becomes visible when two or more particles can interact through the same broadcast event. We now make this comparison precise. \\[2mm]

\noindent \textbf{Coupling.} Fix~$k \in \N$. We couple a system~\((\widehat{\mathbf X}_t)_{t\ge0}\) of~$k$ independent random walks with a~\(k\)-particle \(\Lambda\)-CCRW~\((\widehat{\mathbf Y}_t)_{t\ge0}\), both started from the same vector~\(\mathbf z\in V^k\), whose entries are initially assumed to be distinct. In fact, our coupling will allow the construction of all of these systems at the same time. 

Given a graph $G$ with bounded degrees and a finite measure $\Lambda$ on $[0,1]$,
recall the graphical representation from Section~\ref{ss_alternative_description} and denote by
$\mathscr{G}_{\Lambda}$ the system of arrows described therein.
Then, any system of $\Lambda$-CCRWs may be constructed as a deterministic function of $\mathscr{G}_{\Lambda}$.
Further, on suitable probability space with probability measure $\widehat{\mathbb P}$ and expectation operator~$\widehat{\E}$, 
let $\mathscr{G}^{(n)}_{\Lambda}$ be an i.i.d.\ sequence of arrow systems, each distributed as $\mathscr{G}_\Lambda$.
Given $k \in \N$ and distinct initial positions $\mathbf{z} = (z_1, \ldots, z_k) \in V^k$,
we define the process
$\widehat{\mathbf{X}}_t = (\widehat{X}^{1}_t, \ldots, \widehat{X}^{k}_t)$, $t \geq 0$
by letting each $\widehat{X}^{i}_t$ evolve starting from $z_i$ and following the arrows of $\mathscr{G}^{(i)}_\Lambda$; since the latter are independent and each single CCRW is distributed as a simple random walk, $\widehat{\mathbf{X}}_t$ is a system of $k$ independent continuous-time random walks.

Next, to define $\widehat{\mathbf{Y}}_t = (\widehat{Y}^{1}_t, \ldots, \widehat{Y}^{k}_t)$ 
(started at $\mathbf{z}$), we proceed as follows.
For a vector $\mathbf{y}=(y_1, \ldots, y_k) \in V^k$, 
denote by $S(\mathbf{y})= \{y_1, \ldots, y_k\}$ the set of coordinates in $\mathbf{y}$.
Now, for each $v \in V$, whenever 
$N_v \cap S(\widehat{\mathbf{Y}}_{t-}) = \cup_{j \in J}\{Y^{j}_{t-}\}$ 
and there are arrows of $\mathscr{G}^{(M_J)}_\Lambda$ (where $M_J = \max_{j \in J} j)$ 
at time $t$ from $N_v$ to $v$,
all $Y^j$ with $j \in J$ follow these arrows, i.e., $Y^{j}_t = v$ if there is an arrow from $Y^{j}_{t-}$ to $v$. Clearly, $\widehat{\mathbf{Y}}_t$ defined in this way
is a system of $k$ $\Lambda$-CCRWs started from $\mathbf{z}$.
Furthermore, defining, for $S \subset V$,
\begin{equation}\label{def:separation}
\Sep(S) := \inf \left\{ \dist(x,y) \colon x,y \in S, x \neq y\right\}
\end{equation}
(where $\inf \emptyset = \infty$), let
\begin{equation}\label{def:taubreak}
\taubreak := \inf \left\{t \geq 0 \colon\, \Sep(\widehat{\mathbf{Y}}_t) \leq 2 \right\}.
\end{equation}
A consequence of our construction is that
$\widehat{\mathbf{Y}}_t = \widehat{\mathbf{X}}_t$ for all finite $t \leq \taubreak$,
and so
\[
\taubreak = \inf \left\{t \geq 0 \colon\, \Sep(\widehat{\mathbf{X}}_t) \leq 2 \right\}
\]
as well. We also define
\begin{equation}\label{def:gbreak}
g^{(k)}_{\mathrm{break}}(\mathbf{z}) := \widehat{\Prob}(\taubreak < \infty).
\end{equation}
If~$\mathbf z$ has repeated entries, we extend this definition by removing the repetitions and applying~$g_\mathrm{break}$ to the resulting vector of distinct sites. In particular, $g^{(1)}_\mathrm{break} \equiv 0$, since the coupling never breaks in this case. 
The following lemma is the key property of this coupling.
\begin{lemma}\label{lemma:lim_g_Y_t}
Let~$k \in \N$, and let~$(\mathbf{Y}_t)_t$ be a~$k$-particle~$\Lambda$-CCRW. Then, for every~$\mathbf x \in V^k$,
$$\lim_{t \to \infty} E_\mathbf{x} g^{(k)}_\mathrm{break}( \mathbf Y_t )= 0.$$
\end{lemma}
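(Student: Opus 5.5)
The plan is to reduce to two particles using projective consistency, and then to use a Borel--Cantelli-type argument: a pair of coordinated walks cannot come within distance~$2$ infinitely often without eventually coalescing.

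\textbf{Step 1: reduction to $k=2$.} The coupling breaks exactly when some pair of the independent walks $\widehat X^i,\widehat X^j$ comes within distance~$2$. Each pair $(\widehat X^i,\widehat X^j)$ is itself a system of two independent walks. A union bound therefore gives
\[
g^{(k)}_{\mathrm{break}}(\mathbf z)\le \sum_{i<j} g^{(2)}_{\mathrm{break}}(z_i,z_j),
\]
where pairs with $z_i=z_j$ contribute $0$, in line with the convention of removing repetitions. By Lemma~\ref{lemma:CCRW_properties}(2), each pair $(Y^i_t,Y^j_t)$ of a $k$-particle $\Lambda$-CCRW is a $2$-particle $\Lambda$-CCRW. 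Hence
\[
E_{\mathbf x} g^{(k)}_{\mathrm{break}}(\mathbf Y_t)\le \sum_{i<j} E_{(x_i,x_j)} g^{(2)}_{\mathrm{break}}(Y^1_t,Y^2_t),
\]
and it suffices to treat $k=2$.

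\textbf{Step 2: rewriting $g^{(2)}$ through the CCRW.} Let $D=\{(y_1,y_2): y_1\neq y_2,\ \dist(y_1,y_2)\le 2\}$. For distinct $\mathbf y$, the coupling gives $\widehat{\mathbf Y}=\widehat{\mathbf X}$ up to $\taubreak$. Hence $g^{(2)}_{\mathrm{break}}(\mathbf y)=P_{\mathbf y}(\mathbf Y \text{ ever enters } D)$, and this equals $1$ on $D$. Once the two CCRW particles coalesce they stay together forever, so $g^{(2)}(\mathbf Y_t)=0$ from then on. By the Markov property at time $t$,
\[
E_{\mathbf x} g^{(2)}_{\mathrm{break}}(\mathbf Y_t)=P_{\mathbf x}\big(\mathbf Y_s\in D \text{ for some } s\ge t\big).
\]
The events on the right decrease in $t$. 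So the limit equals $P_{\mathbf x}(\mathbf Y_s\in D \text{ for arbitrarily large } s)$, and it remains to show this probability is $0$.

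\textbf{Step 3: uniform chance of coalescing from $D$.} This is the main step. I claim there is $c>0$, depending only on $\Delta$ and $\Lambda$, such that $P_{\mathbf y}(Y^1_1=Y^2_1)\ge c$ for every $\mathbf y\in D$.
\begin{itemize}
\item If $y_1\sim y_2$, one particle jumps onto the other at rate $\Lambda[0,1]$.
\item If $\dist(y_1,y_2)=2$ with common neighbour $v$, a jump of $Y^1$ to $v$ followed by a jump of $Y^2$ to $v$ suffices. Alternatively, a single broadcast from $v$ can move both particles at once.
\end{itemize}
The total jump rate of the system is at most $2\Delta\Lambda[0,1]$ (proof of Lemma~\ref{lemma:CCRW_properties}(1)). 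So with probability bounded below, the prescribed jumps happen in $[0,1]$ and nothing else does.

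Now define stopping times $\tau_1=\inf\{s: \mathbf Y_s\in D\}$ and $\tau_{n+1}=\inf\{s\ge\tau_n+1: \mathbf Y_s\in D\}$. If $\mathbf Y$ is in $D$ at arbitrarily large times, then all $\tau_n$ are finite. By the strong Markov property, each finite $\tau_n$ is followed by coalescence within one time unit with conditional probability at least $c$, independently of the past. Hence
\[
P(\tau_n<\infty \ \forall n,\ \text{no coalescence})\le \lim_{n\to\infty}(1-c)^n=0.
\]
On the complementary event of coalescence, $\mathbf Y$ never returns to $D$, since $D$ requires distinct entries. So $P_{\mathbf x}(\mathbf Y_s\in D \text{ for arbitrarily large } s)=0$, which concludes the proof.

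The only delicate point is checking the uniform lower bound $c$, in particular in the distance-$2$ case. There one uses bounded degrees to bound the competing rates, and notes that coalescence holds for either ordering or a simultaneous broadcast.
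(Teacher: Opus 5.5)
Your proposal is correct and follows the paper's proof essentially step by step: the reduction to two particles by a union bound and projective consistency, the control of $E_{\mathbf x} g^{(2)}_{\mathrm{break}}(\mathbf Y_t)$ by the probability that the non-coalesced pair is within distance $2$ after time $t$, and a uniform positive chance of coalescence from such configurations, iterated via the strong Markov property. The differences are cosmetic: your Step 2 gives an equality where the paper has an inequality, and you separate successive attempts by one time unit rather than by excursions to distance $3$, which also handles, with no extra argument, pairs that stay within distance $2$ without ever separating.
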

\begin{proof}
Let~$\mathbf x = (x_1,\dots,x_k)$. 
We may assume that $k \geq 2$. By projective consistency, 
if the coupling of the~\(k\)-particle system breaks, then the coupling must break for at least one pair of particles.
Hence, by a union bound,
\begin{equation}\label{eq_E_Y_t_limit_sum}
E_\mathbf{x} g^{(k)}_\mathrm{break}(\mathbf Y_t) \leq \sum_{1 \leq i < j \leq k} E_{(x_i,x_j)} g^{(2)}_\mathrm{break}(Y^i_t,Y^j_t).
\end{equation}
It is therefore enough to consider two particles.
Let
\begin{equation}\label{def_tau_coal}
\tau_{\mathrm{coal}} := \inf\{t\ge0:Y_t^1=Y_t^2\}.
\end{equation}
If the particles have already coalesced by time~\(t\), then, for all $s \geq t$,
$$g_\mathrm{break}^{(2)}( Y^1_s,Y^2_s ) = g^{(1)}(Y^1_s) = 0. $$
On the other hand, if the coupling started from~\((Y_t^1,Y_t^2)\) subsequently breaks, the two particles must at some later time be at graph distance at most~\(2\). Consequently, by the Markov property,
for each $x \neq y$,
$$ E_{(x,y)} g^{(2)}_\mathrm{break}(Y^1_t,Y^2_t) \leq P_{(x,y)} \big( \exists s \ge t: d_G(Y^1_s,Y^2_s) \leq 2, \; t  < \tau_\mathrm{coal} \big).$$
As~$t \to \infty$, the right-hand side decreases to
\begin{equation}\label{coal_dog_prob}
P_{(x,y)} \big( \sup \{s > 0 \colon\, d_G(Y^1_s,Y^2_s) \leq 2\} = \infty, \;  \tau_\mathrm{coal} = \infty \big).
\end{equation}
We claim that this probability is zero. Indeed, bounded degrees and the form of the transition rates imply that
\begin{equation}\label{away_from_0_bd}
    \inf_{ \substack{u \neq v \\ d_G(u,v) \leq 2} } P_{(u,v)} \left(  \begin{array}{c}
        (Y^1_t)_t \text{ and } (Y^2_t)_t \text{ coalesce before they}  \\
        \text{are for first time at graph distance~3 }
    \end{array}  \right) =: \epsilon(\Lambda) > 0,
\end{equation}

To see this, if~\(u\) and~\(v\) are adjacent, an ordinary coalescing jump occurs with a rate bounded away from zero. If~\(d_G(u,v)=2\), the two vertices have a common neighbor. They may then coalesce through a coordinated jump whenever~\(\Lambda\) charges~\((0,1]\); in the remaining case the dynamics reduce to ordinary coalescing random walks, for which coalescence can occur through a fixed finite sequence of jumps. Since the degrees are uniformly bounded, the competing rates are uniformly bounded above, giving the claimed positive lower bound.

Then, by the strong Markov property and an iterative standard argument, we have that 
$$ \lim_{t \to \infty} E_{(x,y)} g^{(2)}_\mathrm{break}(Y^1_t,Y^2_t  ) \leq \lim_{n \to \infty} (1-\epsilon(\Lambda))^n = 0, \quad \text{for each } x \neq y. $$
This proves the result for two particles and hence, by~\eqref{eq_E_Y_t_limit_sum}, for every \(k\).
\end{proof}

The coupling construction also yields the following inequality involving~$g_\mathrm{break}$, which provides a useful tool for comparing~$(\widehat{\mathbf X}_t)_t$ and~$(\widehat{\mathbf Y}_t)_t$. In the proof of duality, Lemma~\ref{lemma:duality}, we introduced the notation~$P_\mathbf{y}$ for~$(\mathbf Y_t)_t$, we use the same notation to refer to the independent random walks~$( \mathbf X_t)_t$, since no ambiguity should arise. For every function~$f:V^k \to [0,1]$, vector~$\mathbf x \in V^k$ and time~$t \ge 0$,
\begin{equation}\label{ineq_X_Y_bound_g}
\tfrac{1}{2} \cdot | E_\mathbf{x}f( \mathbf X_t) - E_\mathbf{x}f( \mathbf Y_t) | \leq g^{(k)}_\mathrm{break}(\mathbf x).
\end{equation}

We now use the coupling to construct the measures~$\mu_{\alpha,\Lambda}$. For each probability measure~$\kappa$ on~$\{0,1\}^V$, we define
$$ \widehat{\kappa}(\mathbf x) := \kappa( \{\eta: \eta(x_1) = \cdots = \eta(x_k) = 1 \} ), \quad k \in \N \; \text{ and } \; \mathbf x = (x_1,\dots,x_k) \in V^k. $$

\begin{proof}[Proof of Theorem~\ref{thm:classification_general}, part 1]
Fix~$\alpha \in \mathscr H$ and consider~$\nu_\alpha$ from~\eqref{def_nu_alpha}. By compactness of~$\{0,1\}^V$, it is enough to prove convergence of the finite-dimensional distributions of~$\nu_\alpha S^\Lambda_t$ and by inclusion-exclusion, it suffices to show that for each~$k \in \N$ and~$\mathbf x \in V^k$ the limit
\begin{equation}\label{limit_exitence_part_1}
\lim_{t \to \infty}\widehat{\nu_\alpha S_t^\Lambda}(\mathbf x) \qquad \text{exists.}
\end{equation}
Then, since~$\mu_{\alpha,\Lambda}$ is obtained as a distributional limit of~$(\eta_t)_t$, it is stationary for the broadcast voter model dynamics~\cite[Proposition 1.8(d)]{Liggett85}.

We now prove~\eqref{limit_exitence_part_1} for fixed~$k \in \N$ and~$\mathbf x \in V^k$. By duality~\eqref{duality_equality_raw}, 
$$ \widehat{ \nu_\alpha S^\Lambda_t }(\mathbf z) = E_\mathbf{z}\widehat{\nu}_\alpha(\mathbf Y_t) \quad \text{for all } \mathbf z \in V^k \; \text{ and } \; t\ge 0.$$
By harmonicity of~$\alpha$ and independence of the coordinates of~$(\mathbf X_t)_t$,
$$ \widehat{\nu}_\alpha(\mathbf z) = E_\mathbf{z} \widehat{\nu}_\alpha(\mathbf X_t) \quad \text{ for all } \mathbf z \in V^k \; \text{ and } \; t\ge 0, $$
where for the above we recall that,
if~\(\mathbf z\) has repeated entries, we use the convention above of removing repetitions before starting the independent system.

Then, by the Markov property, for any~$s,t \ge 0$,
\begin{equation}\label{limit_pre_part_1}
\left| \widehat{ \nu_\alpha S_{t+s}^\Lambda }(\mathbf{ x}) - \widehat{ \nu_\alpha S^\Lambda_{t} }(\mathbf{ x}) \right| \leq E_\mathbf{x} \left| E_{\mathbf Y_t}[ \widehat{\nu}_\alpha(\mathbf Y_s) ] - E_{\mathbf Y_t}[ \widehat{\nu}_\alpha(\mathbf X_s) ] \right| \stackrel{\eqref{ineq_X_Y_bound_g}} \leq 2 \cdot E_\mathbf{x} g^{(k)}_\mathrm{break}(\mathbf Y_t).
\end{equation}
Applying Lemma~\ref{lemma:lim_g_Y_t}, we obtain the existence of the limit in~\eqref{limit_exitence_part_1}, finishing the proof.
\end{proof}

\subsection{The infinite-collision regime}\label{ss:part2_thm_sm}
We now prove the second assertion of Theorem~\ref{thm:classification_general}. Throughout this subsection, we assume that~$G$ has the infinite collision property. The main step is to show that the infinite collision property of the underlying independent random walks forces two particles of the~\(\Lambda\)-CCRW to coalesce almost surely. We then combine this with duality to identify all stationary measures. 

Let \((X_t^1,X_t^2)_{t\ge0}\) be two independent random walks with generator \(\mathcal G_{\Lambda,\mathrm{irw}}^{(2)}\), from~\eqref{matrix_q_irw}. Recall that the infinite collision property is defined for two walks started from a common vertex. In continuous time, however, the same property holds for arbitrary starting positions. Indeed, the pair process is irreducible on~\(V^2\), and the strong Markov property applied at the first hitting time of any~\((x,y)\in V^2\) 
shows that infinite collisions from a common starting vertex imply infinite collisions from~\((x,y)\). Consequently, for every~$x,y \in V$,
$$ P_{(x,y)} \left( X_t^1=X_t^2 \text{ for some }t\ge0 \right) = 1. $$
We next establish the analogous coalescence property for the~\(\Lambda\)-CCRW.
\begin{lemma}\label{lem:coup_to_0}
Let~\((Y_t^1,Y_t^2)_{t\ge0}\) be a two-particle~\(\Lambda\)-CCRW. Then, for every  \(x,y\in V\),
$$ P_{(x,y)}( \exists t \ge 0: Y^1_t = Y^2_t ) = 1.   $$
\end{lemma}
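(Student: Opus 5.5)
We prove that the two walks must come within graph distance $2$ of each other infinitely often unless they coalesce. We then combine this with the uniform coalescence estimate \eqref{away_from_0_bd}, used in the proof of Lemma~\ref{lemma:lim_g_Y_t}. The tool is the coupling of Section~\ref{ss:part1_thm_sm}: until $\taubreak$ the CCRW pair coincides with a pair of independent walks, and independent walks collide almost surely under the infinite collision property.

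\emph{Step 1: the pair gets within distance $2$ almost surely.} Fix $x\neq y$ (the case $x=y$ is trivial). Run the coupling $(\widehat{\mathbf X}_t,\widehat{\mathbf Y}_t)$ from $(x,y)$. By the remark preceding the lemma, $P_{(x,y)}(X^1_t=X^2_t \text{ for some } t)=1$ for independent walks. In particular, $\Sep(\widehat{\mathbf X}_t)\le 2$ at some finite time, so $\taubreak<\infty$ almost surely. Since $\widehat{\mathbf Y}_t=\widehat{\mathbf X}_t$ for $t\le\taubreak$, and $\taubreak$ is equally the first time $\Sep(\widehat{\mathbf Y}_t)\le 2$, the CCRW pair reaches graph distance at most $2$ in finite time almost surely. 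We must handle one technicality: the first entrance into $\{\Sep\le2\}$ happens at a jump time at which the two processes still agree. This holds because the independent walk enters distance $\le 2$ from distance $\ge 3$ through a single-particle jump, and the construction makes both systems follow the same arrow until the break. Hence $\taubreak$ is a stopping time for the CCRW, and $d_G(Y^1_{\taubreak},Y^2_{\taubreak})\le 2$ almost surely.

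\emph{Step 2: iterate.} Define stopping times $\sigma_0=0$. Let $\rho_n$ be the first time after $\sigma_{n}$ at which $d_G(Y^1,Y^2)\le 2$, and let $\sigma_{n+1}$ be the first time after $\rho_n$ at which the distance equals $3$, or $\infty$ if this never happens. By Step 1 applied through the strong Markov property at each $\sigma_n<\infty$ (from a state at distance $3$), $\rho_n<\infty$ almost surely on $\{\sigma_n<\infty,\ \tau_{\mathrm{coal}}>\sigma_n\}$.

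By \eqref{away_from_0_bd}, at each $\rho_n$ the pair coalesces before reaching distance $3$ with conditional probability at least $\epsilon(\Lambda)>0$. Consequently $P(\tau_{\mathrm{coal}}=\infty)\le(1-\epsilon(\Lambda))^n$ for every $n$, on the event that the excursions continue forever.

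The remaining case is $\sigma_{n+1}=\infty$ without coalescence, meaning the pair stays within distance $2$ forever without meeting. This is also excluded by \eqref{away_from_0_bd}. Within distance $\le2$, coalescence occurs at rate bounded below, either directly or through a bounded sequence of jumps, so staying forever in that region without coalescing has probability zero. Hence $P_{(x,y)}(\tau_{\mathrm{coal}}<\infty)=1$.

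The main obstacle is the justification in Step 1 that the coupling genuinely transfers the collision property. Specifically, we must check that $\taubreak$ as computed on $\widehat{\mathbf X}$ is attained with both systems still equal, so that the CCRW itself is within distance $2$ at that moment. This rests on the fact that separation changes only by single jumps for independent walks. We also must confirm that restarting the coupling at each $\sigma_n$ is legitimate, which it is, since the coupling can be built afresh from any distinct starting pair by the strong Markov property of the CCRW.
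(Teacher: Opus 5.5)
Your proposal is correct and follows the paper's argument. The coupling with independent walks, together with the infinite collision property, gives $\taubreak<\infty$ almost surely, so the CCRW pair reaches distance at most $2$; restarting at each return to distance $3$ and using the uniform bound $\epsilon(\Lambda)$ from \eqref{away_from_0_bd} then yields $P_{(x,y)}(\tau_{\mathrm{coal}}=\infty)\le(1-\epsilon(\Lambda))^n$. Your explicit treatment of the case where the pair stays within distance $2$ forever without meeting fills in a point the paper leaves implicit.
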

\begin{proof}
Recall the coupling introduced in the previous section and the stopping time~$\tau_\mathrm{coal}$ and the positive constant~$\epsilon(\Lambda)$ from~\eqref{def_tau_coal} and~\eqref{away_from_0_bd}, respectively. Since the independent pair collides almost surely, the coupling time~\(\taubreak\) is finite almost surely. By the construction of the coupling, at this time either the two particles have already coalesced, or they are at graph distance at most~\(2\).
If coalescence does not occur during this first attempt, wait
until the particles reach graph distance~\(3\) and restart the coupling from their new positions. The infinite collision property of the independent walks again implies that the new coupling breaks almost surely, and hence either coalescence occurs or the particles return to graph distance at most \(2\). Repeating this construction and applying the strong Markov property at the successive attempts gives
$$ P_{(x,y)}( \tau_\mathrm{coal} = \infty ) \leq (1-\epsilon(\Lambda) )^n, \qquad \text{for every } n \in \N. $$
Letting~$n \to \infty$ yields~$P_{(x,y)}( \tau_\mathrm{coal} = \infty ) = 0$, which proves the claim.
\end{proof}

We are now ready to prove the second assertion of Theorem~\ref{thm:classification_general}.

\begin{proof}[Proof of Theorem~\ref{thm:classification_general}, part 2.]
Let~$\mu \in \mathscr I_\Lambda$. By stationarity and duality~\eqref{duality_equality_raw}, 
for every~$x,y \in V$ and~$t \ge 0$,
\begin{equation*}
\mu(\eta: \eta(x) \neq \eta(y)) = \int P_\eta( \eta_t(x) \neq \eta_t(y) ) \; \mu(\mathrm{d}\eta) = \int P_{(x,y)}( \eta(Y^1_t) \neq \eta(Y^2_t) ) \; \mu(\mathrm{d}\eta).
\end{equation*}
The probability inside the integral is bounded above by~$P_{(x,y)}(\tau_\mathrm{coal}>t)$. Taking the limit as~$t \to \infty$ this vanishes by Lemma~\ref{lem:coup_to_0}. Hence,~$\mu$ is supported on the consensus configurations. Consequently, there exists~$\alpha \in [0,1]$ such that~$\mu = \alpha \delta_{\overline{1}} + (1-\alpha)\delta_{\overline{0}}.$ Then,
$$ \mathscr I_\Lambda = \{ \alpha \delta_{\overline{1}} + (1-\alpha)\delta_{\overline{0}}: \alpha \in [0,1]  \} $$
and therefore~$\mathscr I_{\Lambda,e} = \{\delta_{\overline{0}},\delta_{\overline{1}}\}$. This proves the second assertion of Theorem~\ref{thm:classification_general}.
\end{proof}

\subsection{The finite-collision regime}\label{ss:part3_thm_sm}
We now prove the third assertion of Theorem~\ref{thm:classification_general}. Throughout this subsection, we assume that~\(G\) has the finite collision property. The proof follows the general strategy developed for the classical voter model in Chapter~V of~\cite{Liggett85}, with the coupling introduced in Section~\ref{ss:part1_thm_sm} used to compare the~\(\Lambda\)-CCRW with independent random walks. We first establish several preliminary results and then use them to characterize the extremal stationary measures.

\subsubsection{Preliminaries}
We begin with a criterion for identifying the measures~\(\mu_{\alpha,\Lambda}\). 
Define
$$ \Xi_t^x(\eta) := E_x\big[\eta(X_t)\big], 
\qquad x\in V, \; \eta\in\{0,1\}^V,
$$
where \((X_t)_{t\ge0}\) is a random walk that jumps along every incident edge at rate~$\Lambda[0,1]$.

\begin{proposition}\label{prop:main_part3}
Let~$\mu \in \mathscr I_\Lambda$ and~$\alpha \in \mathscr H$. Then~$\mu = \mu_{\alpha,\Lambda}$ if and only if, for every~$x \in V$,
\begin{equation}\label{Xi_def_t}
\Xi^x_t(\eta):= E_x\eta(X_t) \xlongrightarrow[t \to \infty]{L^2(\mu)} \alpha(x).
\end{equation}
\end{proposition}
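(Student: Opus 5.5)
We follow Liggett's Chapter V argument for the classical voter model (Lemma V.1.10 / Theorem V.1.9). The plan is to expand the $L^2(\mu)$ distance as
\[
\int \bigl(\Xi^x_t(\eta)-\alpha(x)\bigr)^2\,\mu(\mathrm d\eta)
= \int E_{(x,x)}\bigl[\eta(X^1_t)\eta(X^2_t)\bigr]\,\mu(\mathrm d\eta) - 2\alpha(x)\int E_x\eta(X_t)\,\mu(\mathrm d\eta)+\alpha(x)^2,
\]
where $X^1,X^2$ are independent walks started at $x$. Writing $\widehat\mu$ as in Section~\ref{ss:part1_thm_sm}, the right-hand side equals $E_{(x,x)}\widehat\mu(X^1_t,X^2_t)-2\alpha(x)E_x\widehat\mu(X_t)+\alpha(x)^2$. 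The condition \eqref{Xi_def_t} is therefore a statement about one- and two-point correlations of $\mu$ evaluated along independent walks, and the two directions reduce to comparing such functionals with the correlations of $\mu_{\alpha,\Lambda}$.

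For ($\Rightarrow$), take $\mu=\mu_{\alpha,\Lambda}$. By part 1 of Theorem~\ref{thm:classification_general}, duality and \eqref{limit_pre_part_1}, we have $\widehat\mu_{\alpha,\Lambda}(\mathbf z)=\lim_s E_{\mathbf z}\widehat\nu_\alpha(\mathbf Y_s)$. Moreover, \eqref{ineq_X_Y_bound_g} together with $\widehat\nu_\alpha(\mathbf z)=E_{\mathbf z}\widehat\nu_\alpha(\mathbf X_s)$ gives
\[
|\widehat\mu_{\alpha,\Lambda}(\mathbf z)-\widehat\nu_\alpha(\mathbf z)|\le 2g^{(k)}_{\mathrm{break}}(\mathbf z).
\]
For $k=1$ this gives $\widehat\mu_{\alpha,\Lambda}(y)=\alpha(y)$, so the middle term equals $-2\alpha(x)^2$ by harmonicity. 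For $k=2$ it gives
\[
E_{(x,x)}\widehat\mu_{\alpha,\Lambda}(X^1_t,X^2_t)= E_{(x,x)}\bigl[\alpha(X^1_t)\alpha(X^2_t)\bigr]+O\bigl(E_{(x,x)}g^{(2)}_{\mathrm{break}}(X^1_t,X^2_t)\bigr),
\]
plus a contribution from the event $X^1_t=X^2_t$. We then need two facts: $E\,\alpha(X^1_t)\alpha(X^2_t)\to\alpha(x)^2$, and the error terms vanish. The first holds because $\alpha(X_t)$ is a bounded martingale, converging a.s. to some $\alpha_\infty$, with $X^1,X^2$ independent, so the limit is $(E\alpha_\infty)^2=\alpha(x)^2$. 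The second is the key input from the finite collision property. Applying Lemma~\ref{lemma:lim_g_Y_t}'s argument to the independent pair, $P_{(x,x)}(\exists s\ge t: d(X^1_s,X^2_s)\le 2)\to 0$. This must be derived from finite collisions: by bounded degrees, each visit to distance $\le 2$ has a uniformly positive chance of producing a collision within unit time, so infinitely many such visits would force infinitely many collisions. It follows that $E g^{(2)}_{\mathrm{break}}(X^1_t,X^2_t)\to 0$ and $P(X^1_t=X^2_t)\to0$.

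For ($\Leftarrow$), let $\mu\in\mathscr I_\Lambda$ satisfy \eqref{Xi_def_t}. We show $\widehat\mu(\mathbf x)=\widehat\mu_{\alpha,\Lambda}(\mathbf x)$ for all $k$ and $\mathbf x$. By stationarity and duality, $\widehat\mu(\mathbf x)=E_{\mathbf x}\widehat\mu(\mathbf Y_t)$, and likewise for $\mu_{\alpha,\Lambda}$. Applying the Markov property at time $t$ and then the coupling \eqref{ineq_X_Y_bound_g} over an additional time $s$ gives
\[
\widehat\mu(\mathbf x)=E_{\mathbf x}\Bigl[E_{\mathbf Y_t}\widehat\mu(\mathbf X_s)\Bigr]+O\bigl(E_{\mathbf x}g^{(k)}_{\mathrm{break}}(\mathbf Y_t)\bigr).
\]
For distinct $\mathbf z$, $E_{\mathbf z}\widehat\mu(\mathbf X_s)=\int\prod_i\Xi^{z_i}_s(\eta)\,\mu(\mathrm d\eta)$, which by \eqref{Xi_def_t} and boundedness (an $L^2$ product estimate, since products of $k$ $[0,1]$-valued functions converge in $L^1$ when each factor converges in $L^2$) tends to $\prod_i\alpha(z_i)=\widehat\nu_\alpha(\mathbf z)$ as $s\to\infty$. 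Hence $\widehat\mu(\mathbf x)=E_{\mathbf x}\widehat\nu_\alpha(\mathbf Y_t)+O(E_{\mathbf x}g^{(k)}_{\mathrm{break}}(\mathbf Y_t))$. Letting $t\to\infty$ and using Lemma~\ref{lemma:lim_g_Y_t} together with part 1 yields $\widehat\mu(\mathbf x)=\widehat\mu_{\alpha,\Lambda}(\mathbf x)$. Inclusion–exclusion then gives $\mu=\mu_{\alpha,\Lambda}$.

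The main obstacle is the ($\Rightarrow$) direction. There, the finite collision property must be converted into the statement that two independent walks started together are eventually, and permanently, at distance $\ge 3$. We would prove this by a conditional Borel–Cantelli/Lévy zero-one argument, using the uniform positive collision probability from distance $\le 2$ under (BD), in the same way as \eqref{away_from_0_bd}.
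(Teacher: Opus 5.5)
Your proposal is correct and follows the paper's proof essentially step for step. The ($\Leftarrow$) direction is the same Markov-property-plus-coupling estimate leading to \eqref{ineq_mu_nu_alpha_g}, and the ($\Rightarrow$) direction is the same reduction via \eqref{e:reduc_cond_conv} to the two-point function along independent walks, with error controlled by $2E_{(x,x)}g^{(2)}_{\mathrm{break}}(\mathbf X_t)+P_{(x,x)}(X^1_t=X^2_t)$ and the needed decay being exactly Lemma~\ref{lemma:main_part3}; the only superfluous step is your martingale-convergence argument, since by independence and harmonicity $E_{(x,x)}[\alpha(X^1_t)\alpha(X^2_t)]=\alpha(x)^2$ holds for every $t$.
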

Since~$0 \leq \Xi^x_t \leq 1$, the~$L^2(\mu)$-convergence above may equivalently be replaced by convergence in~$\mu$-probability or by convergence in~$L^p(\mu)$ for any~$p \ge 1$.

The proof of Proposition~\ref{prop:main_part3} will use the following consequence of the finite collision property. It is the counterpart, for the independent system, of Lemma~\ref{lemma:lim_g_Y_t}.

\begin{lemma}\label{lemma:main_part3}
Let~$k \in \N$ and let~$(\mathbf X_t)_t$ be the system of~$k$ independent random walks with generator~$\mathcal G^{(k)}_{\Lambda,\mathrm{irw}}$ from~\eqref{matrix_q_irw}. Then, for every~$\mathbf x \in V^k$,
$$ \lim_{t \to \infty} E_\mathbf{x} g^{(k)}_\mathrm{break}(\mathbf X_t) = 0. $$
\end{lemma}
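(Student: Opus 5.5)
The plan is to reduce to two particles and then convert the finite collision property into the statement that two independent walks are only finitely often within distance $2$. For $k=1$ there is nothing to prove, since $g^{(1)}_\mathrm{break}\equiv 0$. For $k\ge 2$, the event $\{\taubreak<\infty\}$ for the $k$-particle coupling started from distinct sites $\mathbf z$ is contained in the union over pairs $i<j$ of the events that $\widehat X^i,\widehat X^j$ are at some time at distance at most $2$. This follows because $\taubreak=\inf\{t:\Sep(\widehat{\mathbf X}_t)\le 2\}$ is expressed through the independent system. A union bound then gives
\[
g^{(k)}_\mathrm{break}(\mathbf z)\le \sum_{i<j} g^{(2)}_\mathrm{break}(z_i,z_j),
\]
and this remains valid with repeated entries under the convention of removing repetitions, with $g^{(2)}(z,z)=0$. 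Since the coordinates of $\mathbf X_t$ are independent, the pair $(X^i_t,X^j_t)$ is a pair of independent walks. It therefore suffices to show $E_{(x,y)}g^{(2)}_\mathrm{break}(X^1_t,X^2_t)\to 0$.

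For two independent walks, $g^{(2)}_\mathrm{break}(u,v)=P_{(u,v)}(\exists s\ge 0: d_G(X^1_s,X^2_s)\le 2)$ for $u\ne v$, and $g^{(2)}_\mathrm{break}(u,u)=0$. The Markov property gives
\[
E_{(x,y)} g^{(2)}_\mathrm{break}(X^1_t,X^2_t)\le P_{(x,y)}\bigl(\exists s\ge t: d_G(X^1_s,X^2_s)\le 2\bigr),
\]
and as $t\to\infty$ the right-hand side decreases to $P_{(x,y)}(d_G(X^1_s,X^2_s)\le 2 \text{ for arbitrarily large } s)$. The task is therefore to show that this probability vanishes under the finite collision property.

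This last step is the main obstacle: the finite collision property controls actual collisions started from a common site, whereas we need control of near-encounters from arbitrary starting points. The first step is to transfer the property to arbitrary starting pairs by the same irreducibility and strong Markov argument used in Section~\ref{ss:part2_thm_sm}. If from $(x,y)$ there were infinitely many collisions with positive probability, then, by irreducibility of the pair chain on $V^2$, there would be infinitely many collisions with positive probability starting from some diagonal point $(z,z)$. Hence, from every starting pair, there are almost surely finitely many collision times. The second step uses bounded degrees: there is $\delta>0$ such that, from any $(u,v)$ with $d_G(u,v)\le 2$, the independent walks collide within time $1$ with probability at least $\delta$. For instance, one walk moves along a geodesic toward the other while the other stays put, with rates bounded above and below uniformly. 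The third step defines successive stopping times $\sigma_n$ at which the pair is at distance at most $2$ and which are separated by at least time $1$. On the event of infinitely many near-encounters, all $\sigma_n$ are finite, and a conditional Borel--Cantelli (Lévy's extension) argument shows that infinitely many of the windows $[\sigma_n,\sigma_n+1]$ contain a collision. For $u\neq v$, entering the diagonal occurs through a jump with $X_{t-}\ne Y_{t-}$, so these are distinct collision events in the sense of \eqref{def:FCP}, provided the windows are disjoint. This contradicts finiteness of collisions almost surely, so the limiting probability is zero, which yields the lemma.
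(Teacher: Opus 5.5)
Your proposal is correct and follows essentially the same route as the paper. Both arguments use a union bound over pairs, then the Markov property to reduce to the event that two independent walks are within distance $2$ at arbitrarily large times, and finally a strong Markov/Borel--Cantelli argument that turns infinitely many near-encounters into infinitely many collisions, contradicting the finite collision property. Your fixed time windows of length $1$ (instead of the paper's excursions back to distance $3$), your explicit transfer of the finite collision property to arbitrary starting pairs, and your use of L\'evy's conditional Borel--Cantelli lemma are minor refinements of steps the paper treats briefly; just make sure your $\delta$ also covers the case $X_{\sigma_n}=Y_{\sigma_n}$, for instance by letting one walk step away and back while the other stays put.
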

\begin{proof}
Let~$\mathbf x = (x_1,\dots,x_k)$. 
We may assume that $k \geq 2$.
By the construction of the coupling in Section~\ref{ss:part1_thm_sm}, if the coupling breaks, then at some time at least two particles of the independent system must be at graph distance at most~2. Hence,
\begin{equation}\label{ineq_coup_irw_only}
\widehat{\Prob}( \taubreak <\infty ) \leq \widehat{\Prob}( \exists \; 1\leq i < j \leq k \text{ and } s \ge 0: d_G( \widehat{X}^i_s,\widehat{X}^j_s ) \leq 2 ).
\end{equation}
Applying this bound at time \(t\) and using the Markov property and a union bound, we obtain
$$  E_\mathbf{x} g^{(k)}_\mathrm{break}(\mathbf X_t) \leq \sum_{1 \leq i < j \leq k} P_{(x_i,x_j)} \big( \exists s \ge t: d_G(X^i_s,X^j_s) \leq 2 \big) $$
It therefore suffices to prove that, for every \(x,y\in V\),
\begin{equation*}
\lim_{t \to \infty} P_{(x,y)}( \exists s \ge t: d_G(X^1_s,X^2_s) \leq 2 ) = 
P_{(x,y)} \left( \sup\{ s > 0 \colon\, d_G(X_s^1,X_s^2)\le2 \} = \infty \right) = 0.
\end{equation*}
To prove this, we use a similar argument as in Lemma~\ref{lemma:lim_g_Y_t}. 
Clearly, if the two walks are within distance two, in finite time they must reach distance at least three; thus the only way to make the set of times at which they are within distance $2$ unbounded is to successively reach distance $\geq 3$ and then $\leq 2$ back and forth, infinitely often.
In each such loop, after reaching distance $\leq 2$, 
the two random walks meet with a probability bounded from below by a certain $\epsilon>0$ before reaching distance $\geq 3$. The strong Markov property applied at such successive times would give infinitely many collision attempts, each with probability at least~$\epsilon$ of succeeding. The Borel-Cantelli lemma then implies that infinitely many of these attempts succeed almost surely, contradicting the finite collision property. The result then follows from \eqref{ineq_coup_irw_only}.
\end{proof}

We now prove the identification criterion.
\begin{proof}[Proof of Proposition~\ref{prop:main_part3}]
Let~$\mu \in \mathscr I$ and~$\alpha \in \mathscr H$. Assume~\eqref{Xi_def_t} holds. Then, for any~$k \in \N$ and~$\mathbf x = (x_1,\dots,x_k)$ with distinct entries,
\begin{equation}\label{proof_prop_1}
\lim_{t \to \infty} E_\mathbf{x}\widehat{\mu}(X^1_t,\dots,X^k_t) = \lim_{t \to \infty} \int \Xi^{x_1}_t(\eta) \cdots \Xi^{x_k}_t(\eta) \; \mu(\mathrm{d}\eta) = \alpha(x_1) \cdots \alpha(x_k) = \widehat{\nu}_\alpha(\mathbf x).
\end{equation}
By stationarity and the Markov property we have for any~$s,t \ge 0$,
\begin{equation*}
    |\widehat{\mu}(\mathbf x) - E_\mathbf{x} E_{\mathbf Y_t} \widehat{\mu}(\mathbf X_s)  | = |E_\mathbf{x} E_{\mathbf Y_t} \widehat{\mu}(\mathbf Y_s) - E_\mathbf{x} E_{\mathbf Y_t} \widehat{\mu}(\mathbf X_s) | \stackrel{\eqref{ineq_X_Y_bound_g}} \leq 2 \cdot E_\mathbf{x}[g^{(k)}_\mathrm{break}(\mathbf Y_t)].
    \end{equation*}
Taking the limit as~$s \to \infty$ an using the bounded convergence theorem with~\eqref{proof_prop_1}, we have
\begin{equation}\label{ineq_mu_nu_alpha_g}
|\widehat{\mu}(\mathbf x) - E_\mathbf{x}[\widehat{\nu}_\alpha(\mathbf Y_t)]| \le 2 \cdot E_\mathbf{x}[g^{(k)}_\mathrm{break}(\mathbf Y_t)].
\end{equation}
Using Lemma~\ref{lemma:lim_g_Y_t} together with the definition of~$\mu_\alpha$, we obtain~$\mu = \mu_\alpha$ after taking the limit as~$t \to \infty$.

Conversely, assume that~$\mu = \mu_{\alpha,\Lambda}$. Fix~$x \in V$, we want to prove that~$\Xi^x_t$ converges in~$L^2(\mu_{\alpha,\Lambda})$ to~$\alpha(x)$. Note that
\begin{equation}\label{e:reduc_cond_conv}
E_{\mu_{\alpha,\Lambda}}( \Xi^x_t (\eta)- \alpha(x) )^2 = E_{(x,x)}\widehat{\mu}_{\alpha,\Lambda}(X^1_t,X^2_t) - 2\alpha(x)E_x \widehat{\mu}_{\alpha,\Lambda}(X^1_t) + \alpha(x)^2.
\end{equation}
By harmonicity of~$\alpha$, to show that the previous expression goes to~0 as~$t \to \infty$, it suffices to prove that~$E_{(x,x)}\widehat{\mu}_{\alpha,\Lambda}(X^1_t,X^2_t)$ converges to~$\alpha(x)^2$ as~$t \to \infty$. 

By harmonicity and the Markov property, for any~$s,t \ge 0$,
\begin{align*}
| \alpha(x)^2 - E_{(x,x)} E_{\mathbf X_t} \widehat{\nu}_\alpha(\mathbf Y_s) | & = | E_{(x,x)} E_{\mathbf X_t} \widehat{\nu}_\alpha(\mathbf X_s) - E_{(x,x)} E_{\mathbf X_t} \widehat{\nu}_\alpha(\mathbf Y_s) | \\[2mm]
& \hspace{-1.2mm} \stackrel{\eqref{ineq_X_Y_bound_g}} \leq 2 \cdot E_{(x,x)} \big[ g^{(2)}_\mathrm{break}( \mathbf X_t ); X^1_t \neq X^2_t \big] + P_{(x,x)}(X^1_t = X^2_t).
\end{align*}
Analogously to the argument above, we first let~$s \to \infty$ to obtain~$\mu_{\alpha,\Lambda}$. Then, as~$t \to \infty$, the first term tends to zero by Lemma~\ref{lemma:main_part3} and the second by the finite collision property. This completes the proof.
\end{proof}

The final preliminary result concerns subsequential limits of the dynamics. Define
\[
\mathscr{I}_{\Lambda, \infty} := \left\{\begin{array}{c}\mu:  \text{there exists a probability measure $\nu$ on $\{0,1\}^V$ and a sequence $(t_n)_{n \ge 0}$}\\
\text{with $t_n \to \infty$ such that $\nu S_{t_n}$ converges weakly to $\mu$ as $n \to \infty$}\end{array}\right\}.
\]
The following result is the analogue, in the present setting, of Corollary~1.28 in Chapter~V of~\cite{Liggett85}. 
\begin{lemma}\label{lemma_I_infty}
$\mathscr I_\Lambda = \mathscr I_{\Lambda,\infty}$.
\end{lemma}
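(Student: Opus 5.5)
The inclusion $\mathscr I_\Lambda \subseteq \mathscr I_{\Lambda,\infty}$ is immediate: for $\mu\in\mathscr I_\Lambda$ take $\nu=\mu$ and any $t_n\to\infty$, so that $\nu S^\Lambda_{t_n}=\mu$ for every $n$. The content lies in the reverse inclusion. Following Liggett's Chapter~V, I would show that every subsequential limit $\mu=\lim_n \nu S^\Lambda_{t_n}$ satisfies $\mu S^\Lambda_s=\mu$ for each fixed $s\ge0$. By compactness and inclusion--exclusion it suffices to work with the functions $\widehat{\cdot}(\mathbf x)$. By the Feller property, $\widehat{\mu S^\Lambda_s}(\mathbf x)=\lim_n \widehat{\nu S^\Lambda_{t_n+s}}(\mathbf x)$. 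It therefore suffices to prove that, for every $k$, every $\mathbf x\in V^k$ and every $s\ge 0$,
\[
\lim_{t\to\infty}\sup_{\nu}\bigl|\widehat{\nu S^\Lambda_{t+s}}(\mathbf x)-\widehat{\nu S^\Lambda_{t}}(\mathbf x)\bigr|=0 .
\]

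To prove this, I would use duality (Lemma~\ref{lemma:duality}) to write $\widehat{\nu S^\Lambda_t}(\mathbf x)=E_{\mathbf x}\widehat\nu(\mathbf Y_t)$, and then compare with the independent system through the coupling and \eqref{ineq_X_Y_bound_g}. I would split according to the regime. In the infinite-collision regime, Lemma~\ref{lem:coup_to_0} together with projective consistency (Lemma~\ref{lemma:CCRW_properties}) shows that all $k$ particles coalesce almost surely into a single walk. After that, $\widehat\nu(\mathbf Y_t)=\widehat\nu(Y_t)$ is a function of one simple random walk. The problem then reduces to $\sup_{f:V\to[0,1]}|E_{\mathbf x}[f(Y_{t+s})]-E_{\mathbf x}[f(Y_t)]|\to 0$, averaged over the coalescence time and location. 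This follows from the standard fact that, for a continuous-time random walk with bounded rates, $\|P_z(X_{t+s}\in\cdot)-P_z(X_t\in\cdot)\|_{TV}\to0$ as $t\to\infty$, because the time-$s$ increment is a continuous-time smoothing; the proof is the same as in Liggett's Lemma~V.1.26.

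In the finite-collision regime, I would apply the Markov property at time $t$ and then \eqref{ineq_X_Y_bound_g}, writing
\[
E_{\mathbf x}\widehat\nu(\mathbf Y_{t+s})=E_{\mathbf x}E_{\mathbf Y_t}\widehat\nu(\mathbf Y_s)\approx E_{\mathbf x}E_{\mathbf Y_t}\widehat\nu(\mathbf X_s),
\]
with error at most $2E_{\mathbf x}g^{(k)}_{\mathrm{break}}(\mathbf Y_t)$ uniformly in $\nu$. This error vanishes by Lemma~\ref{lemma:lim_g_Y_t}. A symmetric manipulation at a large intermediate time $u<t$ gives $E_{\mathbf x}\widehat\nu(\mathbf Y_t)\approx E_{\mathbf x}E_{\mathbf Y_u}\widehat\nu(\mathbf X_{t-u})$, with error $2E_{\mathbf x}g^{(k)}_{\mathrm{break}}(\mathbf Y_u)$. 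Both quantities are thereby reduced to the independent system started from $\mathbf Y_u$. We then need
\[
\sup_{f:V^k\to[0,1]}\bigl|E_{\mathbf z}f(\mathbf X_{r+s})-E_{\mathbf z}f(\mathbf X_r)\bigr|\to0 \qquad (r\to\infty)
\]
for fixed $\mathbf z$, applied with $r=t-u$. This holds because $\mathbf X$ is a product of independent continuous-time walks with bounded rates, and total-variation closeness of time-shifts tensorizes: the TV distance is at most the sum over the $k$ coordinates. Dominated convergence over the law of $\mathbf Y_u$ then finishes the argument, first letting $t\to\infty$ and then $u\to\infty$.

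The main obstacle is the single-walk estimate $\|P_z(X_{r+s}\in\cdot)-P_z(X_r\in\cdot)\|_{TV}\to0$ on a general bounded-degree graph. I would prove it by writing $X_r=Z_{N_r}$, with $N_r$ a Poisson process of rate $\Lambda[0,1]\deg$ after uniformizing by adding self-loops so that the total rate equals the constant $\Lambda[0,1]\Delta$. Then $X_{r+s}$ corresponds to $Z_{N_r+M}$, where $M$ is an independent Poisson$(cs)$ variable. The TV distance between Poisson$(cr)$ and Poisson$(c(r+s))$ tends to $0$ as $r\to\infty$, because their means differ by $O(1)$ while their standard deviations are of order $\sqrt r$. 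This bound is uniform in the discrete chain $Z$.
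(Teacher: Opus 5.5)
Your argument is correct, but it takes a genuinely different route from the paper's. The paper gives no direct argument. It invokes Corollary~3.2 of \cite{AstoquillcaValesin26}, a general duality criterion, and only checks its hypotheses: the dual state space $V^k$ is countable, the dual jump rates are bounded above and below, and the duality functions $D^{(k)}$ are bounded, continuous and measure-determining. You instead prove by hand the mechanism such a criterion rests on. A subsequential limit is stationary once $\widehat{\nu S^\Lambda_{t+s}}(\mathbf x)-\widehat{\nu S^\Lambda_t}(\mathbf x)\to0$, and by duality this reduces to total-variation smoothing of time shifts of the dual. This gives a self-contained proof, at the cost of length.

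That cost is higher than necessary. The Poisson uniformization you use for one walk applies verbatim to the $k$-particle $\Lambda$-CCRW, whose total jump rate is at most $c:=k\Lambda[0,1]\Delta$ (proof of Lemma~\ref{lemma:CCRW_properties}). Writing $\mathbf Y_t=\mathbf Z_{N_t}$ gives at once
\[
\sup_{\nu}\bigl|E_{\mathbf x}\widehat\nu(\mathbf Y_{t+s})-E_{\mathbf x}\widehat\nu(\mathbf Y_t)\bigr|\le d_{\mathrm{TV}}\bigl(\mathrm{Poi}(ct),\mathrm{Poi}(c(t+s))\bigr)\longrightarrow0 .
\]
This needs only an upper bound on the dual rates, with no split into collision regimes and no comparison with independent walks. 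Even within your approach the split is superfluous: the finite-collision argument only uses Lemma~\ref{lemma:lim_g_Y_t}, which holds on every bounded-degree graph.

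There is one small slip in that argument. You apply the Markov property at time $t$ for $\mathbf Y_{t+s}$ and at time $u$ for $\mathbf Y_t$, so the two terms are not yet driven by the same independent system. Apply it at time $u$ to both. This gives $E_{\mathbf x}E_{\mathbf Y_u}\widehat\nu(\mathbf X_{t-u+s})$ and $E_{\mathbf x}E_{\mathbf Y_u}\widehat\nu(\mathbf X_{t-u})$, each with error $2E_{\mathbf x}g^{(k)}_{\mathrm{break}}(\mathbf Y_u)$. Your tensorized bound then finishes as you describe.
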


The inclusion~$\mathscr I_\Lambda \subseteq \mathscr{I}_{\Lambda,\infty}$ is immediate. The reverse inclusion is not automatic from subsequential convergence: in general, a subsequential limit of a Markov process need not be stationary. By contrast, if~\(\nu S_t^\Lambda\) converges as~\(t\to\infty\), then its limit is stationary by Proposition~1.8 of~\cite{Liggett85}. In the present setting, the reverse inclusion 
will be a consequence of the duality established in Lemma~\ref{lemma:duality}.
\begin{proof}
For the reverse inclusion, we apply Corollary 3.2 of~\cite{AstoquillcaValesin26}. For every~\(k\in\mathbb N\), the dual state space~\(V^k\) is countable. If~\(c_k(\mathbf x)\) denotes the total jump rate of the~\(k\)-particle~\(\Lambda\)-CCRW, then Lemma~\ref{lemma:CCRW_properties} and the one-particle marginal property give
$$ \Lambda[0,1] \leq c_k(\mathbf x) \le k\Lambda[0,1] \cdot \sup_{x\in V} \deg(x), \qquad \mathbf x\in V^k. $$
Hence the rate conditions in (3.1) of the cited result are satisfied. Moreover,~\(D^{(k)}(\eta,\mathbf y)=\prod_i\eta(y_i)\) is bounded and continuous, condition~(ii) holds with~\(M=1\), and the family~\(\{D^{(k)}\}\) is measure-determining by inclusion-exclusion. 
This concludes the proof.
\end{proof}

Now we are ready to give the:

\begin{proof}[Proof of Theorem~\ref{thm:classification_general}, part 3.]
Our goal is to prove that, in this case,
$$ \mathscr I_{\Lambda,e} = \{ \mu_{\alpha,\Lambda}: \alpha \in \mathscr H \}. $$
We first show that every measure in the family on the right-hand side is extremal, and then prove the reverse inclusion.

\paragraph{Extremality of~$\mu_{\alpha,\Lambda}$.} Fix~$\alpha \in \mathscr H$, and suppose that, for some~$\theta \in (0,1)$ 
$$ \mu_{\alpha,\Lambda} = \theta \cdot \mu_1 + (1-\theta) \cdot \mu_2, \qquad \mu_1,\mu_2 \in \mathscr I_\Lambda. $$
By Proposition~\ref{prop:main_part3}, for each~$x \in V$, 
$$ \int (\Xi^x_t(\eta) - \alpha(x))^2 \mu_{\alpha,\Lambda}(\mathrm{d} \eta) \to 0 \quad \text{as} \quad t \to \infty. $$
On the other hand,
$$ \int (\Xi^x_t(\eta) - \alpha(x))^2 \mu_{\alpha,\Lambda}(\mathrm{d} \eta) = \theta \cdot \int (\Xi^x_t(\eta) - \alpha(x))^2 \mu_1(\mathrm{d} \eta) + (1-\theta) \cdot \int (\Xi^x_t(\eta) - \alpha(x))^2 \mu_2(\mathrm{d} \eta) $$
Since both terms on the right-hand side are nonnegative, each of them must converge to zero. Again, by Proposition~\ref{prop:main_part3}, 
 $\mu_1$ and $\mu_2$
 must coincide with~$\mu_{\alpha,\Lambda}$. Hence,~$\mu_{\alpha,\Lambda}$ is extremal, and therefore
$$ \{ \mu_{\alpha,\Lambda}: \alpha \in \mathscr H \} \subseteq \mathscr I_{\Lambda,e}.$$

\paragraph{The reverse inclusion.} We begin by proving that for each measure~$\mu \in \mathscr I_\Lambda$, integer~$k \in \N$ and vector~$\mathbf x \in V^k$ the limit
$$ \lim_{t \to \infty} E_\mathbf{x}\widehat{\mu}(\mathbf X_t) \quad \text{exists.} $$
Indeed, for every~$s,t \ge 0$, stationarity, duality and the Markov property give
$$ |E_\mathbf{x} \widehat{\mu}(\mathbf X_{t+s}) - E_\mathbf{x} \widehat{\mu}(\mathbf X_{t}) | \stackrel{\eqref{duality_equality_raw}} =  |E_\mathbf{x} E_{\mathbf X_t} \widehat{\mu}( \mathbf X_s) - E_\mathbf{x} E_{\mathbf X_t} \widehat{\mu}(\mathbf Y_s) | \stackrel{\eqref{ineq_X_Y_bound_g}} \leq 2 \cdot E_\mathbf{x} g_\mathrm{break}( \mathbf X_t ).$$
Then, due to Lemma~\ref{lemma:main_part3}, we have the desired convergence. We define for each~$k \in \N$,
\begin{equation}\label{h_function_part3}
h^{(k)}(\mathbf x) := \lim_{t \to \infty} E_\mathbf{x}\widehat{\mu}(\mathbf X_t), \quad  \mathbf x \in V^k.
\end{equation}
We use this function to establish the inclusion. By the Markov property and bounded convergence,
$$ E_\mathbf{x}h^{(k)}(\mathbf X_s) = h^{(k)}(\mathbf x), \qquad s\ge0, $$
so~$h^{(k)}$ is harmonic for the~$k$-system of independent random walks. 

We divide the proof into two cases according to whether the graph is recurrent or transient.  The recurrent case is an adaptation of Theorem~2.2 (B) in~\cite{Astoquillca26}.

\paragraph{The recurrent case.} Assume~$G$ is recurrent. 
By Orey's theorem, the tail~\(\sigma\)-algebra of the one-particle (discrete-time) random walk is trivial. Using independence, the corresponding product argument for finitely many independent copies is the same as in the proof of Theorem 2.2(B) of~\cite{Astoquillca26}; the continuous-time version follows as in the proof of Proposition 2.1 of \cite{Asto2026Coll}. It follows that every bounded harmonic function for the \(k\)-particle independent system is constant. Hence
$$ h^{(k)}(\mathbf x) = h_k, \qquad \text{for every } \mathbf x \in V^k,  $$
for some number~$h_k \in [0,1]$ depending only on~$k$.

Fix~$\mu \in \mathscr I_\Lambda$. For~\(t\ge0\), we introduce an auxiliary probability measure \(\mu_t\) on~$\{0,1\}^V$ as follows. First sample~$\eta$ according to~$\mu$, and, conditionally on~$\eta$, sample a configuration~$\xi$ according to 
$$ \bigotimes_{x \in V} \mathrm{Ber}( \Xi^x_t(\eta) ). $$
We denote by~$\mu_t$ the unconditional law of~$\xi$. Note that for every vector~\(\mathbf x=(x_1,\ldots,x_k)\) with distinct entries,
$$ \widehat\mu_t(\mathbf x) = \int \Xi^{x_1}_t(\eta) \cdots \Xi^{x_k}_t(\eta) \; \mu(\mathrm d\eta) = E_{\mathbf x}\widehat\mu(\mathbf X_t). $$
Then,~$\widehat\mu_t(\mathbf x)\longrightarrow h_k$ as~\(t\to\infty\). Hence~\(\mu_t\) converges weakly to a probability measure~\(\mu_\infty\) whose~\(k\)-point functions depend only on~\(k\). Thus~\(\mu_\infty\) is exchangeable.

By de Finetti’s theorem, there exists a (uniquely determined) probability measure~$\gamma$ on the Borel sets of~$[0,1]$ such that
$$ \mu_\infty = \int_{[0,1]} \nu_\alpha \; \gamma(\mathrm{d}\alpha). $$

Noting that, by duality and Markovianity, 
$\widehat{\mu}(\mathbf{x}) = \widehat{\mu S^{\Lambda}_{t+s}}(\mathbf{x}) = E_{\mathbf{x}} E_{\mathbf{Y}_t} \widehat{\mu}(\mathbf{Y}_s)$ 
and that
\[
E_{\mathbf{x}} E_{\mathbf{Y}_t} \widehat{\mu}(\mathbf{X}_s) = E_{\mathbf{x}} \widehat{\mu_s}(Y_t) 
\underset{s \to \infty}{\longrightarrow} E_{\mathbf{x}} \widehat{\mu_\infty}(\mathbf{Y}_t) = \int_{[0,1]} \widehat{\nu_\alpha S^\Lambda_t}(\mathbf x) \; \gamma(\mathrm{d}\alpha),
\]
we argue as for~\eqref{limit_pre_part_1} to obtain

$$ \left| \widehat{\mu}(\mathbf x) - \int_{[0,1]} \widehat{\nu_\alpha S^\Lambda_t}(\mathbf x) \; \gamma(\mathrm{d}\alpha) \right| \leq 2 \cdot E_\mathbf{x} g^{(k)}_\mathrm{break}(\mathbf Y_t) \quad \text{for each } k \in \N \text{ and } \mathbf x \in V^k. $$

Taking taking~$t \to \infty$ and applying Lemma~\ref{lemma:lim_g_Y_t} we obtain $\mu = \int^1_0 \mu_{\alpha, \Lambda} \; \gamma(\mathrm{d}\alpha)$. 

Suppose now that~$\mu \in \mathscr I_{\Lambda,e}$. We claim that~$\gamma$ must be a Dirac mass. Indeed, otherwise there exists~$c \in (0,1)$ such that~$0 < \gamma[0,c] < 1$. Define
$$ \mu_1 = \frac{1}{\gamma[0,c]} \cdot \int_{[0,c]} \mu_{\alpha,\Lambda} \; \gamma(\mathrm d \alpha), \qquad \mu_2 = \frac{1}{1- \gamma[0,c]} \cdot \int_{(c,1]} \mu_{\alpha,\Lambda} \; \gamma(\mathrm d \alpha) $$
Since every \(\mu_{\alpha,\Lambda}\) is stationary, both~\(\mu_1\) and~\(\mu_2\) belong to~\(\mathscr I_\Lambda\), and
$$ \mu = \theta \cdot \mu_1 + (1-\theta) \cdot \mu_2, \quad \text{with } \; \theta = \gamma[0,c]. $$
Furthermore, their one-site marginals are different:
$$ \widehat{\mu}_1(x) = \frac{1}{\theta} \int_{[0,c]} \alpha \; \gamma(\mathrm d \alpha) \leq c < \frac{1}{1-\theta} \int_{ (c,1] }  \alpha \; \gamma(\mathrm d \alpha) = \widehat{\mu}_2(x).$$
Thus,~$\mu_1 \neq \mu_2$, contradicting the extremality of~$\mu$. Hence,~$\gamma = \delta_a$ for some~$a \in [0,1]$, and consequentely~$\mu = \mu_{a,\Lambda}$. Since every bounded harmonic function on a recurrent connected graph is constant, this proves the desired reverse inclusion in the recurrent case.

\paragraph{The transient case.} Assume~$G$ is transient. Let~$\mu \in \mathscr I_{\Lambda,e}$, and define~$\alpha(x):=\widehat{\mu}(x)$,~$x \in V$. The stationarity of~$\mu$ and the one-particle duality imply that~$\alpha$ is harmonic with respect to the simple random walk,
i.e., $\alpha \in \mathscr H$. Let us prove that~$\mu = \mu_{\alpha,\Lambda}$. By Proposition~\ref{prop:main_part3} and \eqref{e:reduc_cond_conv}, it is enough to show that
\begin{equation}\label{eq_lim_h_2}
h^{(2)}(x,x) \stackrel{\eqref{h_function_part3}}= \lim_{t \to \infty} E_{(x,x)}\widehat{\mu}(X^1_t,X^2_t) = \alpha(x)^2 \quad \text{for each } x \in V.
\end{equation}
We use three ingredients. 
First, extremality of~$\mu$, together with Lemma~\ref{lemma_I_infty}, allows us to apply the argument of Lemma~1.30 in Chapter~V of~\cite{Liggett85}. It yields the following asymptotic decorrelation:
\begin{equation}\label{part3_final_1}
\text{If } \mu \in \mathscr I_{\Lambda,e} \quad \text{ then } \quad \lim_{t \to \infty} E_x\widehat{\mu}(X^1_t,y) = \widehat{\mu}(x) \cdot \widehat{\mu}(y) \quad \text{for all } x,y \in V.
\end{equation}
Second, following the proof of Lemma 1.32 in Chapter V of~\cite{Liggett85}, Corollary~7.3 of Chapter~II implies that \(h^{(2)}\) is harmonic in each coordinate separately. To apply the discrete-time statement, we use the standard uniformization of the continuous-time walk: one adds suitable holding probabilities so that the chain is represented by a discrete-time Markov kernel with a constant clock rate. For the two-particle system, this kernel is the average of the two commuting coordinate kernels, so the cited corollary applies. Hence
\begin{equation}\label{part3_final_2}
E_x h^{(2)}(X^1_t,y) = h^{(2)}(x,y) \quad \text{for all } x,y \in V \text{ and } t \ge 0.
\end{equation}
Third, by the coupling inequality,
$$ \left| E_x h^{(2)}(X^1_t,x) - E_x\widehat{\mu}(X^1_t,x) \right| \stackrel{\eqref{ineq_X_Y_bound_g}} \leq 2 \cdot E_x g^{(2)}_\mathrm{break}(X^1_t,x) + P_x(X_t=x) \quad \text{ for all } x\in V. $$
Taking the limit~$t \to \infty$ on both sides and using transience together with~\eqref{part3_final_1} and~\eqref{part3_final_2}, we conclude that to have~\eqref{eq_lim_h_2} it suffices to prove that
\begin{equation}\label{eq_lim_g_coup_X^1=0}
\lim_{t \to \infty} E_x g^{(2)}_\mathrm{break}(X^1_t,x) = 0 \quad \text{for all } x \in V.
\end{equation}

From the coupling construction, we have, for each~$x \neq y$,
\begin{equation}\label{ineq_g_int_d_G}
g^{(2)}_\mathrm{break}(x,y) \leq P_{x,y} \left( \exists s \ge 0: d_G (X^1_s,X^2_s) \leq 2 \right).
\end{equation}
To further upper bound this, we define~$\tau := \inf\{t \ge 0: d_G(X^1_s,X^2_s) \leq 2 \}$. Then, by the strong Markov property,
$$ E_{x,y}\left[ \left.\int^\infty_0 \mathds{1}\{ d_G(X^1_s,X^2_s) \leq 2 \} \; \mathrm{d}s \right| \mathcal F_\tau \right] \ge \mathds{1}\{ \tau < \infty \} \cdot P_{(X^1_\tau,X^2_\tau)} \left( \begin{array}{c}
    \text{No jump of } (X^1_s,X^2_s) \\
    \text{ during } [0,1] 
\end{array} \right).$$
By the bounded degree assumption, the last probability is bounded from below by~$C := \exp( - 2 \Lambda[0,1] \cdot \sup_{x \in V} \deg(x) )$. 
Taking expectations on both sides and using~\eqref{ineq_g_int_d_G} we get
\begin{equation}\label{ineq_q_int_d_G_1}
g^{(2)}_\mathrm{break}(x,y) \leq C^{-1} \cdot \int^\infty_0 P_{x,y} \left( d_G(X^1_s,X^2_s) \leq 2 \right)  \mathrm{d}s.
\end{equation}
Denote by $p_t$
the transition kernel of the random walk, i.e.,
$$ p_t(x,y) := P_x(X^1_t = y) \quad \text{for } t \ge 0 \text{ and } x,y \in V. $$
Since~\(G\) has bounded degrees and each edge is crossed at rate~\(\Lambda[0,1]>0\), there exists~$c>0$ such that
\begin{equation}\label{eq_c_p_1_M}
p_1(x,y) \ge c \quad \text{for all } x,y \text{ with } \; d_G(x,y) \leq 2.
\end{equation}
Fix~$x \in V$. Using independence and the Chapman-Kolmogorov identity, we have, by~\eqref{ineq_q_int_d_G_1},
\begin{align*}
E_xg_\mathrm{break}(X^1_t,x) & \leq C^{-1} \cdot \int^\infty_0 \sum_{z_1,z_2: d_G(z_1,z_2) \leq 2} p_{s+t}(x,z_1) p_s(x,z_2) \; \mathrm{d}s \\[2mm]
& = C^{-1} \cdot \int^\infty_0 \sum_{z_2} p_s(x,z_2) \sum_{z_1: d_G(z_1,z_2) \leq 2} p_{s+t}(x,z_1)  \; \mathrm{d}s \\[2mm]
& \leq (Cc)^{-1} \cdot \int^\infty_0 p_{2s+t+1}(x,x) \; \mathrm{d}s = \frac{(Cc)^{-1}}{2} \cdot \int^\infty_{t+1}p_r(x,x) \; \mathrm{d}r, 
\end{align*}
where the last inequality follows from Chapman-Kolmogorov, the lower bound in~\eqref{eq_c_p_1_M} and 
the fact that
$p_t(x,y) = p_t(y,x)$. Finally, by transience, the 
last integral above tends to zero as $t \to \infty$, 
verifying
\eqref{eq_lim_g_coup_X^1=0}. 
Thus
$\mu = \mu_{\alpha,\Lambda}$ and
we have the reverse inclusion.
\end{proof}

\subsection{Proof of Corollary~\ref{c:long_lattice}}
\label{ss:proof_corollary}

For every~\(R\in\mathbb N\), the graph~\(G_{d,R}\) is transitive and has bounded degrees, and its simple random walk is a symmetric irreducible finite-range random walk on~\(\mathbb Z^d\). Its recurrence classification is therefore the same as that of nearest-neighbor simple random walk: it is recurrent for~\(d=1,2\) and transient for~\(d\geq3\); see for instance, \cite[Theorem~4.1.1]{LawlerLimic10}.  
On transitive graphs, recurrence and transience correspond to the infinite and finite collision properties, respectively. Moreover, by the Choquet--Deny theorem \cite{ChoquetDeny60}, every bounded harmonic function for a symmetric irreducible finite-range random walk on \(\mathbb Z^d\) is constant.  
The first and second assertions therefore follows from Theorem~\ref{thm:classification_general}.

For the third assertion, we prove that~\(\mu_{\alpha,\Lambda}\) is mixing.  The argument is analogous to the proof of Proposition 1.2 in~\cite{Astoquillca26}, to which we refer for further details. Translation invariance follows from the translation invariance of both the dynamics and the initial measure~\(\nu_\alpha\). 

Let~$(\mathbf Y_t)_t$ be a~$k$-particle~$\Lambda$-CCRW. By the duality relationship~\eqref{duality_equality_raw}, 
$$ P_{\nu_\alpha}( \eta_t(x) = 1, \; \forall x \in S(\mathbf x) ) = E_\mathbf{x} \alpha^{| S( \mathbf Y_t  ) |}. $$
for every time~$t \ge 0$,~$k \in \N$ and~$\mathbf x \in V^k$. Since~$| S( \mathbf Y_t ) |$ is non-increasing in~$t$, we can define~$|S( \mathbf Y_\infty )| := \lim_{t \to \infty} | S( \mathbf Y_t ) |$. Taking~$t \to \infty$, we obtain
\begin{equation}\label{eq_moment_rep}
\mu_{\alpha,\Lambda}(\eta: \eta(x) = 1, \; \forall x \in S(\mathbf x) ) = E_\mathbf{x} \alpha^{| S( \mathbf Y_\infty) |}.
\end{equation}
Now let \(\mathbf x\) and \(\mathbf y\) be two finite vectors in~\(\Z^d\). For \(z\in\mathbb Z^d\), consider two independent~\(\Lambda\)-CCRWs,~\((\mathbf X_t)_t\) and~\((\mathbf Y_t)_t\), started from~\(\mathbf x\) and~\(\mathbf y+z\), respectively. Let
$$ \sigma_z = \inf\{t\ge0:\operatorname{dist}( S(\mathbf X_t),S(\mathbf Y_t))\le 2R\}, \qquad z \in \Z^d. $$
Until time~\(\sigma_z\), there is no vertex whose broadcast neighborhood contains particles from both systems. Consequently, the pair~$(\mathbf X_t, \mathbf Y_t)$ can be coupled with the~\(\Lambda\)-CCRW started from \( (\mathbf x, \mathbf y + z)\) so that they agree up to time~\(\sigma_z\).  From this point, the proof proceeds exactly as in the reference. By a union bound and the coupling with independent random walks,
$$ \mathbb P(\sigma_z<\infty) \leq \sum_{u\in S(\mathbf x)} \sum_{v\in S(\mathbf y)+z} P_{u,v} \left( \exists t\geq0: \|X_t-Y_t\|_1\leq2R \right), $$
where~\( (X_t)_t\) and~\( (Y_t)_t\) are independent copies of the underlying symmetric finite-range random walk. The estimate of Lemma~3.1 in~\cite{Astoquillca26} extends verbatim to symmetric finite-range random walks on~\(\mathbb Z^d\); hence the right-hand side tends to zero as~$\|z\|_1 \rightarrow\infty.$ Using the moment representation~\eqref{eq_moment_rep}, we conclude 
as in \cite{Astoquillca26}
that~\(\mu_{\alpha,\Lambda}\) is mixing (in particular ergodic) with respect to translations of \(\mathbb Z^d\).
\hfill \qed

\section{Dependence on the broadcasting measure}\label{ss_dependence_Lambda}
Sections~\ref{s:TheBVModel} and~\ref{s:stat_meas} reveal two different roles played by the measure~\(\Lambda\). First, at the level of a single site, the~\(\Lambda\)-broadcast voter model has the same flip rates as the standard voter model, up to the deterministic time scaling~\(\Lambda[0,1]\); equivalently, each coordinate of the dual process is a simple random walk run at this speed. Second, the dependence on~\(\Lambda\) appears only through the correlations created by simultaneous broadcasts, or, dually, through coordinated jumps of several particles. Having characterized the space of stationary measures with~$\mu_{\alpha,\Lambda}$ in Section~\ref{s:stat_meas}, we now ask to what extend these measures retain information about the broadcast mechanism~$\Lambda$.

There are two cases in which the stationary measure is immediately independent of the broadcast measure. If~$\alpha \equiv 0$ or~$\alpha \equiv 1$, then
$$ \mu_{\alpha,\Lambda} = \delta_{\overline{0}} \qquad \text{or} \qquad \mu_{\alpha,\Lambda} = \delta_{\overline{1}}, $$
respectively. Moreover, if~\(G\) has the infinite collision property, Theorem~\ref{thm:classification_general} shows that every bounded harmonic function is constant, say~\(\alpha\equiv p\), and
$$ \mu_{\alpha,\Lambda} = p \cdot \delta_{\overline{1}}+(1-p) \cdot \delta_{\overline{0}}, $$
independently of~\(\Lambda\). Thus the dependence on the broadcasting measure is nontrivial only in the finite-collision regime.

In the first part of this section, we investigate when two different moment profiles give rise to different stationary measures; in the second, we prove continuity of~\(\mu_{\alpha,\Lambda}\) with respect to the broadcast measure.

\subsection{Distinctness of the stationary measures}\label{ss_distinctness}
Recall the definition of $\Delta$ in \eqref{def:Delta}.
Since multiplying~\(\Lambda\) by a positive constant only amounts to a linear deterministic time change 
that does not affect the stationary measures, 
for simplicity we may and do henceforth assume that
\[ \Lambda[0,1]=1. \]
In this case, the moments \eqref{def:moments} are given by
\[ \lambda_\Lambda^{(k)}:=\int_{[0,1]}q^{k-1} \; \Lambda(dq), \qquad k \in \N, \]
and in particular,~\(\lambda_\Lambda^{(1)}=1\). 
Here we will investigate to what extent the~\emph{moment profile} of~$\Lambda$,
\[ \mathsf m(\Lambda) := \left( \lambda_\Lambda^{(2)},\ldots,\lambda_\Lambda^{(\Delta)} \right), \]
characterises the stationary measures~$\mu_{\alpha,\Lambda}$.
We first record the easy direction.
\begin{lemma}\label{lemm:m_imply_mu}
Let~$\alpha \in \mathscr H$ be a harmonic function, and let~$\Lambda_1,\Lambda_2$ be probability measures on~$[0,1]$. If~$\mathsf m(\Lambda_1) = \mathsf m(\Lambda_2)$ then~$\mu_{\alpha,\Lambda_1} = \mu_{\alpha,\Lambda_2}$. 
\end{lemma}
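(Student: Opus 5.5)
The plan is to show that the pre-generator $\mathcal L_\Lambda$ in \eqref{Markov_pre_dyn} depends on $\Lambda$ only through $\mathsf m(\Lambda)$. Then $S^{\Lambda_1}_t=S^{\Lambda_2}_t$ for all $t$, and hence
\[
\mu_{\alpha,\Lambda_1}=\lim_t \nu_\alpha S^{\Lambda_1}_t=\lim_t \nu_\alpha S^{\Lambda_2}_t=\mu_{\alpha,\Lambda_2}
\]
by part~1 of Theorem~\ref{thm:classification_general}. Since $\mathcal L_\Lambda$ is determined by the rates $c_\Lambda(x,k)$ with $1\le k\le |N_x|\le \Delta$, it suffices to check that each such rate is a fixed linear combination of $\lambda^{(1)}_\Lambda=1,\lambda^{(2)}_\Lambda,\dots,\lambda^{(\Delta)}_\Lambda$.

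For this step, expand $(1-q)^{n-k}$ with $n=|N_x|$ by the binomial theorem inside \eqref{def_c_Lambda}:
\[
c_\Lambda(x,k)=\int_{[0,1]} q^{k-1}(1-q)^{n-k}\,\Lambda(\mathrm dq)=\sum_{j=0}^{n-k}\binom{n-k}{j}(-1)^j\int_{[0,1]} q^{k-1+j}\,\Lambda(\mathrm dq).
\]
Under the normalization $\Lambda[0,1]=1$, the last integral is exactly $\lambda^{(k+j)}_\Lambda$. Moreover $k+j\le n\le\Delta$, so only the first $\Delta$ moments appear. Note that $q^{k-1}$ has exponent $k-1\ge 0$, so the integrand is a genuine polynomial and the atom of $\Lambda$ at $0$ causes no trouble. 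In particular, when $k=1$ and $j=0$ the term is $\Lambda[0,1]=1$. Consequently, $\mathsf m(\Lambda_1)=\mathsf m(\Lambda_2)$ gives $c_{\Lambda_1}(x,k)=c_{\Lambda_2}(x,k)$ for every $x\in V$ and every $1\le k\le |N_x|$, and therefore $\mathcal L_{\Lambda_1}f=\mathcal L_{\Lambda_2}f$ for every local $f$.

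Finally, Theorem~\ref{thm:existence} says the semigroup is generated by the closure of $\mathcal L_\Lambda$ on local functions. Equal pre-generators have equal closures, so the Feller semigroups coincide. Note also that the random walk speed $\Lambda[0,1]=1$ is the same for both measures, so $\mathscr H$ and $\nu_\alpha$ play identical roles in the two cases.

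There is no real obstacle here. The only point needing care is the normalization: with general masses one would get $\mathcal L_{\Lambda_1}=c\,\mathcal L_{\Lambda_2}$ with $c=\Lambda_1[0,1]/\Lambda_2[0,1]$, a time change that preserves the limits $\lim_t \nu_\alpha S_t$. Under the standing convention $\Lambda[0,1]=1$ this case does not arise.

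As an alternative route one could argue through duality: the CCRW rates $r_\Lambda$ in \eqref{eq_CCRW_rate} are likewise polynomial in these moments, and $\mu_{\alpha,\Lambda}$ is determined by \eqref{eq_moment_rep}. The generator argument above is more direct, however.
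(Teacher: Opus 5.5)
Your proposal is correct and matches the paper's proof: binomially expanding $(1-q)^{\deg(x)-k}$ shows that each rate $c_\Lambda(x,k)$ is a fixed linear combination of $\lambda^{(1)}_\Lambda,\dots,\lambda^{(\Delta)}_\Lambda$. Hence the pre-generators coincide, so the semigroups coincide, and therefore the weak limits $\mu_{\alpha,\Lambda}$ coincide.

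Your added remarks on closures of pre-generators and on the time change for unnormalized measures are accurate; the paper's proof simply does not spell them out.
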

\begin{proof} 
For every~$x \in V$ and~$1 \leq k \leq \deg(x)$,
\begin{equation}\label{c_Lambda_charac}
c_\Lambda(x,k) \stackrel{\eqref{def_c_Lambda}}= \int_{[0,1]} q^{k-1}(1-q)^{\deg(x)-k} \; \Lambda(\mathrm dq) = \sum^{\deg(x)-k}_{i=0}(-1)^i \binom{\deg(x)-k}{i} \lambda^{(k+i)}_\Lambda.
\end{equation}
Since~\( \deg(x) \le\Delta\), every transition rate is determined by the moment profile. And if~$\mathsf m(\Lambda_1) = \mathsf m(\Lambda_2)$, then the generators~$\mathcal L_{\Lambda_1}$ and~$\mathcal L_{\Lambda_2}$ coincide, so the two broadcast voter models have the same semigroup~$S^{\Lambda_1}_t, S^{\Lambda_2}_t $. Since~\(\mu_{\alpha,\Lambda}\) is obtained as the weak limit of the process started from~\(\nu_\alpha\), the two limiting measures coincide.
\end{proof}

We now turn to the reverse implication and ask under which conditions~$\mu_{\alpha,\Lambda_1} = \mu_{\alpha,\Lambda_2}$ forces~$\mathsf m(\Lambda_1) = \mathsf m(\Lambda_2)$. Assume that~$\mathsf m(\Lambda_1) \neq \mathsf m(\Lambda_2)$, and let 
\begin{equation}\label{claim_k_wlg}
\kappa := \min \{ \ell \in \{2,\dots,\Delta \}: \lambda^{(\ell)}_{\Lambda_1} \neq \lambda_{\Lambda_2}^{(\ell)} \}.
\end{equation}
By Proposition~\ref{prop:main_part3}, since~$\mu_{\alpha,\Lambda_1}$ and~$\mu_{\alpha,\Lambda_2}$ satisfy~\eqref{Xi_def_t} with
$$ \mu_{\alpha,\Lambda_1}(\eta: \eta(x)=1) = \mu_{\alpha,\Lambda_2}(\eta: \eta(x)=1) = \alpha(x), \quad x \in V, $$
we have that~$\mu_{\alpha,\Lambda_1} = \mu_{\alpha,\Lambda_2}$ if and only if~$\mu_{\alpha,\Lambda_1}$ is a stationary measure for the~$\mathcal L_{\Lambda_2}$-dynamics. Since~\(\mu_{\alpha,\Lambda_1} \) is stationary for~\(\mathcal L_{\Lambda_1}\), to prove that the two measures are distinct it is therefore enough to find a local function~\(f\) such that
\begin{equation}\label{e:cond_diff_measures}
\int \mathcal L_{\Lambda_2} f \; d\mu_{\alpha,\Lambda_1}\neq0 \qquad \text{or equivalently} \qquad \int (\mathcal L_{\Lambda_1} - \mathcal L_{\Lambda_2})f\; d\mu_{\alpha,\Lambda_1} \neq0.
\end{equation}
We use the first index \(\kappa\) at which the profiles differ to construct such test functions. Recall the function~$D$ from~\eqref{Def_D_k_duality}. For a finite set~$A$ we write
$$ D_\eta(A) := \prod_{x \in A} \eta(x) \quad \text{and} \quad D_{1-\eta}(A) := \prod_{x \in A} (1-\eta(x)), \qquad \eta \in \{0,1\}^V. $$

\begin{lemma}\label{lemma_diffL12}
Let~$\mathbf x$ be a vector with distinct entries such that
\begin{equation}\label{cond_lemma_L12}
| S(\mathbf x) \cap N_v| \leq \kappa \qquad \text{for every } v \in V.
\end{equation}
Then,
$$ \left( \mathcal L_{\Lambda_1} - \mathcal L_{\Lambda_2} \right)
D_{\mathbf x}(\eta)
= \left( \lambda^{(\kappa)}_{\Lambda_1} - \lambda^{(\kappa)}_{\Lambda_2} \right) \cdot \big( I_{\mathbf x}(\eta) + O_{\mathbf x}(\eta) \big), $$
where
\begin{align*}
& I_{\mathbf x}(\eta) := \hspace{-4mm} \sum_{ \substack{x \in S(\mathbf x) \\ |S(\mathbf x) \cap N_x| = \kappa} } \hspace{-4mm} D_\eta( S(\mathbf x) \setminus N_x ) \cdot D_{1-\eta}( S(\mathbf x) \cap N_x), \\[2mm]
& O_{\mathbf x}(\eta) := \hspace{-4mm} \sum_{ \substack{x \notin S(\mathbf x) \\ |S(\mathbf x) \cap N_x| = \kappa} } \hspace{-4mm} D_\eta( S(\mathbf x) \setminus N_x ) \cdot \big[ \eta(x) D_{1-\eta}( S(\mathbf x) \cap N_x) + (-1)^\kappa(1-\eta(x)) \cdot D_\eta( S(\mathbf x) \cap N_x) \big].
\end{align*}
\end{lemma}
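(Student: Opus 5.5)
We compute the difference of generators directly from \eqref{Markov_pre_dyn}, grouping broadcasts by source vertex $x$ and by the set $B=A\cap S(\mathbf x)$ of particles hit. The duality computation in Lemma~\ref{lemma:duality} already does this grouping: by \eqref{c_Lambda_r_Lambda}, for each $\Lambda$,
\[
\mathcal L_\Lambda D_{\mathbf x}(\eta)=\sum_{x\in V}\sum_{\substack{B\subseteq N_x\cap S(\mathbf x)\\ B\neq\varnothing}} r_\Lambda(\mathbf x,x,B)\big[D_\eta(\mathbf x^{B\to x})-D_\eta(\mathbf x)\big].
\]
Set $m:=|N_x\cap S(\mathbf x)|$ and $b:=|B|$. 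Expanding $(1-q)^{m-b}$ binomially gives
\[
r_\Lambda(\mathbf x,x,B)=\sum_{i=0}^{m-b}(-1)^i\binom{m-b}{i}\lambda_\Lambda^{(b+i)},
\]
which is a linear combination of moments of orders $b,\dots,m$. By \eqref{cond_lemma_L12} we have $m\le\kappa$, and by \eqref{claim_k_wlg} the moments of orders $1,\dots,\kappa-1$ agree for $\Lambda_1,\Lambda_2$. Hence $r_{\Lambda_1}-r_{\Lambda_2}$ vanishes unless $m=\kappa$. When $m=\kappa$, only the $\lambda^{(\kappa)}$ term survives, and
\[
r_{\Lambda_1}-r_{\Lambda_2}=(-1)^{\kappa-b}\big(\lambda^{(\kappa)}_{\Lambda_1}-\lambda^{(\kappa)}_{\Lambda_2}\big).
\]
This reduces the lemma to the identity
\[
\sum_{x:\,m=\kappa}\ \sum_{\varnothing\ne B\subseteq M}(-1)^{\kappa-|B|}\big[D_\eta(\mathbf x^{B\to x})-D_\eta(\mathbf x)\big]=I_{\mathbf x}+O_{\mathbf x},
\]
where we write $M:=N_x\cap S(\mathbf x)$.

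Next, fix $x$ with $m=\kappa$ and factor out the untouched part $D_\eta(S(\mathbf x)\setminus M)$. Since the entries of $\mathbf x$ are distinct, moving $B$ to $x$ gives
\[
D_\eta(\mathbf x^{B\to x})=D_\eta(S(\mathbf x)\setminus N_x)\,\eta(x)\,D_\eta(M\setminus B).
\]
This holds whether or not $x\in S(\mathbf x)$: if $x\in S(\mathbf x)$, then $x\notin N_x$, so $\eta(x)$ is already a factor of $D_\eta(S(\mathbf x)\setminus N_x)$ and $\eta(x)^2=\eta(x)$. We then apply the inclusion–exclusion identity
\[
\sum_{B\subseteq M}(-1)^{|M|-|B|}\prod_{y\in M\setminus B}\eta(y)=\prod_{y\in M}(\eta(y)-1)=(-1)^\kappa D_{1-\eta}(M).
\]
Removing the $B=\varnothing$ term, the sum of the first summands equals $\eta(x)\big[D_{1-\eta}(M)-(-1)^\kappa D_\eta(M)\big]$. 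The second summand contributes $-D_\eta(M)\sum_{\varnothing\ne B}(-1)^{\kappa-|B|}=(-1)^{\kappa}D_\eta(M)$, because $\sum_{B\subseteq M}(-1)^{|B|}=0$. Multiplying back by $D_\eta(S(\mathbf x)\setminus N_x)$ yields
\[
D_\eta(S(\mathbf x)\setminus N_x)\big[\eta(x)D_{1-\eta}(M)+(-1)^\kappa(1-\eta(x))D_\eta(M)\big],
\]
which is exactly the summand of $O_{\mathbf x}$. When $x\in S(\mathbf x)$, the factor $\eta(x)$ is already present in $D_\eta(S(\mathbf x)\setminus N_x)$, so the $(1-\eta(x))$ term is killed and $\eta(x)$ is absorbed. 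This gives exactly the summand of $I_{\mathbf x}$.

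The main obstacle is the bookkeeping: tracking signs and the $B=\varnothing$ correction, and handling the case $x\in S(\mathbf x)$, where $\eta(x)$ appears twice. The reduction to $m=\kappa$ is immediate once the rates are written in the moment expansion.
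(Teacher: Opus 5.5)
Your argument is essentially the paper's. You write $\mathcal L_\Lambda D_{\mathbf x}$ in the dual CCRW form and expand $r_\Lambda$ in moments. This shows $r_{\Lambda_1}-r_{\Lambda_2}$ vanishes unless $|N_x\cap S(\mathbf x)|=\kappa$, in which case it equals $(-1)^{\kappa-|B|}(\lambda^{(\kappa)}_{\Lambda_1}-\lambda^{(\kappa)}_{\Lambda_2})$. You then sum over $B$ by inclusion--exclusion. Treating $x\in S(\mathbf x)$ and $x\notin S(\mathbf x)$ together via $\eta(x)^2=\eta(x)$ is just a compact version of the paper's case split.

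One sign slip needs fixing. Substituting $C=M\setminus B$ shows
\[
\sum_{B\subseteq M}(-1)^{|M|-|B|}\prod_{y\in M\setminus B}\eta(y)=\prod_{y\in M}\bigl(1-\eta(y)\bigr)=D_{1-\eta}(M),
\]
not $\prod_{y\in M}(\eta(y)-1)=(-1)^\kappa D_{1-\eta}(M)$. Your next line actually uses the correct value $D_{1-\eta}(M)$, so your final formula is right. As displayed, however, the identity would put a spurious factor $(-1)^\kappa$ on the $D_{1-\eta}$ terms. That is wrong precisely in the odd-$\kappa$ case where the lemma is needed.
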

\begin{proof}
From the duality relationship~\eqref{duality_to_proof},
$$ \mathcal L_\Lambda D_\mathbf{x}(\eta) = \sum_{x \in V} \sum_{\varnothing \neq B \subseteq S(\mathbf x) \cap N_x } r_\Lambda( \mathbf x,x,B ) \cdot \left[ D_\eta(\mathbf x^{B \to x}) - D_\eta(\mathbf x) \right], \quad \text{for all } \eta \in \{0,1\}^V. $$
Let~$x \in V$ and let~$B \subseteq S(\mathbf x) \cap N_x$ be a nonempty set. If~$|S(\mathbf x) \cap N_x| = \kappa$, then
$$ r_{\Lambda_1}(\mathbf x,x,B) - r_{\Lambda_2}(\mathbf x,x,B) = \int_{[0,1]} q^{|B|-1}(1-q)^{\kappa-|B|} \; (\Lambda_1 - \Lambda_2)(\mathrm dq) = (-1)^{\kappa-|B|} \left( \lambda^{(\kappa)}_{\Lambda_1} - \lambda^{(\kappa)}_{\Lambda_2} \right).$$
Otherwise, if~$|S(\mathbf x) \cap N_x| < \kappa$, the corresponding difference vanishes, since only moments of order strictly smaller than~\( \kappa \) appear. We can not have~$|S(\mathbf x) \cap N_x| > \kappa$ by hypothesis. Summing over the nonempty subsets~$B \subseteq N_x \cap S(\mathbf x)$ we have
$$ (\mathcal L_{\Lambda_1} - \mathcal L_{\Lambda_2})D_\mathbf{x}(\eta) \stackrel{\eqref{gen_Lambda_CCRW}} = \sum_{ \substack{x \in V \\ |S(\mathbf x) \cap N_x| = \kappa } } \sum_{ \substack{ B \subseteq S(\mathbf x) \cap N_x \\ B \neq \varnothing } } (-1)^{\kappa-|B|} \cdot \left( \lambda^{(\kappa)}_{\Lambda_1} - \lambda^{(\kappa)}_{\Lambda_2} \right) \cdot \left( D_\eta(\mathbf x^{B \to x}) - D_\eta(\mathbf x) \right). $$
We now split according to $x$ belonging to~$S(\mathbf x)$ or not.

If~$x \in S(\mathbf x)$, for every~$B \subseteq S(\mathbf x) \cap N_x$, we have~$S(\mathbf x^{B \to x}) = S(\mathbf x) \setminus B$, 
and so
$$D_\eta(\mathbf x^{B \to x}) - D_\eta(\mathbf x) = D_\eta( S(\mathbf x) \setminus N_x ) \cdot \big( D_\eta( (S(\mathbf x) \cap N_x) \setminus B ) - D_\eta( S(\mathbf x) \cap N_x ) \big) $$
Multiplying by~$(-1)^{\kappa - |B|}$ and summing over nonempty~$B \subseteq S(\mathbf x) \cap N_x$, we 
obtain
\begin{align*}
D_\eta( S(\mathbf x) \setminus N_x ) & \cdot \left( \sum_{ \substack{ B \subseteq S(\mathbf x) \cap N_x \\ B \neq \varnothing } } (-1)^{\kappa - |B|} \cdot  D_\eta( (S(\mathbf x) \cap N_x) \setminus B ) - (-1)^{\kappa - |B|}D_\eta( S(\mathbf x) \cap N_x ) \right) \\[2mm]
& = D_\eta( S(\mathbf x) \setminus N_x ) \cdot \sum_{B \subseteq S(\mathbf x) \cap N_x } (-1)^{\kappa - |B|} \cdot  D_\eta( (S(\mathbf x) \cap N_x) \setminus B ) \\[2mm]
& = D_\eta( S(\mathbf x) \setminus N_x ) \cdot D_{1-\eta}( S(\mathbf x) \cap N_x ),
\end{align*}
where the first equality follows from the binomial identity:
$$\sum_{\varnothing \neq B \subseteq S(\mathbf x) \cap N_x} (-1)^{\kappa - |B|} = -(-1)^{\kappa}. $$
Summing over~$x \in S(\mathbf x)$ with~$|S(\mathbf x) \cap N_x| = \kappa$, we get~$I_\mathbf x(\eta)$.

If~$x \notin S(\mathbf x)$, for every~$B \subseteq S(\mathbf x) \cap N_x$, we have~$S(\mathbf x^{B \to x}) = (S(\mathbf x) \setminus B) \cup \{x\}$, and so
$$ D_\eta(\mathbf x^{B \to x}) - D_\eta(\mathbf x) = D_\eta( S(\mathbf x) \setminus N_x ) \cdot \big( \eta(x) D_\eta( (S(\mathbf x) \cap N_x) \setminus B ) - D_\eta( S(\mathbf x) \cap N_x ) \big).$$
Summing over $\emptyset \neq B \subset S(\mathbf{x})\cap N_x$ with
a similar use of the binomial identity gives~$O_\mathbf{x}(\eta)$.
\end{proof}
Notice that~$ I_{\mathbf x}\ge0 $ for every~\(\kappa\), while~$ O_{\mathbf x}\ge0 $ whenever~\(\kappa\) is even. Thus the previous lemma reduces the problem to choosing~\( \mathbf x \) so that the potentially signed term~\(O_{\mathbf x}\) is controlled and at least one of the remaining contributions has strictly positive expectation.

Before applying Lemma 4.2, we record a positivity property of the stationary measures that will be used repeatedly below.
It says that, under the finite collision property, any two distinct sites have positive probability of carrying opposite opinions whenever the harmonic profile is nontrivial. 
\begin{lemma}
Assume that~\(G\) has the finite collision property and let~$\alpha\in\mathscr H\setminus\{0,1\}. $ 
Then, for every pair
\(x,y\in V\) with $x \neq y$,
$$ \mu_{\alpha,\Lambda}(\eta: \eta(x)=1, \eta(y)=0) > 0. $$
\end{lemma}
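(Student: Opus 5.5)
The plan is to compute the two-point function through duality, as in \eqref{eq_moment_rep}, and show that it is strictly smaller than the one-point function. Inclusion--exclusion gives
\[
\mu_{\alpha,\Lambda}(\eta(x)=1,\eta(y)=0)=\widehat\mu_{\alpha,\Lambda}(x)-\widehat\mu_{\alpha,\Lambda}(x,y)=\alpha(x)-\widehat\mu_{\alpha,\Lambda}(x,y).
\]
By duality and the limit in part~1 of Theorem~\ref{thm:classification_general},
\[
\widehat\mu_{\alpha,\Lambda}(x,y)=\lim_{t\to\infty}E_{(x,y)}\widehat\nu_\alpha(Y^1_t,Y^2_t),
\]
where $\widehat\nu_\alpha(u,v)=\alpha(u)\alpha(v)$ if $u\neq v$ and $\widehat\nu_\alpha(u,u)=\alpha(u)$. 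I will split according to whether the two dual particles ever coalesce. On $\{\tau_{\mathrm{coal}}\le t\}$ the integrand equals $\alpha(Y^1_t)$. On $\{\tau_{\mathrm{coal}}>t\}$ it equals $\alpha(Y^1_t)\alpha(Y^2_t)\le\alpha(Y^1_t)$, with equality only when $\alpha(Y^2_t)=1$ or $\alpha(Y^1_t)=0$. Since $\alpha(Y^1_t)$ is a bounded martingale with mean $\alpha(x)$ (marginal simple random walk, Lemma~\ref{lemma:CCRW_properties}), this yields
\[
\alpha(x)-\widehat\mu_{\alpha,\Lambda}(x,y)=\lim_{t\to\infty}E_{(x,y)}\bigl[\alpha(Y^1_t)(1-\alpha(Y^2_t));\tau_{\mathrm{coal}}=\infty\bigr].
\]
I pass from $\{\tau_{\mathrm{coal}}>t\}$ to $\{\tau_{\mathrm{coal}}=\infty\}$ using monotonicity and boundedness. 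The goal is then to show this limit is positive.

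\textbf{Step 1: non-coalescence has positive probability.} By the finite collision property and the coupling of Section~\ref{ss:part1_thm_sm}, I will show $P_{(u,v)}(\tau_{\mathrm{coal}}=\infty)>0$ for some far-apart pair. Lemma~\ref{lemma:main_part3} gives $E g^{(2)}_{\mathrm{break}}(\mathbf X_t)\to 0$, so there exist $u,v$ with $g^{(2)}_{\mathrm{break}}(u,v)<1/2$. From such a pair the CCRW agrees with independent walks forever with probability greater than $1/2$, and in particular never coalesces. Irreducibility of the two-particle CCRW on pairs of distinct sites (for $\Lambda\neq0$ ordinary single jumps occur at rate $\Lambda[0,1]$) transfers this to every starting pair $(x,y)$ with $x\neq y$. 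Indeed, with positive probability the dual first moves to $(u,v)$ without coalescing, and one then applies the strong Markov property.

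\textbf{Step 2: the limiting martingales are nontrivial on non-coalescence.} On the event of Step~1 that the coupling never breaks, $(Y^1,Y^2)=(\widehat X^1,\widehat X^2)$ are genuinely independent walks. I will write $M^i_\infty=\lim_t\alpha(\widehat X^i_t)$, a limit which exists by martingale convergence. The integrand converges to $M^1_\infty(1-M^2_\infty)$, so it suffices to show
\[
E\bigl[M^1_\infty(1-M^2_\infty);\taubreak=\infty\bigr]>0.
\]
Independence gives $E[M^1_\infty(1-M^2_\infty)]=\alpha(u)(1-\alpha(v))$, but we must restrict to $\{\taubreak=\infty\}$. I would handle this by choosing $u,v$ so that $g^{(2)}_{\mathrm{break}}(u,v)$ is small relative to $\alpha(u)(1-\alpha(v))$, which gives the bound
\[
E\bigl[M^1_\infty(1-M^2_\infty);\taubreak=\infty\bigr]\ge \alpha(u)(1-\alpha(v))-g^{(2)}_{\mathrm{break}}(u,v).
\]

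\textbf{Main obstacle.} The hard step is finding such $u,v$ when $\alpha$ is nonconstant and may be close to $0$ or $1$ in various regions. My first attempt will be to sample $(u,v)=(X^1_t,X^2_t)$ with independent walks started from $(x,y)$ and average. By Lemma~\ref{lemma:main_part3}, $E g^{(2)}_{\mathrm{break}}(X^1_t,X^2_t)\to0$. Meanwhile,
\[
E\bigl[\alpha(X^1_t)(1-\alpha(X^2_t))\bigr]=\alpha(x)(1-\alpha(y)),
\]
which is positive if $\alpha(x)>0$ and $\alpha(y)<1$. This holds automatically for every $x$ since $\alpha\notin\{0,1\}$, by the maximum principle on a connected graph: a $[0,1]$-valued harmonic function taking the value $0$ or $1$ anywhere is constant. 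This gives some good pair $(u,v)$. The final step is the transfer from $(u,v)$ back to $(x,y)$ via the positive-probability path of Step~1, applying the strong Markov property at the hitting time of $(u,v)$ for the CCRW.
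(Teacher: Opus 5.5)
\textbf{The gap: the transfer by irreducibility.} The step that fails is the final transfer by irreducibility, and your justification for it (``for $\Lambda\neq0$ ordinary single jumps occur at rate $\Lambda[0,1]$'') is incorrect. Take $\mathbf y=(y_1,y_2)$ with $y_1\neq y_2$ and a vertex $w$ adjacent to both. A lone jump of the first particle to $w$ has rate $r_\Lambda(\mathbf y,w,\{y_1\})=\int_{[0,1]}(1-q)\,\Lambda(\mathrm dq)$, not $\Lambda[0,1]$. This vanishes when $\Lambda$ is a multiple of $\delta_1$.

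In that case off-diagonal irreducibility genuinely fails, and so does the statement. Let $\Lambda=\delta_1$ (a case the paper allows). Let $G$ be the $3$-regular tree with two extra leaves $a,b$ attached to one vertex $c$; this tree has bounded degrees and is transient, hence has the finite collision property. From $(a,b)$ the only transition of the dual is the joint jump of both particles to $c$, so $P_{(a,b)}(\tau_{\mathrm{coal}}=\infty)=0$. In forward terms, every broadcast reaching $a$ also reaches $b$. Hence $\eta_t(a)=\eta_t(b)$ after the first broadcast from $c$, and $\mu_{\alpha,\delta_1}(\eta(a)=1,\eta(b)=0)=0$ for every $\alpha$.

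So this step cannot be closed without an extra hypothesis on $\Lambda$. The paper's proof has the same hole. Its constant case asserts $P_{x,y}(\tau_{\mathrm{coal}}=\infty)>0$ for all $x\neq y$, and its nonconstant case appeals to ``irreducibility of the two-particle dynamics before coalescence''.

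\textbf{Repair, a small detail, and comparison with the paper.} Assume in addition $\int_{[0,1]}(1-q)\,\Lambda(\mathrm dq)>0$. Then every lone jump has rate at least this quantity. Irreducibility off the diagonal reduces to moving two labelled tokens without collision, which is possible on any infinite connected graph that is not a path; a graph with the finite collision property is of this kind. With that, your argument is correct up to one detail. In the averaging step, diagonal points satisfy $\alpha(u)(1-\alpha(u))>0=g^{(2)}_{\mathrm{break}}(u,u)$ trivially. To obtain an off-diagonal good pair, restrict the average to $\{X^1_t\neq X^2_t\}$ and use $P_{(x,y)}(X^1_t=X^2_t)=p_{2t}(x,y)\to0$.

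Your route differs from the paper's. Write $h(u,v):=\mu_{\alpha,\Lambda}(\eta(u)=1,\eta(v)=0)$. For constant $\alpha\equiv p$, the paper uses the identity $h(x,y)=p(1-p)P_{x,y}(\tau_{\mathrm{coal}}=\infty)$. For nonconstant $\alpha$, it uses the antisymmetry $h(x,y)-h(y,x)=\alpha(x)-\alpha(y)$, which yields one positive off-diagonal state without any estimate, and then propagates positivity using harmonicity of $h$ for the two-particle CCRW. Your bound $h(u,v)\ge\alpha(u)(1-\alpha(v))-g^{(2)}_{\mathrm{break}}(u,v)$ is combined with Lemma~\ref{lemma:main_part3} and the strong maximum principle. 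It treats both cases uniformly and produces an explicit good pair, while the paper's trick is shorter in the nonconstant case. Both proofs end with the same propagation step, and that is exactly where the hypothesis on $\Lambda$ is needed.
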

\begin{proof}
Suppose first that~$\alpha\equiv p $ for some \(p\in(0,1)\). By duality,
$$ \mu_{p,\Lambda} \big( \eta(x)=1,\eta(y)=0 \big) = p(1-p) \cdot P_{x,y} \big( \tau_{\mathrm{coal}}^\Lambda=\infty \big). $$
The last probability is strictly positive by the finite collision property and the coupling argument of Section 3. Hence the claim follows.

Suppose now that~\(\alpha\) is nonconstant. Define, for \(x\neq y\),
$$ h(x,y) := \mu_{\alpha,\Lambda} \big( \eta(x)=1,\eta(y)=0 \big). $$
Since the one-site marginals of~\(\mu_{\alpha,\Lambda}\) are given by~\(\alpha\),
$$ h(x,y)-h(y,x) = \alpha(x)-\alpha(y). $$
Thus, since~\(\alpha\) is nonconstant, there exist~\(x_0\neq y_0\) such that~$h(x_0,y_0)>0.$ On the other hand,
$$ h(x,y) = \alpha(x) - \widehat{\mu}_{\alpha,\Lambda}(x,y). $$
By harmonicity of~\(\alpha\), stationarity of~\(\mu_{\alpha,\Lambda}\), and the generator duality,~\(h\) is harmonic for the two-particle~\(\Lambda\)-CCRW 
outside the diagonal. 
Since~\(h\ge0\), the irreducibility of the two-particle dynamics before coalescence implies that positivity at one off-diagonal state propagates to every other one. Therefore~$h(x,y)>0$ for every~$x\neq y$, which proves the claim. 
\end{proof}
The previous result extends to certain finite cylinder events that will appear below, 
as we explain next.
Let $A_1, A_0$ be two disjoint non-empty subsets of $V$ with cardinalities $|A_1|=k_1$, $|A_0| = k_0$.
Consider a system of $k=k_0+k_1$ $\Lambda$-CCRWs with initial configuration $\mathbf{x}=(x_1, \ldots, x_k) \in V^{k}$ 
such that $\{x_1, \ldots, x_{k_1}\} = A_1$ and $\{x_{k_1+1}, \ldots, x_{k}\}= A_0$.
Choose any pair \(z_1,z_0\) of distinct vertices (so that~$\mu_{\alpha,\Lambda} \big(\eta(z_1)=1,\eta(z_0)=0\big)>0$ by the previous lemma) 
and define the event 
$C = \{Y_1^1 = \cdots = Y^{k_1}_1 = z_1, Y^{k_1+1}_1 = \cdots = Y^k_1 = z_0\}$.
Then stationarity, duality, irreducibility and the Markov property give
$$ \mu_{\alpha,\Lambda} \big( \eta=1\text{ on }A_1,\, \eta=0\text{ on }A_0 \big) \ge 
P_{\mathbf{x}}(C)
\cdot \mu_{\alpha,\Lambda} \big( \eta(z_1)=1,\eta(z_0)=0 \big) >0. $$
This will imply that
every finite~\(0/1\)-configuration that occurs in the arguments below has strictly positive
probability under $\mu_{\alpha,\Lambda}$.

\begin{lemma}\label{lemma_FCP_distinct}
Let~$\alpha \in \mathscr H \setminus \{0,1\}$ be a harmonic function, and let~$\Lambda_1,\Lambda_2$ be probability measures on~$[0,1]$ with~$\mathsf m(\Lambda_1) \neq \mathsf m(\Lambda_2)$. Then~$\mu_{\alpha,\Lambda_1} \neq \mu_{\alpha,\Lambda_2}$ in each of the following cases:
\begin{enumerate}
    \item $\kappa$ is even;
    \item $\alpha \equiv p$ for some~$p \in (0,1) \setminus \{ \tfrac{1}{2}\}$;
    \item $\kappa = \Delta$;
    \item $\kappa$ is odd and there exist $v \in V$ and a vector~$\mathbf z \in V^\kappa$ with distinct entries such that
    \begin{equation}\label{property_iso}
S(\mathbf z) \subseteq N_v 
\quad \text{ and } \quad 
|N_u \cap ( S(\mathbf z) \cup \{v\} )|< \kappa \qquad \text{for every } 
       u \notin \left\{ v \right\} \cup S(\mathbf{z}).
    \end{equation}
\end{enumerate}
\end{lemma}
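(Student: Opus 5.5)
In each case the plan is to verify \eqref{e:cond_diff_measures} with a test function $f=D_{\mathbf x}$, for a vector $\mathbf x$ with distinct entries satisfying \eqref{cond_lemma_L12}. By Lemma~\ref{lemma_diffL12} it then suffices to show
\[
\int \big(I_{\mathbf x}+O_{\mathbf x}\big)\,d\mu_{\alpha,\Lambda_1}\neq 0 .
\]
The positivity lemma and its extension to finite cylinder events will be used throughout. Together they guarantee that every finite $0/1$ pattern on distinct sites has strictly positive $\mu_{\alpha,\Lambda_1}$-probability.

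\textbf{Cases 1 and 3.} Throughout we choose $\mathbf x$ with $S(\mathbf x)=N_v\cap B$, where $|N_v\cap B|=\kappa$ for some vertex $v$. This is possible because $\kappa\le\Delta$ and some vertex has degree at least $\kappa$.

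In case 1, take $v$ with $\deg(v)\ge\kappa$ and let $S(\mathbf x)$ be any $\kappa$ neighbours of $v$. Condition \eqref{cond_lemma_L12} holds trivially, since $|S(\mathbf x)|=\kappa$. For $\kappa$ even both $I_{\mathbf x}$ and $O_{\mathbf x}$ are nonnegative. Moreover the summand of $O_{\mathbf x}$ (or $I_{\mathbf x}$) at $v$ dominates the indicator of the pattern $\eta(v)=1$, $\eta\equiv0$ on $S(\mathbf x)$. That pattern has positive probability, so the integral is strictly positive.

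In case 3, take $v$ of degree $\Delta$ and $S(\mathbf x)=N_v$. I expect the sign issue coming from odd $\kappa$ to be handled by a different test function: use instead $f=D_{1-\eta}$ of the same set, or $D_{\mathbf x}\cdot(1-\eta(v))$. The idea is that no vertex other than $v$ can contain all of $N_v$ together with an extra occupied site exceeding $\Delta$ neighbours. I would first try to show that the $(-1)^\kappa$ terms in $O_{\mathbf x}$ can be isolated at vertices $u$ with $N_u=N_v$. They can then be cancelled by combining $D_{\mathbf x}$ and $D_{1-\eta}$ on the same set, using the $0\leftrightarrow1$ symmetry of the generator, which maps $\mu_{\alpha,\Lambda}$ to $\mu_{1-\alpha,\Lambda}$.

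\textbf{Case 4.} Take $\mathbf x=(\mathbf z,v)$, so that $S(\mathbf x)=S(\mathbf z)\cup\{v\}$, which has $\kappa+1$ elements. Condition \eqref{property_iso} ensures that no $u\notin S(\mathbf x)$ has $\kappa$ occupied neighbours, so $O_{\mathbf x}\equiv0$. Every $x\in S(\mathbf x)$ has at most $\kappa$ occupied neighbours, because $|N_u\cap S(\mathbf x)|\le|S(\mathbf x)|-1=\kappa$; hence \eqref{cond_lemma_L12} holds. Finally, $I_{\mathbf x}$ contains the summand at $x=v$, namely $\eta(v)\,D_{1-\eta}(S(\mathbf z))$, and $I_{\mathbf x}$ is nonnegative. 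The integral is therefore strictly positive by cylinder positivity.

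\textbf{Case 2 (main obstacle).} Here $\alpha\equiv p$ and $\kappa$ is odd. I would apply case 1 of the argument with the plain set $S(\mathbf x)\subseteq N_v$, $|S(\mathbf x)|=\kappa$, and compare it with the test function $D_{1-\eta}$ on the same set. The latter is obtained by applying Lemma~\ref{lemma_diffL12} to the dynamics with roles of $0$ and $1$ exchanged, which is again a broadcast voter model, now with stationary measure $\mu_{1-p,\Lambda}$. The two resulting expressions agree except for the sign-carrying term. Via the moment representation \eqref{eq_moment_rep}, the signed term is a polynomial in $p$ of the form $p\,a(p)-(1-p)\,b(p)$, with $a$ and $b$ coming from coalescence probabilities of CCRWs. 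Under the swap $p\mapsto1-p$ it changes roles, so if both integrals vanished we would obtain an identity forcing $p=\tfrac12$. The delicate point I expect is showing that the resulting polynomial identity really pins down $p=\tfrac12$. That requires strict positivity of specific non-coalescence probabilities of the CCRW, which again come from the finite collision property.
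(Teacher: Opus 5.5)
Your cases 1 and 4 are correct and coincide with the paper's argument. The gaps are in cases 3 and 2. There, the proposed pairing of $D_{\mathbf x}$ with the flipped function $D_{1-\eta}(S(\mathbf x))$ carries no information.

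\textbf{Case 3.} With $S(\mathbf x)=N_v$ you have $I_{\mathbf x}\equiv 0$, and every $u$ contributing to $O_{\mathbf x}$ satisfies $N_u=N_v$. So for odd $\kappa$ one gets $O_{\mathbf x}(1-\eta)=-O_{\mathbf x}(\eta)$ pointwise. Since the generators commute with $\eta\mapsto 1-\eta$, the flipped test function returns exactly minus the integral for $D_{\mathbf x}$. Worse, for $\alpha\equiv\frac12$ the measure $\mu_{1/2,\Lambda_1}$ is flip invariant, so $\int O_{\mathbf x}\,d\mu_{1/2,\Lambda_1}=0$: the set $N_v$ cannot work at all. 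Your other candidate, $D_{\mathbf x}(1-\eta(v))=D_\eta(N_v)-D_\eta(N_v\cup\{v\})$, brings in signed $O$-terms that you do not control, coming from the vertices of
\[
C_v=\{u\notin N_v\cup\{v\}:\ \deg(u)=\Delta,\ N_u\subseteq N_v\cup\{v\}\}.
\]
The missing idea is to absorb these vertices into the test set: take $S(\mathbf x)=C_v\cup N_v\cup\{v\}$. Suppose $u\notin S(\mathbf x)$ had $|N_u\cap S(\mathbf x)|=\Delta$. Then $u$ could not neighbour any $w\in C_v$, since that would put $u\in N_w\subseteq N_v\cup\{v\}$. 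Hence $N_u\subseteq N_v\cup\{v\}$ and $u\in C_v$, a contradiction. So $O_{\mathbf x}\equiv0$, and \eqref{cond_lemma_L12} is automatic because $\kappa=\Delta$. The $I_{\mathbf x}$-summand at $v$ equals $D_\eta(C_v\cup\{v\})\,D_{1-\eta}(N_v)$, whose integral is positive by cylinder positivity.

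\textbf{Case 2.} By the same antisymmetry, $F(p):=\int O_{\mathbf x}\,d\mu_{p,\Lambda_1}$ satisfies $F(1-p)=-F(p)$. So ``both integrals vanish'' is a single condition, and the symmetry only yields $F(\frac12)=0$; no identity forces $p=\frac12$. What is needed is a termwise sign. By the moment representation,
\[
F(p)=\sum_{u}E_{(u,\mathbf x)}\Big[\mathds{1}_{E_u}\big(p(1-p)^{N}-(1-p)p^{N}\big)\Big],
\]
summing over $u\notin S(\mathbf x)$ with $S(\mathbf x)\subseteq N_u$. Here $E_u$ is the event that the walk from $u$ never coalesces with the walks from $S(\mathbf x)$, and $N$ is the eventual number of distinct positions among the latter. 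Note that $p(1-p)^N-(1-p)p^N=p(1-p)\big[(1-p)^{N-1}-p^{N-1}\big]$. This vanishes for $N=1$ and has the strict sign of $\frac12-p$ for $N\ge2$. Since $P_{(v,\mathbf x)}(E_v,\,N\ge2)>0$ by the finite collision property, it follows that $F(p)\neq0$ for $p\neq\frac12$. Your $a$ and $b$ are exactly these two expectations; the point is to compare them inside the expectation for each fixed $N$, not through $p\mapsto 1-p$.
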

\begin{proof}
1. For even~$\kappa$, choose~$v$ with~$\deg(v) \ge \kappa$ and~$\mathbf x \in V^\kappa$ with distinct entries such that~$S(\mathbf x) \subseteq N_v$. Since~$|S(\mathbf x)| = \kappa$, the hypothesis~\eqref{cond_lemma_L12} of Lemma~\ref{lemma_diffL12} is automatic. Moreover~$I_\mathbf{x} = 0$, because no vertex belonging to~\(S(\mathbf x)\) can have \(\kappa\) 
neighbors in \(S(\mathbf x)\). 
Then
$$ \int O_\mathbf{x} \; d\mu_{\alpha,\Lambda_1} \ge  \mu_{\alpha,\Lambda_1} \big( \eta(v) \neq \eta(z) \; \forall z \in S(\mathbf x) \big) > 0$$
implying \eqref{e:cond_diff_measures} with $f = D_{\mathbf{x}}(\eta)$ so the conclusion follows.

2. Take~$\mathbf x$ as in the previous case, we then have~$I_\mathbf{x}=0$. 
By the previous case, we can assume that $\kappa$ is odd. 
For every vertex~\(u\notin S(\mathbf x)\) satisfying~$|S(\mathbf x)\cap N_u|=\kappa,$ we necessarily have~$S(\mathbf x) \subseteq N_u$. 
Applying Lemma~\ref{lemma_diffL12},
\begin{align*}
\int O_\mathbf{x} \; d\mu_{p,\Lambda_1} & =  \sum_{ \substack{x \notin S(\mathbf x) \\ |S(\mathbf x) \cap N_x| = \kappa} }  \left( \mu_{p,\Lambda_1}\left( \begin{array}{c}
    \eta(x)=1, \\[1mm]
    \eta(z)=0 \; \forall z\in S(\mathbf x)
\end{array}  \right) - \mu_{p,\Lambda_1}\left( \begin{array}{c}
    \eta(x)=0,  \\[1mm]
    \eta(z)=1 \; \forall z\in S(\mathbf x)
\end{array}  \right) \right). \\[2mm]
& = \sum_{ \substack{x \notin S(\mathbf x) \\ |S(\mathbf x) \cap N_x| = \kappa} } E_{(x,\mathbf x)} \left[ \mathds{1}_{E_x} \cdot \left( p(1-p)^{|\mathbf Y_\infty|} - (1-p)p^{|\mathbf Y_\infty|} \right) \right],
\end{align*}
where~\(E_x\) is the event where $Y^1_t$ never collides with $Y^2_t, \ldots, Y^\kappa_t$,
and~$|\mathbf Y_\infty| = \lim_{t \to \infty} |\{Y^2_t, \ldots, Y^\kappa_t\}|$.
The expression in braces vanishes when~\(|\mathbf Y_\infty|=1\), while for every~\(|\mathbf Y_\infty|\ge2\) it has the strict sign of~$ \frac12-p.$ By the infinite collision property,
$$  P_{(x,\mathbf x)}(E_x,\; |\mathbf Y_\infty| \ge2)>0. $$
Hence~$\int O_\mathbf x \; d\mu_{p,\Lambda_1}$ has the same strict sign,
and we conclude as before.

3. Fix one vertex~$v$ with~$\deg(v)= \Delta$ and define
$$C_v = \{ u \in V \setminus (N_v \cup \{v\}): \deg(u) = \Delta, N_u \subseteq N_v \cup \{v\} \}.$$
The set~\(C_v\) is finite, since it is contained in the ball of radius two around~$v$. 
Let~$\mathbf x$ be a enumeration of~$S = S(\mathbf x) = C_v \cup N_v \cup \{v\}$. We claim that
$$ |S\cap N_u|=\Delta \quad\Longrightarrow\quad u\in S. $$
Indeed, the left-hand side implies~$ \deg(u)=\Delta$ and~$N_u\subseteq S.$ If~\(u\notin S\), then~\(u\) cannot be adjacent to a vertex~\(w\in C_v\), since~\(N_w\subseteq N_v \cup \{v\} \) would imply~\(u\in N_v \cup \{v\} \). Hence
$$ N_u\subseteq N_v \cup \{v\}, $$
and the definition of \(C_v\) would give \(u\in C_v\), a contradiction.

Since \(\kappa=\Delta\), the hypothesis~\eqref{cond_lemma_L12} of Lemma~\ref{lemma_diffL12} holds, and the preceding observation shows that~$O_\mathbf{x} = 0$. Therefore,
\begin{align*}
\int I_\mathbf{x} \; d\mu_{\alpha,\Lambda_1} & \ge \sum_{ \substack{x \in S(\mathbf x) \\ |S(\mathbf x) \cap N_x| = \kappa} } \mu_{\alpha,\Lambda_1}( \eta(w)=1 \; \forall w \in S(\mathbf x) \setminus N_x, \; \eta(z) = 0 \; \forall z \in N_x ) \\[2mm]
& \ge \mu_{\alpha,\Lambda_1}( \eta(w)=1 \; \forall w \in C_v \cup \{v\}, \; \eta(z) = 0 \; \forall z \in N_v )>0. 
\end{align*}

4. We take~$\mathbf{x} = (v,\mathbf z)$, then~$S(\mathbf x) = S(\mathbf z) \cup \{v\}$. 
By the geometric assumption~\eqref{property_iso}, 
all vertices $u \notin S(\mathbf{x})$ satisfy $|N_u \cap S(\mathbf x)| < \kappa$.
Moreover, if $z \in S(\mathbf{z})$ then 
\[
|N_z \cap S(\mathbf{x})| = |\{v\} \cup \big(N_z \cap (S(\mathbf{z}) \setminus \{z\}) \big)| \leq \kappa. 
\]
In particular, the hypothesis~\eqref{cond_lemma_L12} holds and~$O_\mathbf{x} = 0$. 
To conclude, note that
\begin{equation*} 
\int I_\mathbf{x} \; d\mu_{\alpha,\Lambda_1} \ge  \mu_{\alpha,\Lambda_1}( \eta(v)=1, \; \eta(z) = 0 \; \forall z \in S(\mathbf z) )>0. 
\qedhere
\end{equation*}
\end{proof}

The preceding proposition does not give a complete characterization on arbitrary graphs with bounded degrees. In the remaining odd cases, we do not know whether one can always find a local test function~\(f\) such that
$$ \int (\mathcal L_{\Lambda_1} - \mathcal L_{\Lambda_2})f \; d\mu_{\alpha,\Lambda_1}\neq0. $$
More generally, we do not know whether the implication
$$ \mu_{\alpha,\Lambda_1} = \mu_{\alpha,\Lambda_2} \quad\Longrightarrow\quad \mathsf m(\Lambda_1) = \mathsf m(\Lambda_2) $$
holds on every graph with bounded degrees and the finite collision property.
Nevertheless, the geometric condition in~\eqref{property_iso} is satisfied by natural graph classes arising in applications. In particular, on trees and on the nearest neighbor Euclidean lattice~$\Z^d$, the moment profile completely characterizes the dependence of the stationary measures on the broadcasting measure. This is the content of Theorems~\ref{thm:distinctness_trees} and~\ref{thm:distinctness_Zd}, whose proofs we give next.
\begin{proof}[Proof of Theorem~\ref{thm:distinctness_trees}]
By Lemma~\ref{lemm:m_imply_mu}, equality of the moment profiles implies equality of the stationary measures. For the converse, suppose that
$\mathsf{m}(\Lambda_1)\neq\mathsf{m}(\Lambda_2)$,
and let~\(\kappa\) be the first moment at which the two profiles differ. If~\(\kappa\) is even, Lemma~\ref{lemma_FCP_distinct} applies directly. Suppose therefore that~\(\kappa\) is odd. Choose~\(v\in V\) and~$\mathbf x \in V^\kappa$ with distinct entries such that~$S(\mathbf x) \subseteq N_v$. Since there are no cycles in the graph, we have that
$$ |N_u\cap(\{v\}\cup S(\mathbf x) )| \leq 1 <\kappa \qquad\text{for every }u\neq v. $$
The geometric condition~\eqref{property_iso} of Lemma~\ref{lemma_FCP_distinct} is therefore satisfied, and~$ \mu_{\alpha,\Lambda_1} \neq \mu_{\alpha,\Lambda_2}.$ This proves the converse.    
\end{proof}

\begin{proof}[Proof of Theorem~\ref{thm:distinctness_Zd}]
Again, one implication follows from Lemma~\ref{lemm:m_imply_mu}. For the converse, let~\(\kappa\) be the first moment at which the two profiles differ. Here~$\Delta=2d.$ If~\(\kappa\) is even, Lemma~\ref{lemma_FCP_distinct} applies. Suppose that \(\kappa\) is odd. Then necessarily~$\kappa \ge 3.$ Choose~\(v\in\mathbb Z^d\) and~$\mathbf x \in V^\kappa$ with distinct entries such that~$S(\mathbf x) \subseteq N_v$. For every \(u\neq v\),
\[ |N_u\cap(\{v\}\cup S(\mathbf x))|\le 2 < \kappa. \]
Indeed, two distinct nearest neighbors of \(v\) can have a common nearest neighbor different from \(v\), but no vertex different from \(v\) can be adjacent to three distinct nearest neighbors of \(v\). Thus the geometric condition~\eqref{property_iso} in Lemma~\ref{lemma_FCP_distinct} holds, and hence~$ \mu_{p,\Lambda_1} \neq \mu_{p,\Lambda_2}.$
\end{proof}

\subsection{Continuity of the stationary measures with respect to the broadcast measure}\label{ss_continuity}
We conclude this section by studying the continuity of the stationary measures with respect to the broadcasting measure. Recall that multiplication of~\(\Lambda\) by a positive constant only amounts to a linear deterministic time change and therefore does not affect~\(\mu_{\alpha,\Lambda}\). 
Indeed, the moments \eqref{def:moments}, \eqref{def:momentvector} fully characterize $\mu_{\alpha, \Lambda}$, as discussed in Lemma~\ref{lemm:m_imply_mu}.

\begin{proof}[Proof of Theorem~\ref{thm:continuity}]
Since $\{0,1\}^V$ is compact, it suffices to prove that
\begin{equation}\label{final_limit_cylinders}
\lim_{n \to \infty} \widehat{\mu}_{\alpha,\Lambda_n}(\mathbf x) = \widehat{\mu}_{\alpha,\Lambda}(\mathbf x) \quad \text{for all } k \in \N \quad \text{and} \quad \mathbf x \in V^k.
\end{equation}
Fix~$k \in \N$ and~$\mathbf x \in V^k$. By the triangular inequality and~\eqref{ineq_mu_nu_alpha_g} from the proof of Proposition~\ref{prop:main_part3}, for every~$t \ge 0$,
\begin{align*}
|\widehat{\mu}_{\alpha,\Lambda_n}(\mathbf x) - \widehat{\mu}_{\alpha,\Lambda}(\mathbf x)| & \leq 2E_\mathbf{x}g^{{(k,\Lambda_n)}}_\mathrm{break}\left( \mathbf Y^{\Lambda_n}_t \right) + \left| E_\mathbf{x} \widehat{\nu}_\alpha \left( \mathbf Y^{\Lambda_n}_t \right) -  E_\mathbf{x} \widehat{\nu}_\alpha \left( \mathbf Y^{\Lambda}_t \right) \right| + 2E_\mathbf{x}g^{{(k,\Lambda)}}_\mathrm{break}\left( \mathbf Y^{\Lambda}_t \right),    
\end{align*}
where~$(\mathbf Y^{\Lambda_n}_t)_t$ and~$(\mathbf Y^\Lambda_t)_t$ denote the~$k$-particle~$\Lambda_n$-CCRW and the~$\Lambda$-CCRW, respectively. 

We claim that
\begin{equation}\label{eq_unif_limit_g}
\lim_{t \to \infty} \sup_{n \in \N} E_\mathbf{x} g^{(k,\Lambda_n)}_\mathrm{break}(\mathbf Y^{\Lambda_n}_t) = 0,
\end{equation}
This is a uniform version of Lemma~\ref{lemma:lim_g_Y_t}. Following that proof, the uniform limit follows from lower bounding the probability in~\eqref{away_from_0_bd} uniformly over~$n \in \N$. Recall that~$1 = \lambda^{(1)}_{\Lambda_n} \ge \lambda^{(2)}_{\Lambda_n}$ and that the total jump rate of the~$2$-particle~$\Lambda_n$-CCRW is upper-bounded by~$2\Delta$. Then
$$ \inf_{ \substack{u,v \\ d_G(u,v)\leq 2 } } \hspace{-4mm} P_{(u,v)} \left(  \begin{array}{c}
        (Y^{1,\Lambda_n}_t)_t \text{ and } (Y^{2,\Lambda_n}_t)_t \text{ coalesce before they}  \\
        \text{are for first time at graph distance~3 }
    \end{array}  \right) \ge \max \left\{ \frac{\lambda^{(2)}_{\Lambda_n}}{2\Delta},\left( \frac{1 - \lambda^{(2)}_{\Lambda_n}}{2\Delta} \right)^2  \right\}, $$
which corresponds to taking a coordinated coalescing jump or by taking two continuous steps from~$(Y^1_t)_t$ to~$Y^2_0$ and one of them being a standard coalescing jump. Noting that $\max\{a,(1-a)^2\} \ge \tfrac{3-\sqrt{5}}{2} >0$ for every~$a \in [0,1]$, we have a uniform lower bound over~$n \in \N$. As already mentioned, this is enough to obtain~\eqref{eq_unif_limit_g}.

Fix~$\epsilon > 0$. By~\eqref{eq_unif_limit_g} and Lemma~\ref{lemma:lim_g_Y_t}, there exists~$T > 0$ such that
\begin{equation}\label{final_eq_cont}
\left|\widehat{\mu}_{\alpha,\Lambda}(\mathbf x) - \widehat{\mu}_{\alpha,\Lambda_n}(\mathbf x) \right| \leq \epsilon + \left| E_\mathbf{x} \widehat{\nu}_\alpha(\mathbf Y^{\Lambda}_T) - E_\mathbf{x} \widehat{\nu}_\alpha(\mathbf Y^{\Lambda_n}_T) \right| \quad \text{for all }  n \ge 1.
\end{equation}
To bound the last term we couple~$\{ (\mathbf Y^\Lambda_t)_{t \in [0,T]},(\mathbf Y^{\Lambda_n}_t)_{t \in [0,T]}: n \in \N\}$. Let~$\mathbf x \in V^k$ and~$x \in V$, then, for each nonempty~$B \subseteq N_x \cap S(\mathbf x)$,
$$ r_{\Lambda_n}(\mathbf x,x,B) - r_\Lambda(\mathbf x,x,B) \stackrel{\eqref{eq_CCRW_rate}} = \sum^{|N_x \cap S(\mathbf x)|-|B|}_{\ell=0} (-1)^\ell \binom{|N_x \cap S(\mathbf x)|-|B|}{\ell} \cdot \left( \lambda^{(|B|+\ell)}_{\Lambda_n} - \lambda^{(|B|+\ell)}_{\Lambda} \right). $$
Hence, writing~$\delta_n := \max_{ 2 \leq \ell \leq \Delta} \left| \lambda^{(\ell)}_{\Lambda_n} - \lambda^{(\ell)}_{\Lambda} \right|$,
\begin{align*}
\sum_{x \in V} \sum_{ \substack{B \subseteq N_x \cap S(\mathbf x) \\ B \neq \varnothing} } |r_{\Lambda}(\mathbf x,x,B) - r_\Lambda(\mathbf x,x,B)| & \leq \delta_n \cdot \sum_{x \in V} \sum_{ \substack{B \subseteq N_x \cap S(\mathbf x) \\ B \neq \varnothing} } 2^{|N_x \cap S(\mathbf x)|-|B|} \\[2mm]
& =  \delta_n \cdot \hspace{-7mm} \sum_{x \in \bigcup_{ z \in S(\mathbf x)} N_z}  3^{|N_x \cap S(\mathbf x)|} - 2^{|N_x \cap S(\mathbf x)|} \leq \delta_n \cdot 3^k k\Delta.
\end{align*}
Therefore,
$$ \lim_{n \to \infty} \sup_{\mathbf x \in V^k} \sum_{x \in V} \sum_{\varnothing\neq B\subseteq N_x\cap S(\mathbf x)} \left| r_{\Lambda_n}(\mathbf x,x,B)-r_\Lambda(\mathbf x,x,B) \right| = 0. $$
We now use the standard coupling of continuous-time Markov chains. Whenever the two processes are in the same state~\(\mathbf x\), for each possible update~\((x,B)\) we run a common clock at rate
$$ r_{\Lambda_n}(\mathbf x,x,B)\wedge r_\Lambda(\mathbf x,x,B), $$
and independent clocks for the remaining parts of the two rates. Thus, as long as no unmatched clock rings, the two processes evolve together. The total rate of unmatched clocks is bounded by the preceding display, uniformly over~\(\mathbf x\). Consequently, for every fixed \(T>0\),
$$ P_{\mathbf x}\left( \mathbf Y_t^{\Lambda_n}\neq \mathbf Y^\Lambda_t \text{ for some }t\in[0,T] \right) \longrightarrow0 \qquad \text{as } n \to \infty. $$
Taking this to~\eqref{final_eq_cont} and using the fact that~$\widehat{\nu}_\alpha$ is bounded above by~1, we conclude the proof by taking~$n$ to infinity. We have proved~\eqref{final_limit_cylinders}.
\end{proof}

\subsection*{Acknowledgments}
J.A. was partially funded by FAPESP through grants 2023/13453-5 and 2025/02707-1. RSdS was partially funded by CNPq project 401314/2025-1 and FAPEMIG project APQ-00762-26.

\bibliographystyle{alpha}
\bibliography{Ref.bib}

\end{document}